\newcommand{\inserted}[1]{{  #1}} 
\newcommand{\edited}[1]{{ #1}}
\newtheorem{lemma}{Lemma}[section]
\newtheorem{theorem}{Theorem}[section]
\newtheorem{corollary}{Corollary}[theorem]	
\newtheorem{proposition}{Proposition}[section]
\newtheorem{definition}{Definition}[section]
{\theoremstyle{plain}
  \newtheorem{assumption}{Assumption}
  \newtheorem{remark}{Remark}}
\crefname{assumption}{Assumption}{assumptions}
\crefname{remark}{Remark}{remarks}
\newcommand{\matlab}{$\textsc{MATLAB}^{\textrm{\textregistered}}$}
\newcommand{\bfc}{\mathbf{c}}
\newcommand{\bfe}{\mathbf{e}}
\newcommand{\bfA}{\mathbf{A}}
\newcommand{\bfB}{\mathbf{B}}
\newcommand{\bfC}{\mathbf{C}}
\newcommand{\bfE}{\mathbf{E}}
\newcommand{\bfG}{\mathbf{G}}
\newcommand{\bfH}{\mathbf{H}}
\newcommand{\bfL}{\mathbf{L}}
\newcommand{\bfP}{\mathbf{P}}
\newcommand{\bfS}{\mathbf{S}}
\newcommand{\bfU}{\mathbf{U}}
\newcommand{\bfV}{\mathbf{V}}
\newcommand{\bfW}{\mathbf{W}}
\newcommand{\bfX}{\mathbf{X}}
\newcommand{\bfY}{\mathbf{Y}}
\newcommand{\bfZ}{\mathbf{Z}}
\newcommand{\bfSigma}{\mathbf{\Sigma}}
\newcommand{\bfh}{\mathbf{h}}
\newcommand{\wideregparam}{\regparam}
\newcommand{\bfLambda}{\mathbf{\Lambda}}
\newcommand{\bfDelta}{\mathbf{\Delta}}
\newcommand{\bfPhi}{\mathbf{\Phi}}
\newcommand{\bfPsi}{\mathbf{\Psi}}
\title{Learning Spectral Windowing Parameters for Regularization Using Unbiased Predictive Risk and Generalized Cross Validation  Techniques for Multiple Data Sets} 
\author{M. J.  Byrne and R.  A. Renaut}
\begin{document}
\maketitle

\centerline{\scshape Michael J. Byrne}
\medskip
{\footnotesize
 \centerline{School of Mathematical and Statistical Sciences}
   \centerline{Arizona State University}
   \centerline{Tempe, AZ 85287, USA}
}

\medskip

\centerline{\scshape Rosemary A. Renaut$^*$}
\medskip
{\footnotesize
 \centerline{School of Mathematical and Statistical Sciences}
   \centerline{Arizona State University}
   \centerline{Tempe, AZ 85287, USA}
}

\bigskip


\begin{abstract}
During the inversion of discrete linear systems noise in data can be amplified and result in meaningless solutions. To combat this  effect, characteristics of solutions that are considered desirable are mathematically implemented during inversion, which is a process called regularization. The influence of provided prior information  is controlled by non-negative regularization parameter(s). There are a number of methods used to select appropriate regularization parameters, as well as a number of methods used for inversion. New methods of unbiased risk estimation and generalized cross validation are derived for finding spectral windowing regularization parameters. These estimators are extended for finding the regularization parameters when multiple data sets with common system matrices are available. It is demonstrated that spectral windowing  regularization parameters can be learned from these new estimators applied for multiple data and with multiple windows. The results demonstrate that these modified methods, which do not require the use of true data for learning regularization parameters, are effective and efficient, and perform comparably to a learning method based on estimating the parameters using true data. The theoretical developments are validated for the case of two dimensional image deblurring. The results verify that the obtained estimates of spectral windowing regularization parameters can be used effectively on validation data sets that are separate from the training data, \inserted{and do not require known data}. 
\end{abstract}

\section{Introduction} \label{sec:Introduction}
We consider solutions of ill-conditioned linear problems described by 
\begin{align}
\label{eq:Ax = b}
\bfA\xVec \approx \dVec,
\end{align}
where $\bfA \in \mathbb{R}^{\mA \times n}$ with $m \geq n$ and   $\dVec = \bVec + \noiseVec$ is measured, with $\noiseVec$ being a realization of a random vector and $\bVec = \bfA\xTrue$. Even for invertible square matrices  direct matrix inversion when $\bfA$ is ill-conditioned is not recommended due to the noise in the data. Regularization is generally imposed  in which  desired characteristics of a solution are described mathematically and incorporated into the formulation with the aim to produce a more well-posed problem. 

The generalized Tikhonov regularized solution $\xReg$   \cite{Tikh1963} is given by
\begin{align}
\label{eq:TikSol2}
\xVec(\regparam) = \argmin_{\xVec \in \mathbb{R}^n} \left\{\|\bfA\xVec - \dVec\|_2^2 + \regparam^2\|\bfL\xVec\|_2^2\right\}, \quad \regparam > 0, ~ \bfL \in \mathbb{R}^{\mL \times n}.
\end{align}
The scalar $\regparam>0$ is a regularization parameter, and $\bfL$ is a matrix representation of a linear operator. The term $\|\bfL\xVec\|_2^2$ is an example of a penalty function \cite{Vogel:2002}, and $\bfL$ is called the penalty matrix. If $\bfL = \bfeye_n$, the $n \times n$ identity matrix, then regularization via \cref{eq:TikSol2} is called standard or zeroth-order Tikhonov regularization \cite{ABT}. Other standard choices of $\bfL$ include approximations to first and second order derivative operators  \cite{NeumannDCT,Strang1999,Vogel:2002}. 

The quality of $\xReg$ depends on the choice of both $\regparam$ and $\bfL$. There are a number of methods for selecting $\regparam$ when $\bfL$ has been fixed. As an example, the Morozov discrepancy principle ($\mathtt{MDP}$) \cite{Morozov1966}  requires knowledge of the variance of the noise distribution of the data and selects the regularization parameter as the root of a function. The unbiased predictive risk estimator ($\mathtt{UPRE}$) \cite{Mallows1973} also requires knowledge of the variance of the noise distribution of the data and yields the  regularization parameter as the minimizer of a function. The method of  generalized cross validation ($\mathtt{GCV}$) \cite{Wahba1977,Wahba1990}   does not require the noise distribution be known and also yields the  regularization parameter as a minimizer of a function. Some methods do not solve minimization or root-finding problems; for example, the $\mathtt{L-curve}$ method selects a regularization parameter as the value that locates the point of maximum curvature of a function \cite{Hansen1992,HansenOLeary}. As noted, many of these techniques, including the $\mathtt{UPRE}$, $\mathtt{MDP}$, and $\mathtt{GCV}$ methods, have statistical foundations \cite{StatLearning}. 

\inserted{There has been considerable   research in selecting sets of regularization parameters for a pre-selected set of penalty matrices, a process called multi-parameter (MP) regularization \cite{Brezinski2003,ChungEspanol2017,GazzolaNovati2013,LuPereverzev2011,Wood2002}. Approaches to MP regularization using versions of the $\mathtt{L-curve}$ and $\mathtt{MDP}$ methods can be found in \cite{BelgeKilmerMiller2002} and \cite{Wang2012}, respectively. \edited{An} MP $\mathtt{GCV}$ method was also considered in \cite{ModarresiGolub1,ModarresiGolub2}. Windowing,  either in the data domain or the frequency domain, can also be applied to determine multiple regularization parameters; windowing wavelet coefficients was considered in \cite{EasleyLabatePatel,StephanakisKollias}. Examples of windowed regularization in other frequency domains, such as those generated by discrete trigonometric transforms or the singular value decomposition (SVD), have been presented in \cite{ChEaOl:11,ChungKilmerOLeary,KalkeSiltanen}. There is also recent work on learning semi-norms as regularization operators \cite{Holler2020LearningNR}. }

Techniques for utilizing and analyzing multiple data (MD) sets permeate a multitude of scientific fields as diverse as geoscience \cite{GeoscienceML,Zobitz2020EfficientHD}, and   cancer detection \cite{MedicineML}. A comprehensive overview of data-driven approaches to inverse problems can be found in \cite{Arridge2019SolvingIP}, while specific examples of applying multiple data sets for the solution of inverse problems include \cite{Afkham_2021,ChungChungOLeary2011,ChungEspanol2017,HaberTenorio2003,KunischPock2013,TaroudakiOLeary2015,Learning2005}. 

A main contribution of this paper is  demonstrating and validating how the functions associated with the $\mathtt{UPRE}$ and $\mathtt{GCV}$ methods can be modified to handle multiple data sets,  \edited{using both scalar and sets of regularization parameters dependent on spectral windows}. These are chosen as representative methods that either require ($\mathtt{UPRE}$), or do not require ($\mathtt{GCV}$), prior knowledge of the statistics of the noise in the data. There is a distinction between the modified methods and those that would use averages of the parameter selection functions; \inserted{see \cite{Byrne}}. \inserted{Here, the estimators for spectral windowing regularization are derived from first principles. The results show that whereas the $\mathtt{UPRE}$ method easily extends for spectral  windowing regularization,  the $\mathtt{GCV}$ yields an alternative estimator,  as was noted already in \cite{ChEaOl:11}. The result for the $\mathtt{GCV}$ was shown for standard form regularization and is here extended for general form Tikhonov regularization.   The development of these new methods is presented in  \Cref{sec:Methods}, with proofs of main results in \Cref{app:UPRE,app:GCV}.  }\Crefrange{sec:upre}{sec:gcv} pertain to the $\mathtt{UPRE}$ and $\mathtt{GCV}$ methods, respectively.   In addition to developing these modified parameter selection methods, results are  presented regarding their relationship(s) with the original method(s) and the simplified expressions for obtaining the required functions using the generalized singular value decomposition are provided for each function. \inserted{Proofs for the functions and simplified expressions are given in \Crefrange{app:UPRE}{app:DCT}. }The approach using the $\mathtt{UPRE}$ and $\mathtt{GCV}$ estimators is contrasted with a machine learning approach relying on the knowledge of true data in \Cref{sec:Validation}. Numerical results are shown in  \Cref{sec:Validation}.  Conclusions are given in \Cref{sec:Conclusion}. 

\textbf{Significant Contributions:} It is demonstrated through this work how multiple data sets can be used in conjunction with \edited{windowed} regularization using the  formulation introduced in \cite{ChEaOl:11}. \edited{Windowed }parameter versions of the $\mathtt{GCV}$ method for standard Tikhonov regularization were presented in \cite{ChEaOl:11}. \inserted{Here the windowed $\mathtt{GCV}$ is derived from first principles for the generalized Tikhonov regularization. The $\mathtt{UPRE}$ extension is also derived from first principles for standard and generalized Tikhonov regularization.  For non-overlapping spectral windows the  $\mathtt{UPRE}$ estimator is separable in the spectral domain, so that windows can be found for each window independently.  This is not the case for the $\mathtt{GCV}$ estimator. Moreover, the  windowed $\mathtt{UPRE}$ follows immediately from the standard formula for the scalar parameter $\mathtt{UPRE}$ estimate, while the $\mathtt{GCV}$ function is considerably modified if the leave-one-out approach is strictly applied.}  Numerical results for the restoration of $2$D signals\footnote{Results for $1$D problems and all derivations using the $\mathtt{MDP}$ method are given in \cite{Byrne}}, demonstrate that these new windowed regularization parameter estimators can be used for multiple data sets without knowledge of the true data and that their performance competes with a learning approach in which the training stage requires the knowledge of true data. Furthermore, parameters that have been obtained from one set of training images can be used on a separate set of validation (testing) images distinct from the original set, provided that the signal-to-noise ratios are close.  

\subsection{Summary of notation} \label{sec:Notation}
Due to the consideration of multiple data sets as well as the windowed parameter regularization for these data sets, the notation in this paper is extensive and will therefore be briefly summarized. We will use the acronym    ``MD'' in the textual body of the paper and plots to refer to the multi-data generalizations, of these methods and their corresponding functions. \edited{ For example, the MD $\mathtt{UPRE}$ method means the extension of the $\mathtt{UPRE}$ method for multiple data sets, and the notation $\mathtt{win}$ is used to indicate a function that is defined with respect to regularization parameters used for spectral windows.} 
In the formulation, $R$ denotes the number of data sets being considered, which are indexed by $r$. The index $r$ may either be placed as a subscript or a superscript with parenthesis signs; for example, $\bfA^{(r)}$, $\xVec^{(r)}$, and $\dVec^{(r)}$ represent the system matrix, solution, and data, respectively, associated with the $r$th system. The tilde ($\sim$) is placed above matrices, vectors, or functions to denote their use for MD. Specifically, for matrices the tilde indicates the formation of a block diagonal matrix, e.g. $\widetilde{\bfA} = \diag(\bfA^{(1)},\ldots,\bfA^{(R)})$ for $\{\bfA^{(r)}\}_{r=1}^{R}$.   For vectors, the tilde indicates vertical concatenation, e.g. $\widetilde{\dVec}$ is the ordered vertical concatenation of the vectors $\{\dVec^{(r)}\}_{r=1}^{R}$. The functions used in each parameter selection method are indicated using $F$ with subscripts denoting which method is being considered.  For example, $\UBig(\regparam)$ represents the scalar $\mathtt{UPRE}$ function for MD sets. $P_r$ represents the number of windows/parameters being considered for the $r$th data set; the individual windows are indexed by $p$, $p=1:P$.  
For matrices, vectors, and functions being used for windowed regularization, the subscript ``$\mathtt{win}$" is added. Letters are bolded to indicate vectors; the term $\UWinBig(\regparamVec)$ represents the $\mathtt{UPRE}$ function for use in the most general case, where MD are being used for windowed regularization with the parameters contained in the vector $\regparamVec$. $\nullspace(\bfA)$ is used to denote the null space of the matrix $\bfA$. 

\section{Regularized Solutions for Single and Multiple Data Sets}\label{sec:Background}
 We provide first the standard background for determining the solution of the 
 regularized problem \cref{eq:TikSol2} for a given scalar regularization parameter $\regparam$ \cite{ABT,Hansen:98}.  \inserted{We then extend this for the solution of \cref{eq:TikSol2} using  windowing with respect to the spectral domain  \cite{ChEaOl:11}. Finally, we explain how the scalar and windowed solutions provide a formulation for multiple data sets, as introduced in \cite{ChungEspanol2017}. For the windowed solutions we assume that the windows in the spectral domain are specified in advance.  The aim here is to first show that there is a common framework that can be used to express the solution of the single and multiple data sets problems, for both scalar and windowed formulations. Moreover, we present, as needed, the framework using the generalized singular value decomposition \cite{PaigeSau1}, noting that the results simplify for the SVD as is relevant for standard Tikhonov regularization with $\bfL=\bfeye$   \cite{GolubVanLoan2013}.}

\subsection{The Single Parameter Single Data Set Problem}\label{sec:Tiksolution}
Under the full column rank assumption for the augmented matrix that defines the normal equations,  $\nullspace(\bfA) ~ \cap ~ \nullspace(\bfL) = \{\zeroVec\}$, the normal equations solution of \cref{eq:TikSol2} is given by 
\begin{align}\label{eq:normalequationsolution}
\xVec(\regparam) &= \inv{(\trans{\bfA} \bfA + \regparam^2 \trans{\bfL} \bfL)} \trans{\bfA} \dVec 
= \bfA^{\sharp}(\regparam) \dVec, 
\end{align}
where $\bfA^{\sharp}(\regparam)$ is called the \texttt{generalized inverse} matrix \cite{Hansen:98}. 
A compact representation for \cref{eq:normalequationsolution} is obtained using the generalized singular value decomposition \cite{PaigeSau1}. 
\begin{definition}[The Generalized Singular Value Decomposition (GSVD)\label{def.GSVD}] For real matrices $\bfA$ and $\bfL$ of size $\mA \times n$ and $\mL \times n$, respectively, and \edited{assuming the full column rank condition}, 
 the mutual factorizations
\begin{align}
\label{eq:GSVD}
\bfA = \bfU\bfDelta\trans{\bfX}, \quad \text{and} \quad \bfL = \bfV\bfLambda\trans{\bfX},
\end{align}
exist \cite{ABT}. Here  $\bfU$ is an $\mA \times \mA$ orthogonal matrix, $\bfV$ is a $\mL \times \mL$ orthogonal matrix, and $\bfX$ is an $n \times n$ non-singular matrix.  $\bfLambda$ is a \edited{$\mL \times n$ matrix with non-negative diagonal elements in decreasing order on the principal diagonal  $\Lambda_{jj}$, $1\le j\le \min(\mL,n)=q^*$},   and the only elements of the $\mA \times n$ matrix $\bfDelta$ that are possibly non-zero are
\begin{align*}
 0 \leq \Delta_{(1,k+1)} \leq \Delta_{(2,k+2)} \leq \ldots \leq \Delta_{(\min{(m,n)},k+\min{(m,n)})} \leq 1, \quad k = (n-\mA)_+.
\end{align*}
\edited{Here $(x)_+$ is defined to be $x$ for $x>0$ and $0$, otherwise. }
\end{definition}
Equipped with these mutual factorizations, introducing $\bfY$ as the inverse of $\trans{\bfX}$, and setting $\hat{\dVec}=\trans{\bfU}\dVec$, we obtain 
\begin{align}
	\label{eq:GSVD Normal Eq Sol}
	\xVec(\regparam) &= \bfY\inv{\left(\trans{\bfDelta}\bfDelta + \regparam^2 \trans{\bfLambda}\bfLambda\right)}\trans{\bfDelta}\hat{\dVec}.
\end{align}
But now, with the identity $\trans{\bfDelta} = \trans{\bfDelta}\bfDelta\pinv{\bfDelta}$, where $\pinv{\bfA}$ for a matrix $\bfA$ is the pseudoinverse of $\bfA$  \cite{GolubVanLoan2013} 
 and defining $\bfPhi(\regparam) = \inv{\left(\trans{\bfDelta}\bfDelta + \regparam^2 \trans{\bfLambda}\bfLambda\right)}\trans{\bfDelta}\bfDelta$, provides the filter matrix representation of \cref{eq:GSVD Normal Eq Sol} 
\begin{align}
	\label{eq:GSVDfilter}
	\xVec(\regparam) &= \bfY\bfPhi(\regparam) \pinv{\bfDelta}\trans{\bfU}{\dVec}, 
\end{align}
which also defines $\bfA^{\sharp}(\regparam)= \bfY\bfPhi(\regparam) \pinv{\bfDelta}\trans{\bfU}$. 

\inserted{Examining the entries in the diagonal matrix $\bfPhi(\regparam)$, we have
\begin{align}\label{eq:Phientries}
\Phi_{jj} = 
\begin{cases}
0& j=1:\ell \quad  (\delta_j=0) \\
\frac{\delta_j^2}{\delta_j^2+\regparam^2 \lambda_j^2} & j=\ell+1:q^*\\
1 & j = q^*+1:n.
\end{cases}
\end{align}
Here   $\delta_j$ and $\lambda_j$ are defined as follows. 
Let  $\bm{\delta} = \sqrt{\diag(\trans{\bfDelta}\bfDelta)}$, (for the element-wise square root), then $\delta_j=\Delta_{(k+1,k+j)}$, $j=1:q^*$, and we have defined $\delta_j=0$, $j=1:\ell<n$. Likewise, let $\bm{\lambda} = \sqrt{\diag(\trans{\bfLambda}\bfLambda)}$, $\lambda_j=\Lambda_{jj}$, $j=1:q^*$.  Then $\gamma_j=\delta_j/\lambda_j$, $j=1: q^*$ are the generalized singular values.  Notice $\gamma_j=0$ for $\delta_j=0$, and 
due to the opposite ordering of the $\delta_j$ and $\lambda_j$, the $\gamma_j$ are increasing, which is contrary to the standard ordering of the singular values when using the SVD of $\bfA$. In the following discussion we  assume throughout that $m\ge n$, and that $\bfL$ has full rank $q^*$. Therefore $\delta_j=\Delta_{jj}$ and 
$\Phi_{jj} = \frac{\gamma_j^2}{\gamma_j^2+\regparam^2}$, $j=1:n$ is increasing, with potentially $\Phi_{jj}=0$ if $\delta_j=\gamma_j=0$ for $j=1:\ell<n$, corresponding also to  $(\bfPhi(\regparam) \pinv{\bfDelta}\trans{\bfU} \dVec)_j=0$. Note that according to \cref{eq:Phientries} $\Phi_{jj}=1$, $j>q^*$.}  

\subsection{Spectral Windowing for the Single Data Set Problem}\label{sec:Windowed}
A more general approach to regularization replaces the \edited{scalar} regularization parameter by a vector $\regparamVec = \trans{[\regparam_1,\ldots,\regparam_P]}$ \inserted{in which $\regparam_p$ is the regularization parameter for a $p^{\mathtt{th}}$ solution obtained with respect to a $p^{\mathtt{th}}$ spectral window. These windows are defined, following the approach in \cite{ChEaOl:11}, using non-negative weights  which satisfy
\begin{align}
	\label{eq:Weights}
	\sum_{p=1}^{P} \wVec_j^{(p)} = 1, \quad j = 1:n.
\end{align}
We use the index set for a given window $\mathtt{win}^{(p)}$ which is the set of $j$ such that $\wVec_j^{(p)}\ne 0$. The selection of the windows is predefined and is described in \Cref{sec:windows}. 
The diagonal matrices $\bfW^{(p)} = \diag\left(\wVec^{(p)}\right)$,   satisfying $\sum_{p=1}^P \bfW^{(p)} = \bfeye_n$, introduced into \cref{eq:GSVDfilter} yield the windowed solution which is the sum over all $P$ windows
\begin{align}
	\label{eq:Windowed GSVD Solution}
    \xWin(\regparamVec) = \sum_{p=1}^P \bfY \edited{(\bfW^{(p)})^{1/2}}\bfPhi(\regparam_p) (\bfW^{(p)})^{1/2}\pinv{\bfDelta}\trans{\bfU} \dVec =  \bfY \bfPhi_{\mathtt{win}}(\regparamVec)  \pinv{\bfDelta}\trans{\bfU} \dVec.
\end{align}
This defines the symmetric windowed filter matrix and \texttt{windowed generalized inverse} 
\begin{align}\label{eq:winfilter}
    \bfPhi_{\mathtt{win}}(\regparamVec)&=\sum_{p=1}^P(\bfW^{(p)})^{1/2} \bfPhi(\regparam_p)(\bfW^{(p)})^{1/2} = \sum_{p=1}^P \bfPhi_{\mathtt{win}^{(p)}}(\regparam_p)\\ \label{eq:wingeninv}
    \bfA^\sharp_{\mathtt{win}}(\regparamVec)&= \bfY \bfPhi_{\mathtt{win}}(\regparamVec)  \pinv{\bfDelta}\trans{\bfU}. 
\end{align}
Furthermore, we may also write the solution as a sum over solutions for each window
\begin{align}
	\label{eq:Windowed GSVD Solutionsum}
    \xWin(\regparamVec) = \sum_{p=1}^P \bfY \bfPhi_{\mathtt{win}^{(p)}}(\regparam_p)\pinv{\bfDelta}\trans{\bfU} \dVec = \sum_{p=1}^P \bfA^\sharp_{\mathtt{win}^{(p)}}(\regparam_p)\dVec =\sum_{p=1}^P \xVec^{(p)}(\regparam_p).
\end{align}
Defining 
\begin{align}\label{eq:sepxp}\xVec^{(p)}(\regparam_p) = \bfY \bfPhi(\regparam_p)\pinv{\bfDelta} \bfW^{(p)} \hat{\dVec},
\end{align} 
 we notice that $\xVec^{(p)}(\regparam_p)$ depends only on  $\hat{d}_j$ for $j$ in $\mathtt{win}^{(p)}$  
when the windows are non-overlapping $\wVec_j^{(p)}=1$ for $j\in \mathtt{win}^{(p)}$  and zero otherwise. 

}

\subsection{Multiple data sets} \label{sec:Multiple data sets}
\inserted{Suppose now that we have a collection} of data sets $\{\dVec^{(r)}\}_{r=1}^R$ where, for each data set
\begin{align}
	\label{eq:Big vectors}
	\dVec^{(r)} \approx {\bfA^{(r)}}\xVec^{(r)},
\end{align}
analogously to \cref{eq:Ax = b}, $\bfA^{(r)} \in \mathbb{R}^{m_r \times n_r}$ with $m_r \geq n_r$,   $\dVec^{(r)} = \bVec^{(r)} + \noiseVec^{(r)}$, with $\noiseVec^{(r)}$ being a realization of a random vector and $\bVec^{(r)} = \bfA^{(r)}\xTrue^{(r)}$. 
For given regularization parameters $\regparam^{(r)}$ and penalty matrices $\bfL^{(r)}$ of dimension $q_r \times n_r$, Tikhonov regularization can be performed to produce regularized solutions $\xVec(\regparam^{(r)})$ that minimize the \edited{Tkhonov functionals, $\|\bfA^{(r)}\xVec - \dVec^{(r)}\|_2^2 + \left(\regparam^{(r)}\right)^2\|\bfL^{(r)}\xVec\|_2^2$, which is equivalent to  solving \cref{eq:TikSol2} but for each system independently.} 

\subsubsection{Scalar Parameter for Multiple Data Sets}\label{sec:SPMD}
\inserted{Suppose  each of the systems are \textit{similar} under some assumptions to be defined as needed, then it may be reasonable to replace $\regparam^{(r)}$ by a common scalar $\alpha$ and seek to find a suitable $\alpha$ for all data sets which can then be used to find solutions for other data sets with \textit{similar} properties. In this framework we consider the determination of 
\begin{align} 
\widetilde{\xVec}(\regparam) = \argmin_{\xVec \in \mathbb{R}^{N}}\left\{\|\ABig\widetilde{\xVec} - \widetilde{\dVec}\|_2^2 + {\wideregparam}^2\|\widetilde{\bfL}\widetilde{\xVec}\|_2^2\right\},\label{eq:Big functional}
	\end{align}
for given scalar $\wideregparam$, and where we define $N=\sum_{r=1}^R n_r$. Here, with the notation introduced in~\Cref{sec:Notation}, $\widetilde{\xVec}$ and $\dBig$ are the vectors formed by vertically concatenating $\{\widetilde{\xVec}\}_{r=1}^R$ and $\{\dVec^{(r)}\}_{r=1}^R$, respectively, and matrices $\ABig$ and $\widetilde{\bfL}$ are the block diagonal matrices generated from $\bfA^{(r)}$ and $\bfL^{(r)}$. }The advantage of regularizing via \cref{eq:Big functional} is that we only have to select one parameter, $\wideregparam$, instead of $R$ parameters (one for each data set). \inserted{The disadvantage is that we now have to solve a far larger system of equations, and therefore it will be necessary to identify what it means to be \textit{similar}. 

\inserted{First,  we observe that we can immediately write down the solution of \cref{eq:Big functional} using the framework in \Cref{sec:Tiksolution}, yielding the solution equivalent to \cref{eq:normalequationsolution} for the large system of equations}
\begin{align}\label{eq:normalequationsolutionbig}
\widetilde{\xVec}(\regparam) &= \inv{(\trans{\ABig} \ABig + \wideregparam^2 \trans{\widetilde{\bfL}} \widetilde{\bfL})} \trans{\ABig} \dBig 
= {\ABig}^{\sharp}(\wideregparam) \dBig.  
\end{align}
This is a completely separable block diagonal solution, common only through $\wideregparam$. As for the single data set case, it is convenient to introduce the assumption that there is a GSVD for each system, as given in \Cref{Assumption_Decomposition}. }

\begin{assumption}[\inserted{GSVD for matrix pair $\bfA^{(r)}$, $\bfL^{(r)}$} \label{Assumption_Decomposition}]
We assume that there exist matrices $\bfDelta^{(r)} \in \mathbb{R}^{\mA_r \times n_r}$ and $\mathbf{\bfLambda}^{(r)} \in \mathbb{R}^{\mL_r \times n_r}$ such that
$\bfA^{(r)} = \bfU^{(r)}\bfDelta^{(r)}\trans{(\bfX^{(r)})}$ and $\bfL^{(r)} = \bfV^{(r)}\bfLambda^{(r)}\trans{(\bfX^{(r)})}$
for $r = 1:R$, where $\bfU^{(r)}$ and $\bfV^{(r)}$ are orthogonal \inserted{, $\bfX^{(r)}$ is invertible and $\nullspace(\bfA^{(r)}) ~ \cap ~ \nullspace(\bfL^{(r)}) = \{\zeroVec\}$.} 
\end{assumption}
\inserted{So far this imposes no \textit{similarity} between systems but allows the solution to be written in the filtered form, equivalent to \cref{eq:GSVDfilter}, 
\begin{align}
	\label{eq:GSVDfilterbig}
	\widetilde{\xVec}(\wideregparam) &= \widetilde{\bfY}\widetilde{\bfPhi}(\wideregparam) \pinv{\widetilde{\bfDelta}}\trans{\widetilde{\bfU}} \dBig = {\ABig}^{\sharp}(\wideregparam)\dBig, 
\end{align}
Note this also defines ${\ABig}^{\sharp}(\wideregparam)= \widetilde{\bfY}\widetilde{\bfPhi}(\wideregparam) \pinv{\widetilde{\bfDelta}}\trans{\widetilde{\bfU}}$.  
Again the notation from \Cref{sec:Notation} is applied.  \inserted{This extends the result in \cite[Eq. 3.14]{ChungEspanol2017} to the case when $\bfDelta$ need not be invertible.} Strictly speaking we do not need to use the GSVD for these solutions, but it is the use of the spectral information provided by the GSVD, or SVD when the operator $\bfL$ is replaced by the identity,  that  facilitates in the context of our analysis the definition of a windowed solution dependent on the generalized singular values, respectively singular values if $\bfL=\bfeye$.  We observe that $\widetilde{\bfPhi}$ is symmetric  block diagonal with diagonal blocks. 
We reiterate that \cref{eq:normalequationsolutionbig,eq:GSVDfilterbig} are no more than \cref{eq:normalequationsolution,eq:GSVDfilter}, respectively, simply applied for the large system of equations with no assumptions about any common system information.}
\subsubsection{Windowing for Multiple Data Sets}\label{sec:WinMD}
Having provided the spectral windowing for the single  data set, it is now immediate that we could apply windowing to each of the $r$ data sets. Let $\regparamVec^{(r)} = [\regparam^{(r)}_1,\regparam^{(r)}_2,\ldots,\regparam^{(r)}_{P_r}]$ be the $P_r$ regularization parameters used for windowed regularization applied to the $r$th system described by \cref{eq:Big vectors}. Then, as in \cref{eq:Windowed GSVD Solution}, the  independently constructed regularized solutions yield the  filter representation for each solution
\begin{align*}
\xWin^{(r)}(\regparamVec^{(r)})  =  \bfY^{(r)} {\bfPhi}^{(r)}_{\mathtt{win}}(\regparamVec^{(r)})  \pinv{{(\bfDelta^{(r)})}}\trans{(\bfU^{(r)})} \dVec^{(r)}.
\end{align*}    
\inserted{ Here ${\bfPhi}^{(r)}_{\mathtt{win}}$ incorporates the independent window weighting matrices as in \cref{eq:winfilter} and is again symmetric. As in \cref{eq:wingeninv} we could define the appropriate  generalized inverse matrices for each system. 
If each system has its own set of windows,  there are a total of $\sum_{r=1}^{R} P_r$ regularization parameters. Instead we introduce additional assumptions. 
\begin{assumption}[Windows and $\regparamVec$\label{ass:window}]
We assume 
\begin{enumerate}
\item the number of windows  \edited{$P_r$} is the same for each system, i.e $P_r = P$ for all $r = 1:R$ and  $\regparamVec^{(r)}\in \mathbb{R}^P$, $r=1:R$. 
\item $\regparamVec^{(r)}=\regparamVec $, $r=1:R$.
\end{enumerate}
\end{assumption}
By the first statement of \Cref{ass:window} there are $RP$ parameters. \inserted{Now as for the scalar case, the solution of the problems defined for these concatenated systems is equivalent to solving each independently. But the second statement of \Cref{ass:window} reduces the number of unknowns to $P$ and introduces taking advantage of multiple systems to find an optimal vector  $\regparamVec$. 
Note, this still does not imply that the windows need to be the same.  In summary we have  the standard filtered form for the concatenated solution, as in \cref{eq:Windowed GSVD Solution}, 
\begin{align}\label{eq:Bigwinsoln}
{\xBig}_\mathtt{win}(\regparamVec)  =  \widetilde{\bfY} \widetilde{\bfPhi}_{\mathtt{win}}(\regparamVec)  \pinv{\widetilde{\bfDelta}}\trans{\widetilde{\bfU}} \dBig = \widetilde{\bfA}^\sharp_{\mathtt{win}}\dBig,
\end{align} 
in which the diagonal weighting matrices are hidden within $\widetilde{\bfPhi}_{\mathtt{win}}$, and $\widetilde{\bfA}^\sharp_{\mathtt{win}}$ is defined as in \cref{eq:wingeninv}.}}

\section{Parameter selection methods}\label{sec:Methods}
An estimate of scalar $\regparam$ can be found by applying many different criteria for defining what it means for $\regparam$ to be \emph{optimal}, and there are numerous descriptions in the literature \cite{ABT,Hansen:98,Vogel:2002}.  Here, to find $\regparam$  we focus on the method of generalized cross validation ($\mathtt{GCV}$), \cite{Wahba1977,Wahba1990} and the unbiased predictive risk estimator ($\mathtt{UPRE}$), \cite{Mallows1973,Vogel:2002}. While both techniques are statistically based, the $\mathtt{GCV}$ does not rely on any knowledge of the statistical distribution for  $\noiseVec$, but for the $\mathtt{UPRE}$ we need to assume $\bfSigma^{(r)}$, \edited{the covariance matrix for the noise in sample $r$}, is available. For \cref{eq:TikSol2} these methods require the minimization of an objective function  that depends on the scalar $\regparam$. We present the standard $\mathtt{UPRE}$ and $\mathtt{GCV}$ functions to find $\regparam$ for $R=1$ and then derive the extensions of the $\mathtt{UPRE}$ and $\mathtt{GCV}$ functions for the determination of the \textit{optimal}  $\regparamVec$ for the windowed MD  solution \cref{eq:Bigwinsoln}.  The proofs of the main results are provided in \Cref{app:UPRE,app:GCV}, and the derivations which use the GSVD to simplify the terms that are used to calculate the underlying functions to be minimized dependent on $\regparamVec$ are given in \Cref{app:algebra}.

\subsection{The Unbiased Predictive Risk Estimator}\label{sec:upre}
The $\mathtt{UPRE}$ method was developed in 1973 by Mallows and considers the statistical relationship between the regularized residual $\rReg = \bfA\xReg - \dVec$,
 and the predictive error $\pVec(\regparam) = \bfA(\xReg - \xTrue)$. Assuming $\bfSigma=\sigma^2 \bfeye_m$, the  standard $\mathtt{UPRE}$ objective function for Tikhonov regularization  is given by
\begin{align}
	\label{eq:UPRE}
	\U(\regparam) = \frac{1}{\mA}\|\rVec(\regparam)\|_2^2 + \frac{2\noiseSD^2}{\mA}\trace(\A) - \noiseSD^2 
\end{align}
\cite[p.~98]{Vogel:2002} and the optimal $\regparam$ is defined by  
\begin{align}\label{eq:optscalarUPRE}
\regparam_{\mathtt{UPRE}} = \argmin_{\regparam > 0}  \U(\regparam).
\end{align}
In \cref{eq:UPRE} $\A$  is the data resolution matrix 
\begin{align}
    \label{eq:Influence matrix}
    \A = \bfA\inv{\left(\trans{\bfA}\bfA + \regparam^2\trans{\bfL}\bfL\right)}\trans{\bfA} = \bfA \bfA^{\sharp}(\regparam), 
\end{align} 
and $\U(\regparam)$ is an unbiased estimator of the expected value of the predictive risk, $\frac{1}{\mA}\|\pVec(\regparam)\|_2^2$. The derivation of the $\mathtt{UPRE}$ function for the most general case with $P$ windows and $R$ data sets follows the details for the derivation of \cref{eq:UPRE} as given in \cite[p.~98]{Vogel:2002}, under the additional assumption on the data sets that $\noiseVec^{(r)}$, $r=1:R$, are mutually independent. We collect the required assumptions in \Cref{Assumption_System}.
\begin{assumption}[\inserted{The Data Sets}\label{Assumption_System}]
For $r = 1:R$, assume that $\bVec^{(r)} = {\bfA^{(r)}}\xVec^{(r)}$, $\dVec^{(r)} = \bVec^{(r)} + \noiseVec^{(r)}$, and $\noiseVec^{(r)} \sim \mathcal{N}(\zeroVec^{(r)},\bfSigma^{(r)})$ with mutually independent $\noiseVec^{(r)}$. Equivalently, $\noiseVec^{(r)}$ follows a  Gaussian distribution with mean $\zeroVec$ and symmetric positive definite covariance matrix $\bfSigma^{(r)}$.  The vectors $\bVec^{(r)}$, $\dVec^{(r)}$, and $\noiseVec^{(r)}$ are of length $\mA_r$ and $\xVec^{(r)}$ is of length $n_r$.
\end{assumption}
\begin{theorem}[$\mathtt{UPRE}$ function for multiple data sets and multiple windows\label{thm:Main UPRE Result}]
Under \Cref{ass:window,Assumption_System},  
and assuming that each $\AWin^{(r)}(\regparamVec)=\bfA^{(r)}(\AWin^{(r)})^\sharp$, $r=1:R$, is symmetric, then the $\mathtt{UPRE}$ function $\UWinBig(\regparamVec)$ for the data sets $\{\dVec^{(r)}\}_{r=1}^R$ with windows $\{\{\bfW^{(r,p)}\}_{p=1}^P\}_{r=1}^R$ is
\begin{align}
	\label{eq:Big UPRE}
	\UWinBig(\regparamVecBig) = \frac{1}{M} \sum_{r=1}^R m_r\UWin^{(r)}(\regparamVecBig),
\end{align}
where
\begin{align}
	\label{eq:Individual UPRE}
	\UWin^{(r)}(\regparamVec) = \frac{1}{m_r}\|\rWin^{(r)}(\regparamVec)\|_2^2 + \frac{2}{m_r} \trace(\bfSigma^{(r)} \AWin^{(r)}(\regparamVec)) - \frac{1}{m_r} \trace(\bfSigma^{(r)}), 
\end{align}
and  $M = \sum_{r=1}^{R} m_r$.
\end{theorem}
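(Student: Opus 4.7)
The plan is to mirror the classical UPRE derivation for a single data set with scalar parameter, as in Vogel, and then exploit the block diagonal structure induced by the multiple-data formulation together with the mutual independence of the noise realizations assumed in \Cref{Assumption_System}.

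First, I would define the combined predictive error $\widetilde{\pVec}_{\mathtt{win}}(\regparamVec) = \ABig \xWinBig(\regparamVec) - \bBig$ and the combined residual $\rWinBig(\regparamVec) = \ABig\xWinBig(\regparamVec) - \dBig$. Because $\dBig = \bBig + \noiseVecBig$, the identity $\widetilde{\pVec}_{\mathtt{win}} = \rWinBig + \noiseVecBig$ holds without reference to the specific form of the solution. Substituting $\xWinBig(\regparamVec) = \widetilde{\bfA}^\sharp_{\mathtt{win}}(\regparamVec)\,\dBig$ from \cref{eq:Bigwinsoln} gives $\rWinBig = (\AWinBig - \bfeye_M)(\bBig + \noiseVecBig)$. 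Squaring $\widetilde{\pVec}_{\mathtt{win}}$ and taking expectations yields
\begin{equation*}
\E\bigl[\|\widetilde{\pVec}_{\mathtt{win}}\|_2^2\bigr] = \E\bigl[\|\rWinBig\|_2^2\bigr] + 2\,\E\bigl[\trans{\rWinBig}\,\noiseVecBig\bigr] + \E\bigl[\|\noiseVecBig\|_2^2\bigr].
\end{equation*}

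Second, I would compute the cross and quadratic terms. The linear-in-noise piece of $\trans{\rWinBig}\noiseVecBig$ has mean zero since $\E[\noiseVecBig] = \zeroVec$. For the quadratic piece, let $\widetilde{\bfSigma} = \diag(\bfSigma^{(1)},\ldots,\bfSigma^{(R)})$, which is the covariance of $\noiseVecBig$ by the mutual independence in \Cref{Assumption_System}. Then the standard identity $\E[\trans{\noiseVecBig}(\AWinBig - \bfeye_M)\noiseVecBig] = \trace((\AWinBig - \bfeye_M)\widetilde{\bfSigma})$ applies, and $\E[\|\noiseVecBig\|_2^2] = \trace(\widetilde{\bfSigma})$. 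The hypothesis that each $\AWin^{(r)}$ is symmetric ensures $\AWinBig$ is symmetric, so the quadratic form and its transpose coincide in the trace. Collecting terms produces the unbiased estimator $\|\rWinBig\|_2^2 + 2\trace(\AWinBig \widetilde{\bfSigma}) - \trace(\widetilde{\bfSigma})$ of $\E[\|\widetilde{\pVec}_{\mathtt{win}}\|_2^2]$; dividing by $M$ defines $\UWinBig(\regparamVec)$.

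Third, I would exploit the block diagonal structure to split each term across data sets. Since $\ABig$ and $\widetilde{\bfL}$, and hence $\widetilde{\bfA}^\sharp_{\mathtt{win}}$ and $\AWinBig$, are block diagonal with $r$-th blocks $\bfA^{(r)}$, $\bfL^{(r)}$, $(\bfA^{(r)})^\sharp_{\mathtt{win}}$, and $\AWin^{(r)}$, and since $\dBig$, $\noiseVecBig$, $\bBig$ are vertical concatenations, windowing acts independently inside each block and $\rWinBig$ stacks the per-system residuals $\rWin^{(r)}$. Therefore $\|\rWinBig\|_2^2 = \sum_r \|\rWin^{(r)}\|_2^2$, $\trace(\widetilde{\bfSigma}\AWinBig) = \sum_r \trace(\bfSigma^{(r)}\AWin^{(r)})$, and $\trace(\widetilde{\bfSigma}) = \sum_r \trace(\bfSigma^{(r)})$. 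Substituting these identities and factoring $m_r$ from the $r$-th bracket reproduces $\UWinBig(\regparamVec) = M^{-1}\sum_{r=1}^R m_r \UWin^{(r)}(\regparamVec)$ with $\UWin^{(r)}$ as in \cref{eq:Individual UPRE}. The main technical point is checking that the residual/predictive-error identity survives the windowed formulation and recording where the symmetry hypothesis enters the trace manipulation; thereafter the block-diagonal bookkeeping is routine.
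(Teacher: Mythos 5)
Your proof is correct and follows essentially the same route as the paper's: both reduce the claim to the classical single-system $\mathtt{UPRE}$ identity via the Gaussian quadratic-form trace identity (with the symmetry hypothesis entering exactly where you place it) and then decouple the block-diagonal concatenated system into the sum over $r$. The only, harmless, difference is organizational: you expand $\widetilde{\pVec}_{\mathtt{win}} = \rWinBig + \noiseVecBig$ and evaluate the cross term directly, whereas the paper applies the Trace Lemma separately to the residual and the predictive error (each written as a deterministic part plus a linear-in-noise part) and subtracts; both land on the same estimator before the block decomposition.
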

\begin{proof}The proof is given in \Cref{app:UPRE}. \end{proof}
In \Cref{thm:Main UPRE Result} there is no assumption made that the windows used are the same, nor that the systems are of the same size, namely, we do not assume $m_r=m$, $r=1:R$. Moreover, the results are given in terms of the generalized inverse matrix trace terms and the residuals for  each system, without any use of the GSVD for the matrix pairs $\bfA^{(r)}$ and $\bfL^{(r)}$.

 It is immediate that \cref{eq:Big UPRE} does not align with \cref{eq:UPRE}, in particular,   each of the functions for system $r$ are more general than \cref{eq:UPRE}. To relate the two expressions we assume either $\bfSigma^{(r)}=\sigma_r^2 \bfeye_{m_r}$, where $\sigma_r^2$ is the common variance in the noise, or that the Gaussian noise is whitened by applying the whitening, or zero-phase component analysis (ZCA)  \cite{BellSejnowski}, transformation $(\bfSigma^{(r)})^{-1/2}$ to \cref{eq:Big vectors} for each $r$. 
 \begin{assumption}[Mutually Independent Whitened Data \label{ass:whitenoise}]
We assume $\bfSigma^{(r)}=\sigma_r^2 \bfeye_{m_r}$, $r=1:R$. If the  ZCA  transform is applied to whiten the $r^{\mathtt{th}}$ data set we have $\sigma_r^2=1$
 \end{assumption}
\begin{corollary}[$\mathtt{UPRE}$ for mutually independent data\label{cor:Main UPRE Result}]
Under \Cref{ass:whitenoise} 
\begin{align}
	\label{eq:Individual UPREwhite}
	\UWin^{(r)}(\regparamVec) = \frac{1}{m_r}\|\rWin^{(r)}(\regparamVec)\|_2^2 + \frac{2}{m_r} \sigma_r^2\trace( \AWin^{(r)}\regparamVec)) -  \sigma_r^2.
\end{align}
\end{corollary}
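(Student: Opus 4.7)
The plan is to derive \cref{eq:Individual UPREwhite} as a direct specialization of \cref{eq:Individual UPRE} from \Cref{thm:Main UPRE Result}. Since \Cref{ass:whitenoise} is strictly stronger than \Cref{Assumption_System} (it pins down the covariance to be a scaled identity), the hypotheses of \Cref{thm:Main UPRE Result} still hold, and so the general expression for $\UWin^{(r)}(\regparamVec)$ remains valid. My task is therefore only to simplify the two trace terms in \cref{eq:Individual UPRE} under the substitution $\bfSigma^{(r)} = \sigma_r^2 \bfeye_{m_r}$.

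First, I would simplify $\trace(\bfSigma^{(r)} \AWin^{(r)}(\regparamVec))$ by pulling the scalar $\sigma_r^2$ out of the trace, using $\bfSigma^{(r)} \AWin^{(r)}(\regparamVec) = \sigma_r^2 \AWin^{(r)}(\regparamVec)$ and the linearity of the trace, to obtain $\sigma_r^2 \trace(\AWin^{(r)}(\regparamVec))$. Next I would compute $\trace(\bfSigma^{(r)}) = \trace(\sigma_r^2 \bfeye_{m_r}) = m_r \sigma_r^2$. Substituting both simplifications into \cref{eq:Individual UPRE} gives
\begin{align*}
\UWin^{(r)}(\regparamVec) &= \frac{1}{m_r}\|\rWin^{(r)}(\regparamVec)\|_2^2 + \frac{2}{m_r}\sigma_r^2 \trace(\AWin^{(r)}(\regparamVec)) - \frac{1}{m_r}\, m_r \sigma_r^2,
\end{align*}
which collapses the last term to $-\sigma_r^2$ and yields \cref{eq:Individual UPREwhite}.

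There is no real obstacle to this argument; the corollary is essentially a bookkeeping exercise that records the familiar scalar-variance form of the $\mathtt{UPRE}$ function, analogous to \cref{eq:UPRE}, in the MD windowed setting. The only point worth flagging is to note explicitly that the ZCA pre-whitening transformation referenced in \Cref{ass:whitenoise} reduces the general covariance case to the $\sigma_r^2 = 1$ special case of the formula just derived, so that \cref{eq:Individual UPREwhite} further simplifies to $\UWin^{(r)}(\regparamVec) = (1/m_r)\|\rWin^{(r)}(\regparamVec)\|_2^2 + (2/m_r)\trace(\AWin^{(r)}(\regparamVec)) - 1$ after whitening. This observation underscores that no loss of generality is incurred by assuming scalar-variance noise, and it justifies using \cref{eq:Individual UPREwhite} as the working form of the per-system $\mathtt{UPRE}$ function in the subsequent sections.
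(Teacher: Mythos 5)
Your proposal is correct and is exactly the paper's argument: the paper's proof simply says the result is immediate upon substituting $\bfSigma^{(r)}=\sigma_r^2\bfeye_{m_r}$ into \cref{eq:Individual UPRE}, which is the substitution and simplification you carry out explicitly. Your added remark about the ZCA case reducing to $\sigma_r^2=1$ is consistent with \Cref{ass:whitenoise} and harmless.
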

\begin{proof}This is immediate by using $\bfSigma^{(r)}=\sigma_r^2 \bfeye_{m_r} $ in \cref{eq:Individual UPRE}. \end{proof}
Finally, if we have systems that are all of the same size, $m_r=m$, the fraction $m_r/M$ simplifies as $1/R$ and we obtain 
\begin{align}
	\label{eq:Big UPRE same system}
	\UWinBig(\regparamVec) = \frac{1}{R} \sum_{r=1}^R \UWin^{(r)}(\regparamVec).
\end{align}

\begin{remark}[Comment on symmetry for the data resolution matrices\label{remark:symm}] It is clear from the proof given in \Cref{app:UPRE} that it is not necessary to assume symmetry in order to evaluate the estimator, this is only needed to combine the two terms. To be more general we obtained the first steps of the proof without the symmetry requirement. Yet, we can also obtain symmetry without using a mutual spectral decomposition for $\bfA$ and $\bfL$. Indeed, returning to the normal equations form \cref{eq:normalequationsolution} for the solution, without windowing, we can introduce the diagonal matrices $\bfW^{(p)}$ directly to define a $p^{\mathrm{th}}$ solution dependent on $\bfW^{(p)}$ and a scalar regularization parameter $\regparam_p$, 
\begin{align}\label{eq:normalequationsolutionwin}
\xVec^{(p)}(\regparam_p) &= (\bfW^{(p)})^{1/2}\inv{(\trans{\bfA} \bfA + \regparam_p^2 \trans{\bfL} \bfL)}(\bfW^{(p)})^{1/2} \trans{\bfA} \dVec 
= \bfA_p^{\sharp}(\regparam_p) \dVec.  
\end{align}
The data resolution matrix associated with \cref{eq:normalequationsolutionwin} is immediately symmetric, and it reduces to the original windowed form for $\xVec^{(p)}(\regparam_p)$ when the GSVD is applied. Still, the more general expression \cref{eq:normalequationsolutionwin} facilitates an alternative direction for finding multiple regularization parameters based on blocks of components in $\trans{\bfA}\dVec$, such as using domain multisplitting \cite{renaut-1998-msls,RLG}. The expression is also relevant when it is not feasible to find a mutual spectral decomposition but it is possible to implement forward operations with matrices $\bfA$ and $\bfL$. Noting that the proof of \Cref{thm:Main UPRE Result} does not make any assumptions about the use of the GSVD, we reiterate that \Cref{thm:Main UPRE Result} provides a $\mathtt{UPRE}$ form that can be used more generally. 
\end{remark}

We now focus on the use of mutual spectral decompositions for $\bfA$ and $\bfL$ that can be used to simplify the expressions used to estimate $\regparamVec$ as the solution of \cref{eq:optscalarUPRE} applied for \cref{eq:Big UPRE}.  The standard approach uses the GSVD (or the SVD when $\bfL=\bfeye$) that is also relevant for Kronecker product forms for $\bfA$ and $\bfL$. Discrete Fourier or cosine transforms ($\mathtt{DCT}$)  also provide a mutual decomposition that can be employed. Here we give the results in terms of the GSVD, and will show how it can be used in \Cref{sec:Validation}.  The proofs of elementary algebraic results are given in \Cref{app:algebra}. The relation of the $\mathtt{DCT}$ to the GSVD is given in \Cref{app:DCT} 

For the single data set case with windowing, \edited{and introducing the diagonal matrix $\bfPsi=\bfeye-\bfPhi$, \Crefrange{lem:normres}{lem:tracewin} immediately provide}
\begin{align}
    \|\rWin(\regparamVec)\|_2^2  &= \sum_{j=1}^{q^*} \left(\sum_{p=1}^Pw_j^{(p)}\Psi_{jj}(\regparam_p)\right)^2  \hat{d}_j^2+ \sum_{j=n+1}^m  \hat{d}_j^2, 
\end{align}
and  
\begin{align}
    \trace( \bfA_{\mathtt{win}}(\regparamVecBig)) &= (n-q^*)+ \sum_{j=\ell+1}^{q^*} \sum_{p=1}^P w_j^{(p)} \Phi_{jj}(\regparam_p),
\end{align}
respectively.
Hence, ignoring constant terms, and using the index $(r,p)$ to indicate data set $r$ and window $p$, we can write 
\begin{multline}\label{eq:UPREBigR}
    \UWinBig(\regparamVec) = \frac{1}{M} \sum_{r=1}^R \left(\sum_{j=1}^{q_r^*} \left(\sum_{p=1}^Pw_j^{(r,p)}\Psi^{(r)}_{jj}(\regparam_p) \right)^2 \left(\hat{d}^{(r)}_j\right)^2+ \right.\\
    \left.
    {2\sigma_r^2} \sum_{j=\ell+1}^{q_r^*} \sum_{p=1}^P w_j^{(r,p)} \Phi^{(r)}_{jj}(\regparam_p) \right).
    \end{multline}
Likewise, for non-overlapping windows using \Cref{lem:normres,lem:tracewin}, ignoring constant terms we obtain a function for each window, 
\begin{align}\label{eq:UPREwinsepR}
    \UWinBig^{(p)}(\regparam_p) = \frac{1}{M} \left(\sum_{r=1}^R \sum_{j\in\mathtt{win}^{(r,p)} }\left(\Psi^{(r)}_{jj}(\regparam_p)  \hat{d}^{(r)}_j\right)^2 + 
    {2\sigma_r^2}\sum_{j\in\mathtt{win}^{(r,p)} }\Phi^{(r)}_{jj}(\regparam_p) \right).
\end{align}

Finally, combining \cref{eq:UPREBigR,eq:UPREwinsepR} we have the main result to estimate the windowed regularization parameters using $R$ measurements $\dVec^{(r)}$ 
for  unknown $\xVec^{(r)}$, $r=1:R$ given the GSVD for the system 
$\bfA$ with penalty matrix $\bfL$. In this case $m_r=m$, $n_r=n$, $q^*_r=q^*$, $M=Rm$, and we assume that the same windows are used for each system. 
\begin{proposition}
Under \Crefrange{ass:window}{ass:whitenoise}, with now $\bfA^{(r)}=\bfA$, $\bfL^{(r)}=\bfL$ and  $\bfSigma=\bfSigma^{(r)}=\sigma^2 \bfeye_m$, the windowed parameter vector $\regparamVec$ for the $\mathtt{UPRE}$ function, after removing the common factor $1/m$  is 
\begin{multline}
	\label{eq:Big UPRELearn}
\regparamVec_{\mathtt{UPRE}}=\argmin_{\regparamVec \in \mathbb{R}_+^P}	\left\{ \frac{1}{R} \sum_{r=1}^R\left(\sum_{j=1}^{q^*} \left(\sum_{p=1}^Pw_j^{(p)}\Psi_{jj}(\regparam_p) \right)^2 \left(\hat{d}^{(r)}_j\right)^2\right) + \right.\\
\left.{2\sigma^2} \sum_{j=\ell+1}^{q^*} \sum_{p=1}^P w_j^{(p)} \Phi_{jj}(\regparam_p)\right\}.
\end{multline}
For non-overlapping windows $\regparamVec$ can be found by minimizing for each scalar parameter $\regparam_p$ independently  using the frequency domain data $\hat{\dVec}^{(r)}$, $r=1:R$
\begin{align}
	\label{eq:Window UPRELearn}
\regparam_p=\argmin_{\regparam >0}\left\{
\frac{1}{R} \sum_{r=1}^R \left(\sum_{j\in\mathtt{win}^{(p)} }\left(\Psi_{jj}(\regparam) \hat{d}^{(r)}_j\right)^2 \right) + 
    {2\sigma^2}\sum_{j\in\mathtt{win}^{(p)} }\Phi_{jj}(\regparam)\right\}.
\end{align}
\end{proposition}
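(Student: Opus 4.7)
The plan is to derive the Proposition as a direct specialization of the general multi-data multi-window $\mathtt{UPRE}$ expression in \cref{eq:UPREBigR}, which was itself established from \Cref{thm:Main UPRE Result} together with the GSVD simplifications in \Crefrange{lem:normres}{lem:tracewin}. Under the stated hypotheses, $\bfA^{(r)}=\bfA$ and $\bfL^{(r)}=\bfL$ for every $r$, so a single GSVD serves all $R$ systems. Consequently the filter diagonal entries $\Phi_{jj}(\regparam)$ and $\Psi_{jj}(\regparam)=1-\Phi_{jj}(\regparam)$, the rank-like indices $\ell$, $q^*$, and the window matrices $\bfW^{(p)}$ no longer need the system superscript $(r)$. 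With the further hypothesis $m_r=m$ we obtain $M=Rm$ so that $m_r/M=1/R$, and with $\bfSigma^{(r)}=\sigma^2\bfeye_m$ we may replace each $\sigma_r^2$ by the common $\sigma^2$.

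First I would substitute these common-system simplifications into \cref{eq:UPREBigR}. The result is an expression in which the outer normalization $1/m$ is a positive constant, and the terms $\sum_{j=n+1}^m(\hat{d}^{(r)}_j)^2$ coming from the residual decomposition in \Cref{lem:normres} and the additive $-\sigma^2$ term coming from \cref{eq:Individual UPREwhite} are independent of $\regparamVec$. Dropping these $\regparamVec$-free constants and the overall positive factor does not alter the argmin, yielding precisely \cref{eq:Big UPRELearn}.

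Second, for the non-overlapping case I would exploit the fact that the weights degenerate to $w_j^{(p)}=\mathbf{1}_{j\in\mathtt{win}^{(p)}}$, so for each $j$ there is a unique $p(j)$ with $w_j^{(p(j))}=1$ and all other weights vanish. Then
\begin{align*}
\Bigl(\sum_{p=1}^P w_j^{(p)}\Psi_{jj}(\regparam_p)\Bigr)^{2}=\Psi_{jj}(\regparam_{p(j)})^{2},
\qquad \sum_{p=1}^P w_j^{(p)}\Phi_{jj}(\regparam_p)=\Phi_{jj}(\regparam_{p(j)}),
\end{align*}
so the inner sum over $j$ splits into a disjoint union over $p$ of sums over $j\in\mathtt{win}^{(p)}$. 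This gives the per-window objectives of \cref{eq:UPREwinsepR}, specialized to the common-system setting; since these per-window pieces share no common variable, the joint minimization decouples into $P$ scalar minimizations over $\regparam_p>0$, which is exactly \cref{eq:Window UPRELearn}.

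The bookkeeping (identifying which terms are $\regparamVec$-independent and can be discarded, and verifying that the squared weighted sum collapses as claimed) is routine, so the only conceptual step is the separability argument for non-overlapping windows; this step is straightforward because the weighted filter is a convex combination across disjoint index sets. Hence I anticipate no significant obstacle — the proposition follows by direct substitution and grouping, and a one-line appeal to the fact that minimizing a sum of functions with disjoint variables is equivalent to minimizing each summand independently.
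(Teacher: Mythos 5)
Your proposal is correct and follows essentially the same route as the paper, which presents this proposition as the direct specialization of \cref{eq:UPREBigR} and \cref{eq:UPREwinsepR} to the common-system case ($m_r=m$, $M=Rm$, $q_r^*=q^*$, $\sigma_r^2=\sigma^2$, shared windows), with $\regparamVec$-independent terms and the factor $1/m$ discarded. The decoupling argument for non-overlapping windows via the indicator weights is likewise exactly the mechanism underlying \cref{eq:UPREwinsepR} and \Cref{lem:normres,lem:tracewin}.
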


\subsection{Generalized Cross Validation}\label{sec:gcv}
In contrast to the $\mathtt{UPRE}$ function, the $\mathtt{GCV}$ function does not require knowledge of the covariance matrix $\bfSigma$. Yet, it is statistically based, and is obtained by applying a leave-one-out analysis. It  yields the function
\begin{align}
	\label{eq:GCV}
	\G(\regparam) = \frac{\frac{1}{\mA}\|\rReg\|_2^2}{\left(\frac{1}{\mA}\trace(\bfeye_{\mA} - \A)\right)^2} = \frac{\frac{1}{\mA}\|\rReg\|_2^2}{\left(1 - \frac{1}{\mA}\trace(\A)\right)^2}, 
\end{align}
for which we define 
\begin{align}\label{eq:argminsGCV}   \regparam_{\mathtt{GCV}} = \argmin_{\regparam > 0} \G(\regparam).
\end{align}
It is immediate that we can define $\regparam_{\mathtt{GCV}}$ for the concatenated systems, assuming a leave-one-out-analysis applied for the system defined by $\widetilde{\bfA}$, replacing $\mA$ by $M$
and the residual and trace terms calculated for $\widetilde{\bfA}$. On the other hand, it was shown in \cite{ChEaOl:11} that the immediate application of \cref{eq:GCV} for the system described by the single system with windowing is not consistent with the leave-one-out analysis. Rather, deriving the $\mathtt{GCV}$ for the windowed system from first principles, assuming the standard Tikhonov regularization with $\bfL=\bfeye_n$,  yields a new function \cite[Theorem 3.2]{ChEaOl:11} which is more complex algebraically. Applying the same analysis from first principles based on the Allen Press function, \cite{GoHeWa}, for the generalized Tikhonov form, we arrive at the following result which expands on \cite[Theorem 3.2]{ChEaOl:11}. 
\begin{theorem}[Windowed $\mathtt{GCV}$ Function for a single data set\label{thm:WindowedGCV}]
Assume $\bfL$ has full rank (row rank if $\mL<n$ but column rank if $\mL\ge n$), $\lambda_j>0$, $j=1:q^*$,  $\mA\ge n$, and  $0=\delta_1 =\dots= \delta_\ell<\delta_{\ell+1} \dots \le \delta_n$. The windowed $\mathtt{GCV}$ function for the generalized Tikhonov regularization is given by 
\begin{multline}\label{thm:windowgcvfunc}
    \GWin(\regparamVec) =
    \frac{1}{m} \left(\sum_{j=q^*+1}^m \left(1 + \left(\sum_{p=1}^P\frac{1-\nu_p}{\mu_p}\right)\right)^2\hat{d}_j^2 + \right.\\
      \left.\sum_{j=1}^{q^*}\left( 1 +\left(\sum_{p=1}^P\frac{1-\nu_p}{\mu_p}\right) -\left(\sum_{p=1}^P \frac{1}{\mu_p}
     \frac{\gamma_j^2w_j^{(p)}}{\gamma_j^2+\regparam_p^2}\right) \right)^2\hat{d}_j^2  \right).
    \end{multline}
Here the number of windows is $P$, the weights on each window for generalized singular value $\gamma_j$ are given by $w_j^{(p)}$,  and $\mu_p$ and $\nu_p$ are obtained from \Cref{lem:diag} as
\begin{align*}
    \mu_p &= \frac{1}{m}\left(m-n +q^*- \sum_{j=1}^{q^*}\frac{\gamma_j^2}{\gamma_j^2+\regparam_p^2}\right) \text{ and }
    \nu_p= \frac{1}{m}\left(m-n +q^*-  \sum_{j=1}^{q^*}\frac{\gamma_j^2w_j^{(p)}}{\gamma_j^2+\regparam_p^2} \right). 
\end{align*}
\end{theorem}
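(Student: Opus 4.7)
The plan is to follow the strategy of \cite{ChEaOl:11} used to derive the windowed GCV when $\bfL=\bfeye_n$, but in place of the SVD use the GSVD of \Cref{def.GSVD} so that the generalized Tikhonov structure is preserved. The starting point is the Allen--Press identity for ordinary cross validation \cite{GoHeWa}, which in our windowed setting I would apply window-by-window: because the predicted data decomposes as $\bfA\xWin(\regparamVec) = \sum_{p=1}^P \bfA \bfA^\sharp_{\mathtt{win}^{(p)}}(\regparam_p)\dVec$, a leave-one-out prediction at index $k$ is obtained as the sum of the per-window leave-one-out predictions, and for each window the Allen--Press identity expresses that prediction in terms of the residual at $k$ and the $k$th diagonal entry of the window's influence matrix $\bfA(\regparam_p) \bfW^{(p)}(\cdot)$. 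Replacing those per-window diagonal entries by their averages, namely $\tfrac{1}{m}\trace(\cdot)$, gives the rotation invariant GCV analogue, and it is exactly this averaging that produces the quantities $\mu_p$ and $\nu_p$.

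The second step is to move into the GSVD basis. Setting $\hat{\dVec}=\trans{\bfU}\dVec$, the per-window influence matrices $\bfA^{(p)}_{\mathtt{win}}(\regparam_p) = \bfU\bigl(\bfDelta \bfPhi_{\mathtt{win}^{(p)}}(\regparam_p) \pinv{\bfDelta}\bigr)\trans{\bfU}$ and the unwindowed $\bfA(\regparam_p)=\bfU\bigl(\bfDelta\bfPhi(\regparam_p)\pinv{\bfDelta}\bigr)\trans{\bfU}$ are conjugates of diagonal matrices whose $j$th entry is $w_j^{(p)}\gamma_j^2/(\gamma_j^2+\regparam_p^2)$ and $\gamma_j^2/(\gamma_j^2+\regparam_p^2)$ respectively on $\ell<j\le q^*$, equals $w_j^{(p)}$ or $1$ on $q^*<j\le n$, and is zero for $j\le\ell$ or $j>n$. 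Summing these diagonal entries (invoking \Cref{lem:diag}) gives $\trace(\bfeye-\bfA(\regparam_p))/m=\mu_p$ and $\trace(\bfeye-\bfA^{(p)}_{\mathtt{win}}(\regparam_p))/m=\nu_p$ in the form stated, and the quantity $(1-\nu_p)/\mu_p$ then plays the role of the Allen--Press leave-one-out correction factor for window~$p$.

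With these ingredients in hand, I would expand the squared leave-one-out residual at spectral index $j$. Using the partition-of-unity $\sum_p w_j^{(p)}=1$, for $j>q^*$ the residual from the full prediction vanishes so only the Allen--Press correction terms $\sum_p (1-\nu_p)/\mu_p$ survive, producing the factor $\bigl(1+\sum_p (1-\nu_p)/\mu_p\bigr)^2\hat{d}_j^2$ in the first sum of \cref{thm:windowgcvfunc}. For $1\le j\le q^*$ the genuine filtered residual contributes the additional subtracted term $\sum_p \mu_p^{-1}\gamma_j^2 w_j^{(p)}/(\gamma_j^2+\regparam_p^2)$ inside the square, which combines with the Allen--Press factor to give the second sum. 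Summing the two contributions and dividing by $m$ yields the claimed $\GWin(\regparamVec)$.

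The main obstacle I anticipate is exactly the bookkeeping across the three spectral ranges, which does not arise in \cite{ChEaOl:11}, since there $\bfL=\bfeye$ forces $\ell=0$ and $q^*=n$. In the present generalized setting one must track separately: the components $j\le\ell$ in the joint null space where $\Phi_{jj}\equiv 0$, the components $\ell<j\le q^*$ where the filter is nontrivial, the components $q^*<j\le n$ where $\Phi_{jj}\equiv 1$ so the filter contributes trivially but the window weights are still active through the partition-of-unity identity, and the components $n<j\le m$ which lie outside the range of the filtered action altogether. Verifying that these pieces combine exactly into the two sums of \cref{thm:windowgcvfunc}, and that the full-rank assumption on $\bfL$ together with the ordering $0=\delta_1=\cdots=\delta_\ell<\delta_{\ell+1}\le\cdots\le\delta_n$ makes $\mu_p$ strictly positive (so the denominators are well-defined), is the delicate part of the argument.
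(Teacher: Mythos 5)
Your overall architecture --- rotate to a spectral basis, identify $\mu_p$ and $\nu_p$ as the (constant) diagonal entries of the unwindowed and windowed influence matrices, then do the three-way spectral bookkeeping --- matches the paper's proof, and you anticipate the correct final structure. But the central step as you describe it has a genuine gap. You propose to apply the Allen--Press leave-one-out identity \emph{per window}, expressing each window's leave-one-out prediction through the $k$th diagonal entry of that window's own influence matrix. No such identity is available: the per-window smoother $\inv{(\trans{\bfG}\bfG+\regparam_p^2\trans{\bfLambda}\bfLambda)}\bfW^{(p)}\trans{\bfG}$ is not the minimizer of any least squares functional (the inserted $\bfW^{(p)}$ destroys that structure), so the classical identity relating the deleted residual to the ordinary residual divided by one minus a diagonal entry does not apply to it. Worse, if it did apply in the form you state, the correction factor for window $p$ would carry the windowed diagonal $\nu_p$ in its denominator, whereas the correct denominator is $\mu_p$, the diagonal of $\bfeye_m-\bfG\bfG^\sharp(\regparam_p)$ for the \emph{unwindowed} problem. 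You write down the right combination $(1-\nu_p)/\mu_p$ at the end, but the route you describe does not produce it.

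The paper closes this gap by computing the leave-one-out windowed solution directly from its definition: remove row $k$ from $\bfG=\bfC\bfDelta$ via $\bfE_k=\bfeye_m-\bfe_k\trans{\bfe}_k$ wherever $\bfG$ appears in the windowed normal equations, and apply the Sherman--Morrison formula to each $\inv{(\bfH(\regparam_p)-\bfh_k\trans{\bfh}_k)}$. The Sherman--Morrison denominator $\beta_k=1-\trans{\bfh}_k\inv{\bfH(\regparam_p)}\bfh_k=\mu_p$ comes from the unwindowed normal-equations matrix and is blind to $\bfW^{(p)}$; the quantity $\nu_p$ enters separately when the deflated data $\bfE_k\bfC\hat{\dVec}$ is pushed through the windowed filter. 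It is this explicit rank-one-update computation that couples $\mu_p$ and $\nu_p$ in the stated way. A second, smaller point: you present the passage from diagonal entries to traces as an averaging approximation (``replacing \dots by their averages''), but in the paper it is exact --- the rotation by the unitary $\bfC$ makes $\bfeye_m-\bfG\bfG^\sharp(\regparam_p)$ circulant, so its diagonal entries are literally all equal to $\mu_p$ (\Cref{lem:diag}); this exactness is what makes the result a genuine leave-one-out Allen--Press function rather than an ad hoc GCV-style substitution.
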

\noindent Note that $\Phi_{jj}=1$ for $j>q^*$ and so some terms in the summation cancel when $q^*<n$. The same applies for $(\Phi_{\mathrm{win}^{(p)}}(\regparam_p))_{jj}$, also with $w_j^{(p)}=0$.

In contrast to the analysis for the $\mathtt{UPRE}$, the derivation and associated proof, as given in \Cref{app:GCV}, for \Cref{thm:WindowedGCV} relies explicitly on the availability of a mutual spectral decomposition for $\bfA$ and $\bfL$. In  \cite{ChEaOl:11} the equivalent result relied on the use of the SVD and made the assumption that all singular values are positive. The result here explicitly permits $\delta_j=0=\gamma_j$, and is sufficiently general that it applies for the case when  $\bfL$ is tall or wide. 
On the other hand, as noted in \cite{ChEaOl:11}, \cref{thm:windowgcvfunc} is a complicated non-linear function of $P$ variables, and it can be beneficial to consider alternative approximations for finding a good $\mathtt{GCV}$ type estimator for $\regparamVec$. There, it is suggested that non-overlapping windows can be used to find a good initial point for optimization, in which case a $\mathtt{GCV}$ function for each window can be optimized separately for scalar $\regparam_p$. Here we consider this approach, but also compare with an estimator that is obtained by applying the standard $\mathtt{GCV}$ function  in which the residual and trace terms in the numerator and denominator are calculated directly. Having provided \Cref{thm:WindowedGCV} it is immediate, as already noted, that this estimator is not a true $\mathtt{GCV}$ estimator for the windowed case. 

\inserted{First we note that in the scalar parameter case, \cref{eq:GCV} generalizes to
\begin{equation} 
\label{eq:GCV_Big}
    \GBig(\regparam) =  \frac{\frac{1}{M}\left\|\rBig(\regparam)\right\|_2^2}{\left(1 - \frac{1}{M}\trace\left(\ABig(\regparam)\right)\right)^2}
\end{equation}
for the MD case with $M = \sum_{r=1}^R m_r$. When using non-overlapping windows, from \cref{eq:GCV_Big} and $\AWinBig(\regparamVec) = \ABig\AWinBig^{\sharp}$ we can select $\regparam_p$ for $p = 1:P$ using the independent functions
\begin{equation} 
\label{eq:GCV_Big Decoupled}
	\GEstWinBig^{(p)}(\regparam_p) = \frac{\frac{1}{M}\|\widehat{\rBig}_{\mathtt{win}}^{(p)}(\regparam_p)\|_2^2}{\left(1 - \frac{1}{M}\trace\left(\ABig\ABig_p^{\sharp}\right)\right)^2},
\end{equation}
with $\widehat{\rBig}_{\mathtt{win}}^{(p)}(\regparam_p) = \trans{\widetilde{\bfU}}\ABig \xBig^{(p)}(\regparam_p) - \widetilde{\bfW}^{(p)} \trans{\widetilde{\bfU}} \dBig$. \Cref{eq:GCV_Big Decoupled} can also be applied to a single data set $\dVec \in \mathbb{R}^m$ and non-overlapping windows (the tilde notation is dropped and $M$ replaced by $m$).
The use of \cref{eq:GCV_Big Decoupled} is analogous to \cite[eq. 3.13]{ChungEasleyOLeary} extended to the MD environment; \cref{eq:GCV_Big Decoupled} is not a true \texttt{GCV} function, though minimization is easier than minimizing \cref{thm:windowgcvfunc}. As with the derivations of the $\mathtt{UPRE}$ functions in \cref{sec:upre}, \cref{eq:GCV_Big Decoupled} can also be written in terms of the GSVD using the results in \Cref{app:algebra}.} Finally, we note, as with the $\mathtt{UPRE}$, that having estimators for the scalar and windowed parameter solutions immediately provides the estimators for the block  systems of equations for multiple data sets. 



\section{Numerical Experiments}\label{sec:Validation}
To evaluate the effectiveness of the spectral windowing and scalar parameter selection methods described in \Cref{sec:Methods}, \inserted{we present the results for a single two dimensional test problem, with different noise levels and blur width. Results for one dimensional problems are presented in \cite{Byrne}}. For both problems the parameter(s) are found for a set of training data, and then validated using a separate validation  set.  The results for the $1$D problem, which serves as a proof of concept for the methods and that use MRI data that is built into \matlab, are detailed in \cite{Byrne}. More relevant are the results for the two-dimensional problem which is described in detail here. 
This problem utilizes the images in  \cref{fig:MESSENGER True} of the planet Mercury obtained by the MESSENGER space probe\footnote{The selected MESSENGER images are available to the public courtesy of NASA and JPL-Caltech \cite{MESSENGER}. The identifiers of the images in \cref{fig:MESSENGER True}, starting from the top row and moving left to right, are: PIA10173, PIA10174, PIA10177, PIA10942, PIA11246, PIA12042, PIA12068, PIA12116, PIA14189, PIA15756, PIA18372, PIA19024, PIA19203, PIA19213, PIA19267.}.
\begin{figure}[htbp]
\centering
\includegraphics[width=0.9\textwidth]{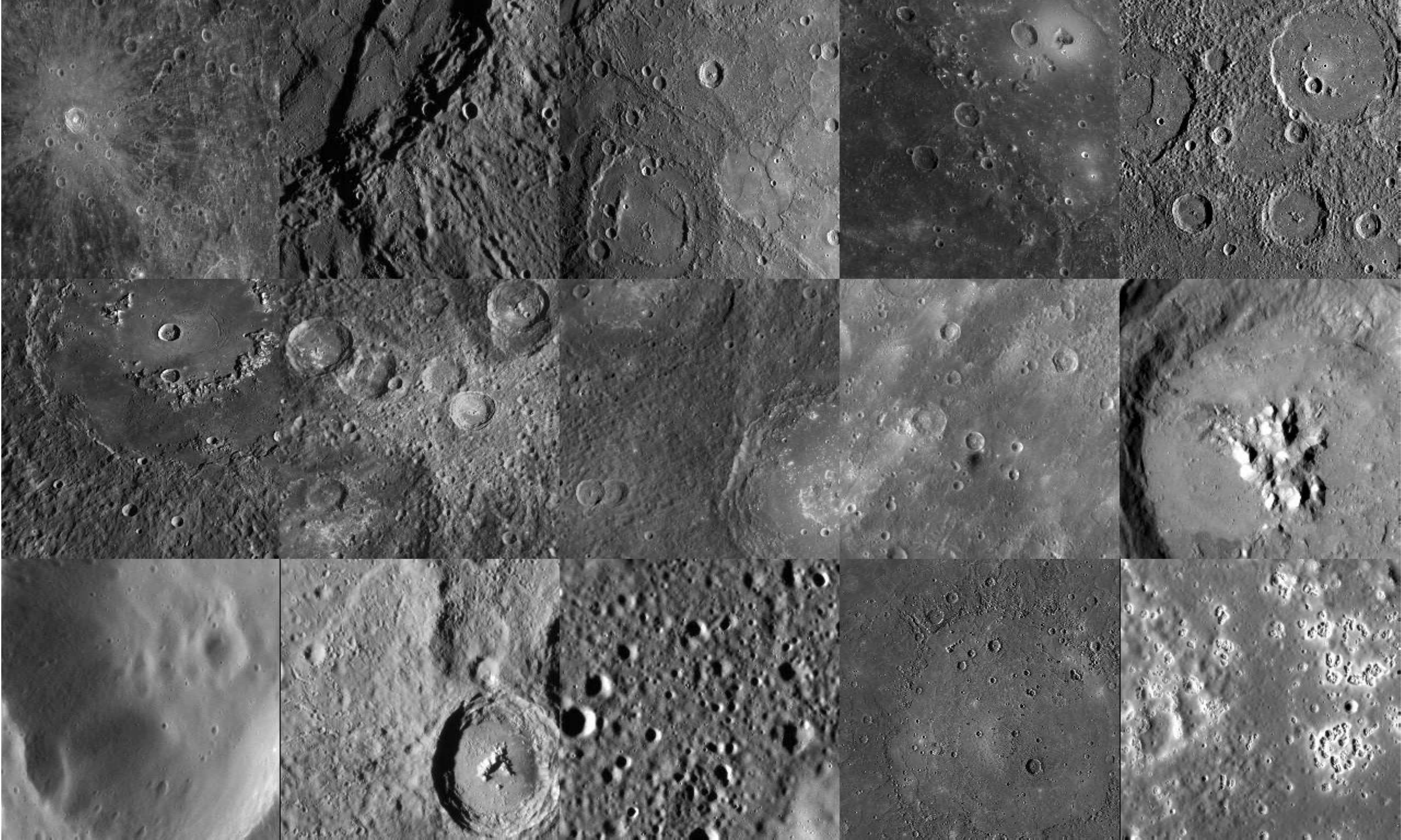}
\caption{Selected images used for MESSENGER 2D test problem. Available courtesy of NASA/JPL-Caltech \cite{MESSENGER}.}
\label{fig:MESSENGER True}
\end{figure}
The signal-to-noise ratio (SNR) is used as a measurement for noise content in the images and is given by
\begin{align*}
\text{SNR} = 10\log_{10}\left(\frac{\mathcal{P}_{\text{signal}}}{\mathcal{P}_{\text{noise}}}\right).
\end{align*}
In the discrete setting, the average power $\mathcal{P}$ of a vector $\xVec$ of length $n$ is defined as $\frac{1}{\mA}\|\xVec\|^2_2$. Using this definition for vectors $\bVec$ and $\noiseVec$, $\mathcal{P}_{\text{signal}} = \frac{1}{\mA}\|\bVec\|^2_2$ and $\mathcal{P}_{\text{noise}} = \frac{1}{\mA}\|\noiseVec\|^2_2$ and so the quotient in the logarithm is $\|\bVec\|_2^2/\|\noiseVec\|_2^2$. If $\bVec$ is a matrix representing an image, in which case $\noiseVec$ is a realization of a random matrix, the $2-$norm can be replaced by the Frobenius norm. 

For a basis of comparison, parameters were also selected as minimizers of the \textit{learning} function
\begin{align}
	\label{eq:Learning function}
	\BWinBig(\regparamVec) = \frac{1}{R}\left\|\xWinBig(\regparamVec) - \xBig\right\|_2^2 = \frac{1}{R}\sum_{r=1}^R \BWin^{(r)}(\regparamVec),
\end{align} 
where
\begin{align}
	\label{eq:Individial Best Function}
    \BWin^{(r)}(\regparamVec) = \|\xWin^{(r)}(\regparamVec) - \xVec^{(r)}\|_2^2.
\end{align}
Note that this definition requires that the true solutions, $\{\xVec^{(r)}\}_{r=1}^R$,  are known, \edited{as it finds the parameters to minimize the mean squared relative error ($\mathtt{MSE}$) using known data}. Regularization parameters chosen as minimizers of \cref{eq:Learning function} are \textit{optimal} in the sense of minimizing the \edited{$\mathtt{MSE}$} of the regularized solutions $\xWin^{(r)}(\regparamVec)$; the use of \cref{eq:Learning function} was considered in \cite{ChungEspanol2017}. One could also find minimizers $\regparamVec^{(r)}$ of \cref{eq:Individial Best Function} for each $r = 1:R$, which would produce parameters that are \textit{optimal} for their own data set. In the results we use \edited{$\mathtt{MSE}$} to indicate results that are found using the learning function \cref{eq:Individial Best Function}.

In the experiments we use the spectral windows as described in \Cref{sec:windows} and to evaluate the forward operators we use the Kronecker product $2$D discrete cosine transform ($\mathtt{DCT}$). The relation of the $1$D $\mathtt{DCT}$ to the GSVD is  briefly described in \Cref{sec:Decomp}. The extension that relates the Kronecker product $2$D $\mathtt{DCT}$ to the KP GSVD follows similarly. 
\subsection{Spectral Windows}\label{sec:windows}
\edited{In the experiments we use windowing following the approach in \cite{ChEaOl:11}. }
 First we consider non-overlapping windows,  $\bfW^{(p)}$, for which the components of their corresponding weight vectors $\wVec^{(p)}$ satisfy
\begin{align}
	\label{eq:Non-overlapping window condition}
    \wVec_j^{(p)} \in \{0,1\}, \qquad j = 1:n, \quad p = 1: P.
\end{align}
The condition given by \cref{eq:Non-overlapping window condition} means that for each $j = 1:n$, there is exactly one $p \in \{1:P\}$ such that $\wVec_j^{(p)} = 1$.  

Perhaps the simplest way of choosing the components of $\wVec^{(p)}$ is to first choose $P+1$ partition values $\omega^{(0)} \geq \ldots \geq \omega^{(P)}$ such that $\omega^{(0)} \geq \singular_1$ and $\singular_n > \omega^{(P)}$, then  for $p = 1:P$, the  non-zero components occur for those $j$ for which $\singular_j$ is in the $p^{\mathtt{th}}$ window: 
\begin{align}
	\label{eq:Non-overlapping windows}
	\wVec_j^{(p)} =  
	1, \text{ for } \omega^{(p-1)} \geq \singular_j > \omega^{(p)}.
\end{align}
\inserted{Here $\singular_j$ correspond to the generalized singular values, ordered in the opposite order, decreasing rather than increasing. This is consistent with the standard ordering of the singular values when using the SVD of the matrix $\bfA$.} \inserted{The partition values $\partition^{(0)} \geq \ldots \geq \partition^{(P)}$ used in the experiments are formed by taking $P+1$ linearly or logarithmically equispaced samples of the interval $[\gamma_{1},\gamma^*_{n}]$, where $\gamma^*_{n}$ is a largest non-infinite generalized singular value of $(\bfA,\bfL)$.} Partition values can also be used to generate overlapping windows. For example, cosine windows are defined in \cite[Eq. 3.6-3.7]{ChEaOl:11} by using midpoints of each partition. \inserted{Linearly and logarithmically spaced cosine windows are used in the experiments as examples of overlapping windows.}

\subsection{The \texorpdfstring{$\mathtt{DCT}$}{DCT}\label{sec:Decomp}} 
While the GSVD is useful for analyzing problems with a general matrix $\bfA$, for practical image deblurring problems with $m_r=n_r$ it is more computationally efficient to use the 
 $2$D discrete cosine transform ($\mathtt{DCT}$). 
 Assuming that reflexive boundary conditions are applied, \inserted{primarily to reduce the potential for reflection that would arise with zero boundary conditions}, then both $\bfA$ and $\bfL$ have the same block structure and the $\mathtt{DCT}$ can be used to simultaneously diagonalize $\bfA$ and $\bfL$ into  a 
BTTB + BTHB + BHTB + BHHB matrix, with the ``T" and ``H" standing for Toeplitz and Hankel, respectively, \cite{HansenNagyOLeary}.  \Cref{thm:Diag to GSVD}, for which a brief proof is given in \Cref{app:DCT}, describes how a simultaneous diagonalization of these matrices is related to their GSVD.
\begin{theorem}{GSVD for the $\mathtt{DCT}$ \cite{Byrne}
\label{thm:Diag to GSVD}}
Given the simultaneous diagonalization of the $n \times n$ symmetric matrices $\bfA = \trans{\bfC}\widetilde{\bfDelta}{\bfC}$ and $\bfL = \trans{\bfC}\widetilde{\bfLambda}{\bfC}$ where $\bfC$ is orthogonal and $\nullspace(\bfA) ~\cap~ \nullspace(\bfL) = \{\zeroVec\}$, $\bfA$ and $\bfL$ can be expressed as $\bfA = \bfU\bfDelta\trans{\bfX}$ and $\bfL = \bfU\bfLambda\trans{\bfX}$, respectively, where $\bfU$ is orthogonal, $\bfX$ is invertible, $\trans{\bfDelta}\bfDelta + \trans{\bfLambda}\bfLambda = \bfeye_n$, 
$
	0 \leq \Delta_{1,1} \leq \ldots \leq \Delta_{n,n} \leq 1,$ and $
	1 \geq \Lambda_{1,1} \geq \ldots \geq \Lambda_{n,n} \geq 0.
$
\end{theorem}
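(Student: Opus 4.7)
The plan is to construct the GSVD factors by rescaling and reordering the common diagonalization supplied by $\bfC$. Because $\bfA$ and $\bfL$ already share eigenvectors, the left orthogonal factor $\bfU$ and the invertible right factor $\bfX$ will both be built directly from $\bfC$; only the diagonal matrices need to be adjusted so that $\trans{\bfDelta}\bfDelta + \trans{\bfLambda}\bfLambda = \bfeye_n$ and the prescribed monotone orderings hold.

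First I would introduce the diagonal normalization matrix $\bfD$ defined by $D_{jj} = \sqrt{\widetilde{\Delta}_{jj}^{2} + \widetilde{\Lambda}_{jj}^{2}}$, $j=1:n$. The intersection-of-null-spaces assumption rules out $D_{jj}=0$: if some $D_{jj}$ vanished then both $\widetilde{\Delta}_{jj}$ and $\widetilde{\Lambda}_{jj}$ would vanish, making the $j$th column of $\trans{\bfC}$ a nonzero vector in $\nullspace(\bfA) \cap \nullspace(\bfL)$, contradicting the hypothesis. Consequently $\bfD$ is invertible, and setting $\bfDelta_0 = \widetilde{\bfDelta}\inv{\bfD}$ and $\bfLambda_0 = \widetilde{\bfLambda}\inv{\bfD}$ produces diagonal matrices whose entries satisfy $\Delta_{0,jj}^{2} + \Lambda_{0,jj}^{2} = 1$. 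Substituting back yields
\begin{align*}
\bfA = \trans{\bfC}\,\bfDelta_0\,\bfD\,\bfC, \qquad \bfL = \trans{\bfC}\,\bfLambda_0\,\bfD\,\bfC,
\end{align*}
so taking $\bfU_0 = \trans{\bfC}$ (orthogonal) and $\trans{\bfX_0} = \bfD\bfC$ (equivalently $\bfX_0 = \trans{\bfC}\bfD$, which is invertible since $\bfC$ is orthogonal and $\bfD$ invertible) gives a simultaneous, but unordered, factorization of the required shape with a common left factor.

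The ordering is then enforced by a single permutation matrix $\bfP$ chosen so that $\trans{\bfP}\bfDelta_0\bfP$ has diagonal entries in non-decreasing order. Because the normalization $\Delta_{0,jj}^{2} + \Lambda_{0,jj}^{2} = 1$ couples the magnitudes of corresponding entries, sorting the non-negative diagonal of $\bfDelta_0$ upwards simultaneously sorts the non-negative diagonal of $\bfLambda_0$ downwards, matching the stated monotonicity. Inserting $\bfP\trans{\bfP} = \bfeye_n$ into both factorizations and grouping terms defines
\begin{align*}
\bfU = \bfU_0\bfP, \qquad \bfDelta = \trans{\bfP}\bfDelta_0\bfP, \qquad \bfLambda = \trans{\bfP}\bfLambda_0\bfP, \qquad \bfX = \bfX_0\bfP,
\end{align*}
which preserves the factorizations $\bfA = \bfU\bfDelta\trans{\bfX}$, $\bfL = \bfU\bfLambda\trans{\bfX}$, keeps $\bfU$ orthogonal and $\bfX$ invertible, and leaves the identity $\trans{\bfDelta}\bfDelta + \trans{\bfLambda}\bfLambda = \bfeye_n$ invariant since it is a diagonal relation conjugated identically on both sides by $\bfP$.

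The main obstacle is verifying that a \emph{single} permutation produces the correct ordering for both $\bfDelta$ and $\bfLambda$; this works precisely because the normalization ties the two orderings together once both diagonals are non-negative. A preliminary bookkeeping step, required if $\widetilde{\bfDelta}$ or $\widetilde{\bfLambda}$ carry signed diagonal entries (as happens when $\bfL$ is a discretized differential operator), is to absorb diagonal $\pm 1$ sign matrices into $\bfU$ and $\bfX$ before normalizing. Since $\bfX$ is only required to be invertible and not orthogonal, this sign adjustment is routine and does not affect the structural argument above.
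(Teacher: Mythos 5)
Your proposal is correct and follows essentially the same route as the paper's proof: your normalization matrix $\bfD$ is exactly the paper's $\bfS = \sqrt{\trans{\widetilde{\bfDelta}}\widetilde{\bfDelta} + \trans{\widetilde{\bfLambda}}\widetilde{\bfLambda}}$, whose invertibility follows from the null-space condition, and both arguments finish with a single permutation exploiting the fact that $\Delta_{0,jj}^2 + \Lambda_{0,jj}^2 = 1$ ties the two orderings together. Your explicit remark about absorbing $\pm 1$ sign factors before normalizing is a detail the paper's proof leaves implicit, but it does not change the structure of the argument.
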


\inserted{\Cref{thm:Diag to GSVD} serves as a theoretical tool that bridges different matrix decompositions for the purpose of forming a greater variety of implementation options. Specifically, this result assists in the efficient implementation for the regularization estimators, both $\mathtt{UPRE}$ and $\mathtt{GCV}$, using the simplified expressions for the results in \Crefrange{app:UPRE}{app:algebra}. While \Cref{thm:Diag to GSVD} applies to any simultaneous diagonalization of symmetric matrices, the $\mathtt{DCT}$ has the advantage of avoiding complex operations that would generally arise if using fast Fourier transforms, rather than the $\mathtt{DCT}$. Naturally, we could also impose zero boundary conditions or periodic boundary conditions \cite{HansenNagyOLeary}. Our choice to use the $\mathtt{DCT}$ is not generally limiting but appropriate for the given application.}

\subsection{Two-dimensional problems}
The data sets for the 2D test problem consist of images of size $256 \times 256$. \inserted{A total of $16$ images were used and split into training and validation sets containing $8$ images each. To obtain the $16$ images from the $512 \times 512$ Mercury images in \cref{fig:MESSENGER True}, the first $8$ images were chosen and two $256 \times 256$ subimages of each image were selected as the northwest and southeast corners.} Another validation set of images, shown in \cref{fig:Validation Set 2}, was used that consisted of built-in \matlab~images.
\begin{figure}[htbp]\centering
\includegraphics[width=0.9\textwidth]{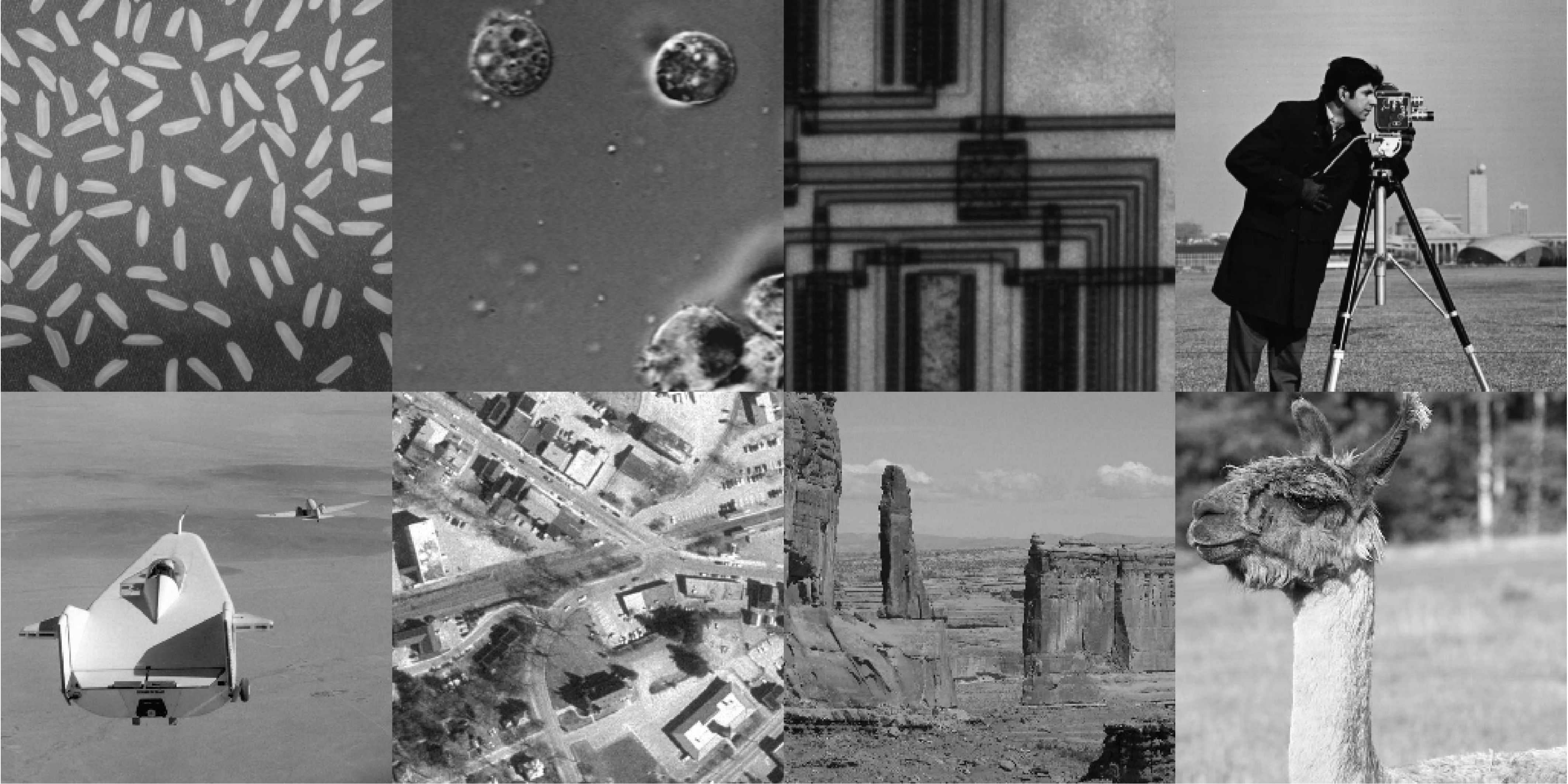}
\caption{The second validation set, consisting of built-in \matlab~images. From left to right starting in the top row, the images are: \texttt{rice.png}, \texttt{AT3\_1m4\_01.tif},~\texttt{circuit.tif}, \texttt{cameraman.tif},~\texttt{liftingbody.png},~\texttt{westconcordorthophoto.png}, \texttt{parkavenue.jpg}, and \texttt{llama.jpg}.}
\label{fig:Validation Set 2}
\end{figure}

A $256 \times 256$ point spread function was formed using a discretization of the zero centered, circularly symmetric Gaussian kernel, $k(x,y) = \exp\left(-{(x^2 + y^2)}/{(2\xi)}\right)$.
The parameter $\xi$ controls the width of the Gaussian kernel. Choosing $k(x,y)$ to be circularly symmetric is for convenience; a Gaussian kernel with different width parameters for the $x$ and $y$ directions can still be used to construct $k(x,y)$ that is doubly symmetric for diagonalization via the $\mathtt{DCT}$ \cite{HansenNagyOLeary}. In regards to the value of $\xi$, values $\xi = 4$, $16$, and $36$ correspond to blurring that is referred to as ``mild," ``medium," and ``severe", respectively,  \cite{IRTools}. The corresponding $k(x,y)$ were discretely convolved with each image as a means of blurring. SNR values of $10$, $25$ and $40$ were used to construct mean zero  independent Gaussian noise vectors  that were added to the blurred images to create the data. For one choice of the penalty matrix $\bfL$, we used the appropriately structured version of the discrete negative Laplacian operator, which is an approximation of the continuous Laplacian operator \cite{DebnathMikusinski2005,LeVeque2007}, and which we denote by $\bfL=\bfL_2$. For the second penalty matrix we used $\bfL=\bfeye$. The structure of $\bfA$ and $\bfL$ allows for simultaneous diagonalization using the $\mathtt{DCT}$ for numerical efficiency (see  \Cref{sec:Decomp}). 

\inserted{The learning methods were evaluated for both the scalar and spectral windowing cases using training data sets of sizes $R=1$ to $8$.   The learned parameters in each case were  then used to construct regularized solutions for data from two independent validation sets. For the windowed regularization we considered both non-overlapping linear/logarithmic windows and overlapping linear/logarithmic cosine windows. The decision to use linear spacing for $\bfL = \bfeye$ and logarithmic spacing for $\bfL = \bfL_2$ is supported by how the ordered spectral components decay \cite{Byrne}. As in \cite{ChEaOl:11}, the windowed $\mathtt{GCV}$ function \cref{thm:windowgcvfunc} was replaced by the $P$ independent approximate $\mathtt{GCV}$ functions \cref{eq:GCV_Big Decoupled} for simplicity when considering non-overlapping windows. Parameters were also obtained for the separable $\mathtt{UPRE}$ method given by \cref{eq:UPREwinsepR}. For the spectral windowing with overlapping windows, the minimizations were initialized using the parameters obtained by the non-overlapping methods. Overall, in terms of the choice to initialize the parameters for the overlapping windows with parameters obtained from the separable case, we note that the windowed $\mathtt{UPRE}$ and $\mathtt{GCV}$ methods corresponding to overlapping windows performed better when the minimizations were initialized using the parameter obtained by the non-overlapping methods. Results without this initialization are not given.}

\inserted{Considering first the scalar parameter MD case, the resulting parameters appear to stabilize as the number of data sets is increased. \Cref{fig:Parameter trends} demonstrates this effect and shows that the amount of stabilization appears to be connected to the homogeneity of the training set. Sets constructed from \cref{fig:MESSENGER True} are homogeneous in the sense that they all contain images of the surface of Mercury. In contrast, \cref{fig:Validation Set 2} consisted of entirely distinct images. By changing which sets are used for training or validation influences the resulting parameters as $R$ increases. The corresponding relative errors of the regularized solutions are shown in \cref{fig:MD Errors}. While the box plots in \cref{Errors} and \cref{ErrorsSwitched} simply look as if they are the same but reordered, the box plots appear similar because the resulting parameters are approximately the same ($\regparam \approx 0.2$). These experiments suggest that it is sufficient to use only a small number of images, relative to the total available, to obtain meaningful results. The use of \cref{fig:Validation Set 2} as a training set was only considered to produce \cref{fig:Parameter trends} and \cref{fig:MD Errors}; its use as a validation set is retained through the remainder of the results.}

\begin{figure}[htbp]
\centering
\subfigure[Parameters $\regparam$ against $R$ (Set 1) \label{Trend}]{\includegraphics[width=0.49\textwidth]{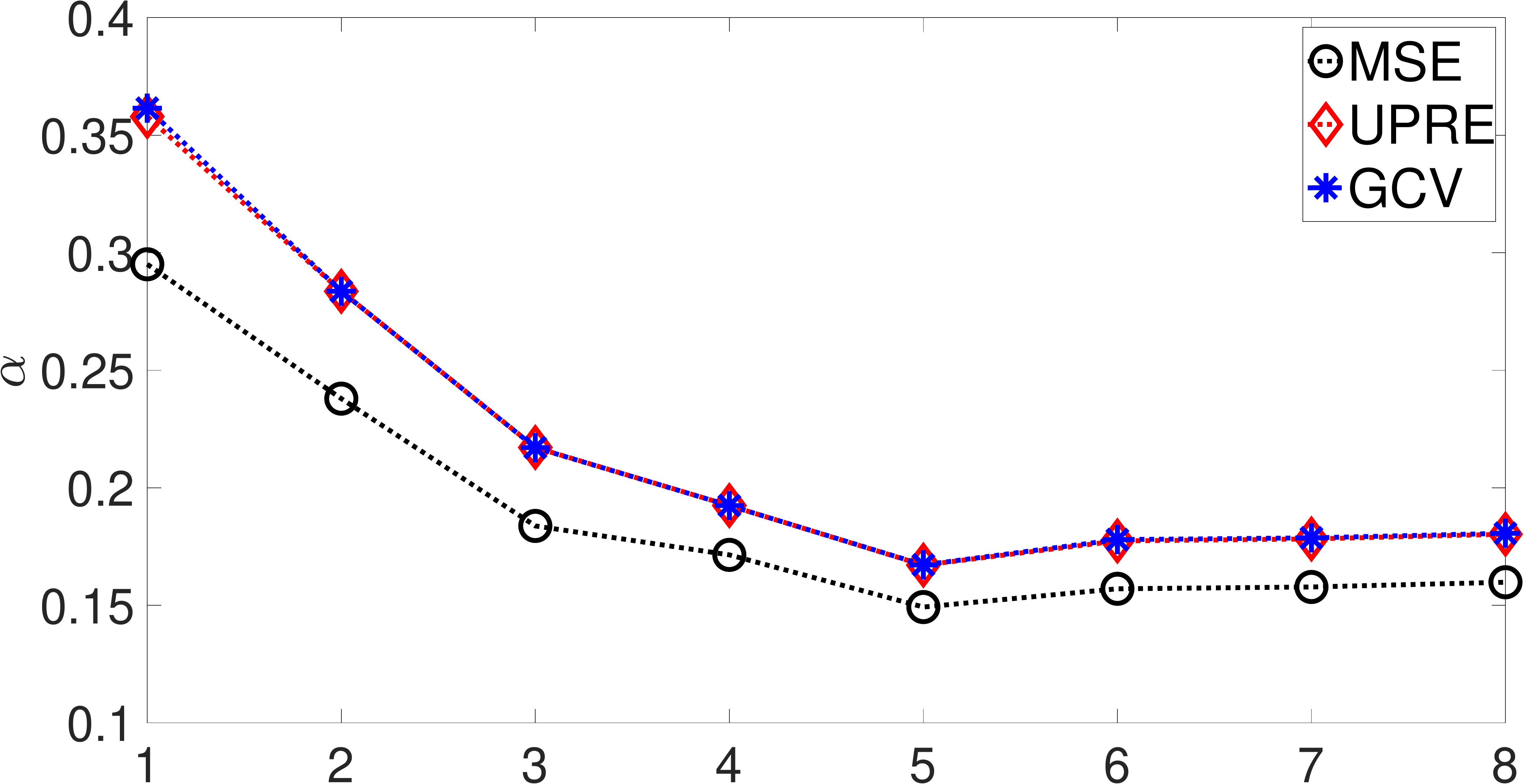}}\hspace{.1cm}
\subfigure[Parameters $\regparam$ against $R$ (Set 3) \label{TrendSwitched}]{\includegraphics[width=0.49\textwidth]{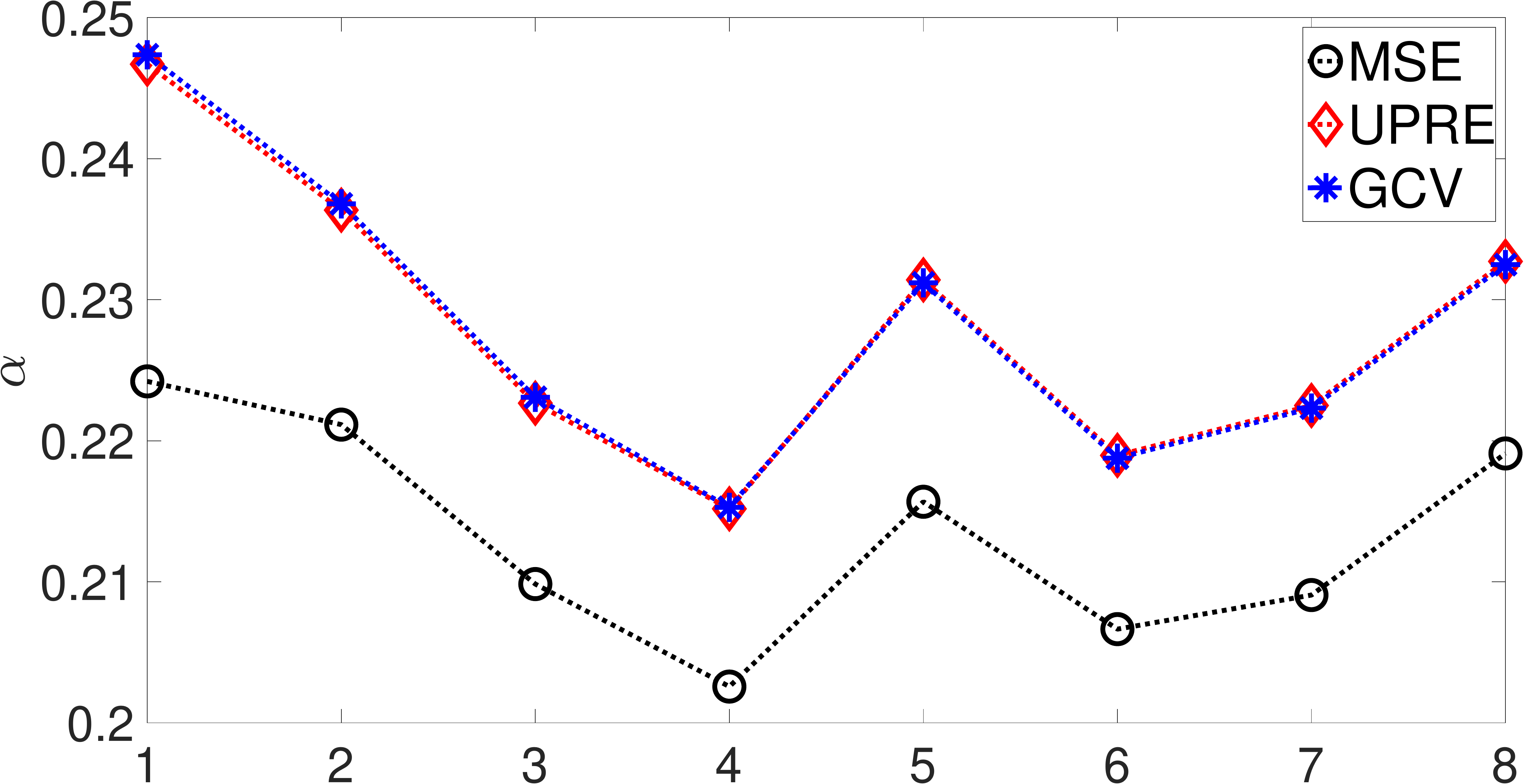}}
\caption{\inserted{\Cref{Trend} illustrates the change in scalar $\regparam$ as the number of data sets increases, here with Set 1 as the training set and in  \cref{TrendSwitched} with  Set 3 (see \cref{fig:MD Errors}). \Cref{Trend} is an example of how scalar regularization parameters can stabilize as the number of data sets in the MD methods increases. In contrast,  \cref{TrendSwitched} shows less stabilization with increasing $R$ when the training set is changed. For both plots, $\xi = 16$, $\bfL = \bfL_2$, and an SNR of 25 was used.}}
\label{fig:Parameter trends}
\end{figure}

\begin{figure}[htbp]
\centering
\subfigure[Relative errors (Set 1) \label{Errors}]{\includegraphics[width=0.49\textwidth]{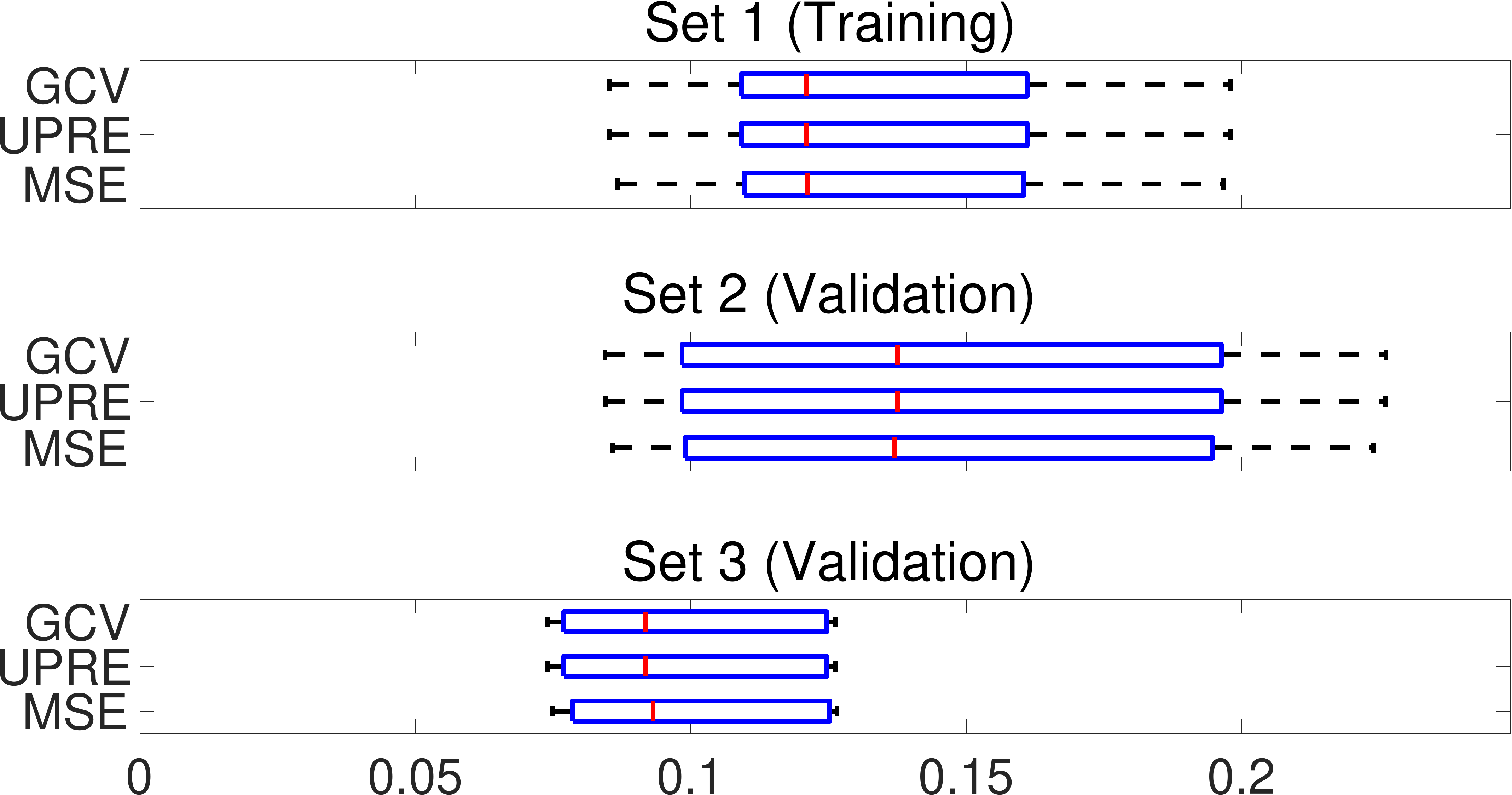}}\hspace{.1cm}
\subfigure[Relative errors (Set 3) \label{ErrorsSwitched}]{\includegraphics[width=0.49\textwidth]{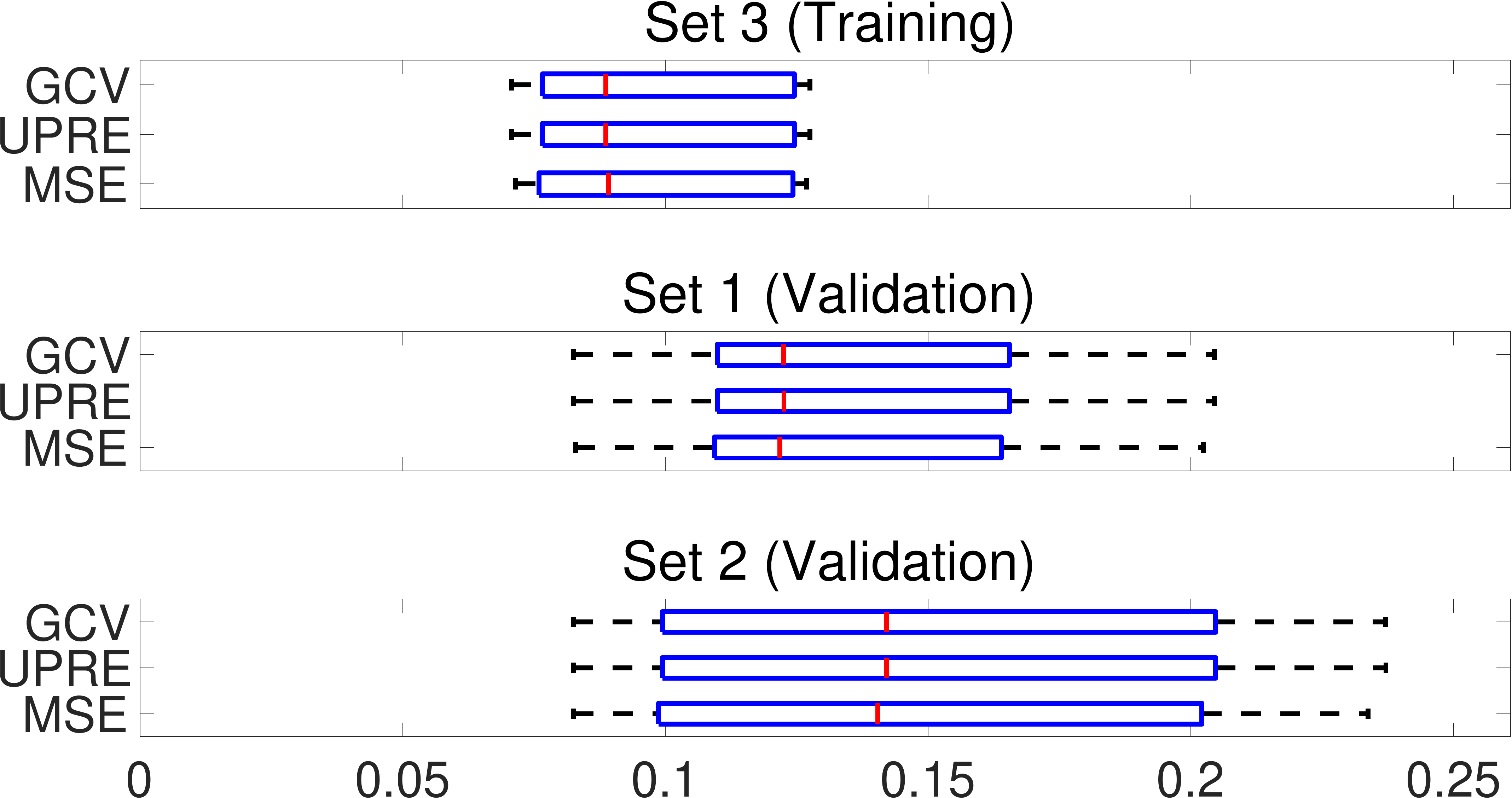}}
\caption{\inserted{Relative errors of regularized solutions obtained for scalar $\regparam$ from each MD method with $R = 5$ data sets. Here Set 1 and Set 2 were constructed from \cref{fig:MESSENGER True}, while Set 3 was constructed from \cref{fig:Validation Set 2}. In \cref{Errors}, Set 1 served as the training set and the resulting parameters were used to construct solutions for the data from Sets 2 and 3. \Cref{ErrorsSwitched} shows results where training was done using Set 3 instead and Sets 1 and 2 served as validation sets. For both plots, $\xi = 16$, $\bfL = \bfL_2$, and an SNR of 25 was used.}}
\label{fig:MD Errors}
\end{figure}

\inserted{In regards to the spectral windowing, typically two windows were sufficient (corresponding to the use of just two parameters in the windowed estimators) to obtain meaningful solutions. The observed benefit of using greater than two windows was minor, an example of which is shown for the windowed $\mathtt{UPRE}$ method in \cref{fig:UPRE Errors}. Another advantage of using two windows is that there is a greater computational cost of finding more parameters than is necessary for meaningful regularized solutions; this is especially true for overlapping windows where decoupling is not an option. Extending the number of windows also has the effect of reducing the influence of one or more parameters. For example, \cref{fig:UPRE Parameters} shows that one of the three parameters obtained from the windowed $\mathtt{UPRE}$ method with three windows is more variable and larger in magnitude than that other two parameters.}

\begin{figure}[htbp]
\centering
\subfigure[Relative errors \label{fig:UPRE Errors}]{\includegraphics[width=0.49\textwidth]{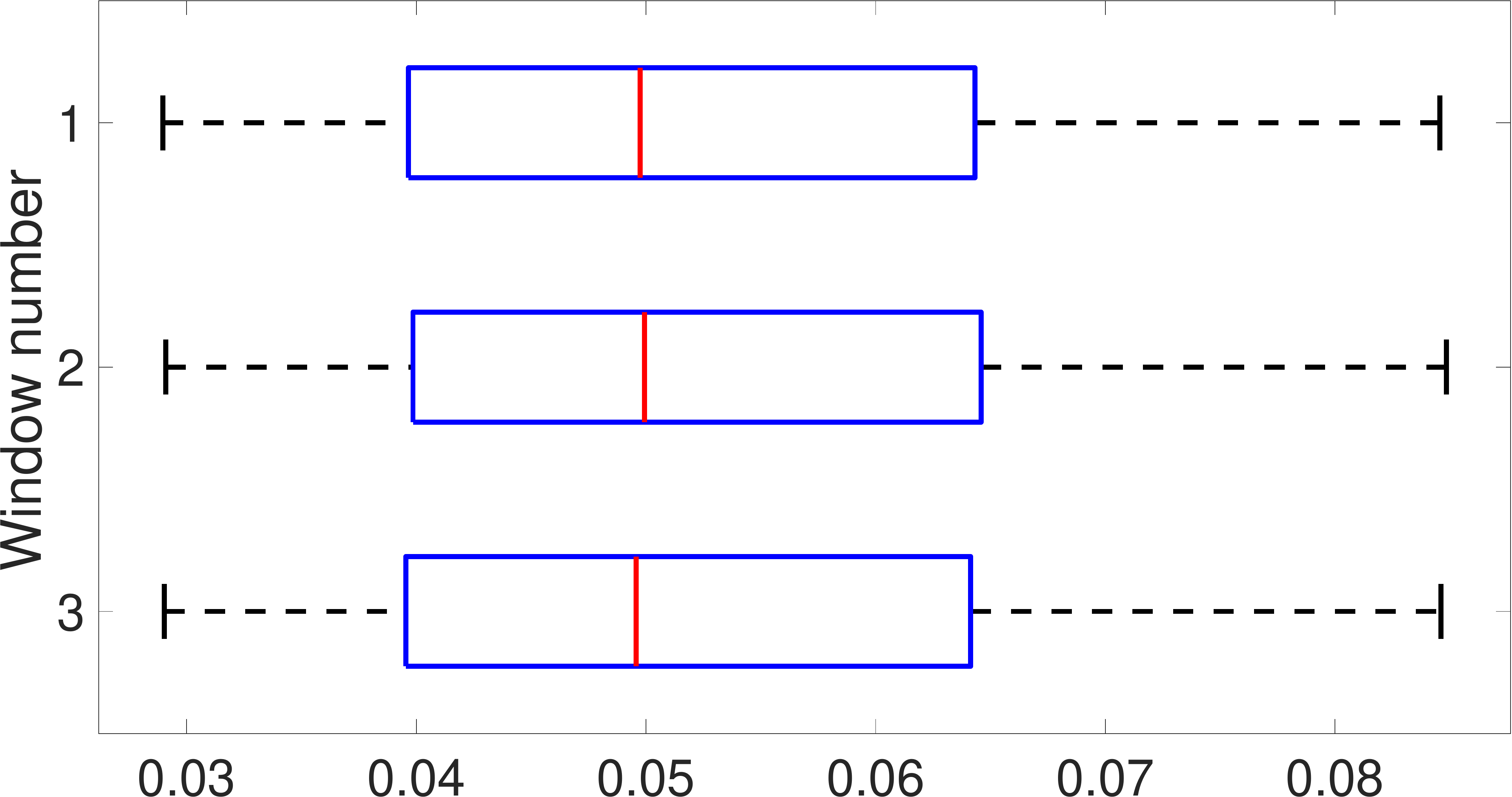}}
\hspace{.1cm}
\subfigure[Parameters for $P = 1:3$ \label{fig:UPRE Parameters}]{\includegraphics[width=0.49\textwidth]{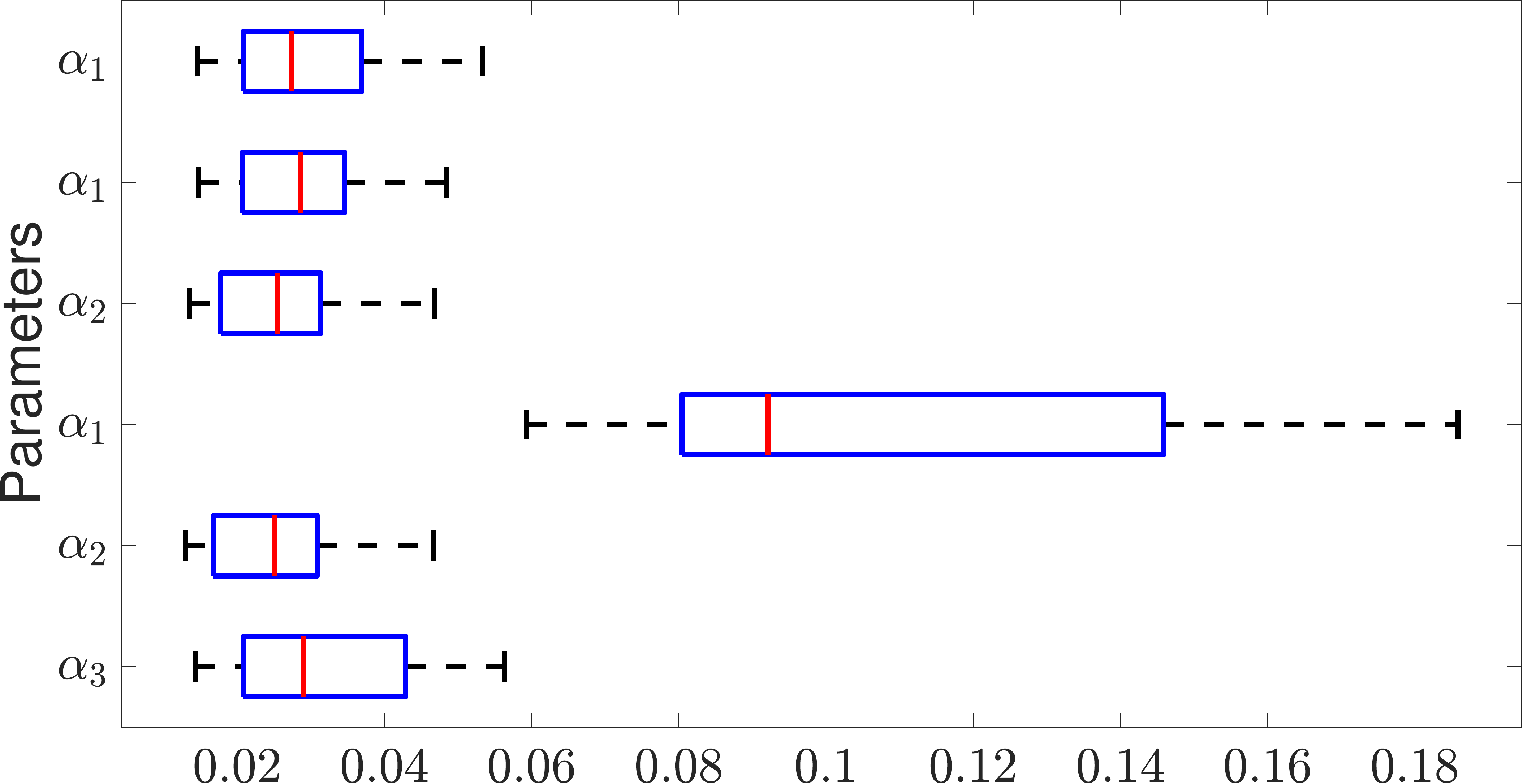}}
\caption{\inserted{Parameters and corresponding relative errors from the $\mathtt{UPRE}$ method as the number of windows is increased from one to three. \Cref{fig:UPRE Errors} shows that there is little benefit in using an increasing number of windows. \Cref{fig:UPRE Parameters} shows that past two windows, the new regularization parameters are more variable. For both plots, logarithmically spaced windows were used with $\xi = 4$, $\bfL = \bfL_2$, and an SNR of 40.}}
\label{fig:UPRE Parameters and Errors}
\end{figure}


The results presented in \cite{ChEaOl:11} \inserted{also} suggested that there is little to be gained when using more than two windows, even when using the learning approach, method $\mathtt{MSE}$, to find the parameters. 
On the other hand, the presented framework is valid for more windows, should there be situations in which the use of two windows seems insufficient based on numerical experiments. 
It should be noted also, that when using the MD windowed $\mathtt{MDP}$ method, there is an additional tuning safety parameter, which is required and makes the presentation of results for the $\mathtt{MDP}$ method much less interesting, see \cite{Byrne}.

\begin{figure}[htbp]
\centering
\includegraphics[width=0.9\textwidth]{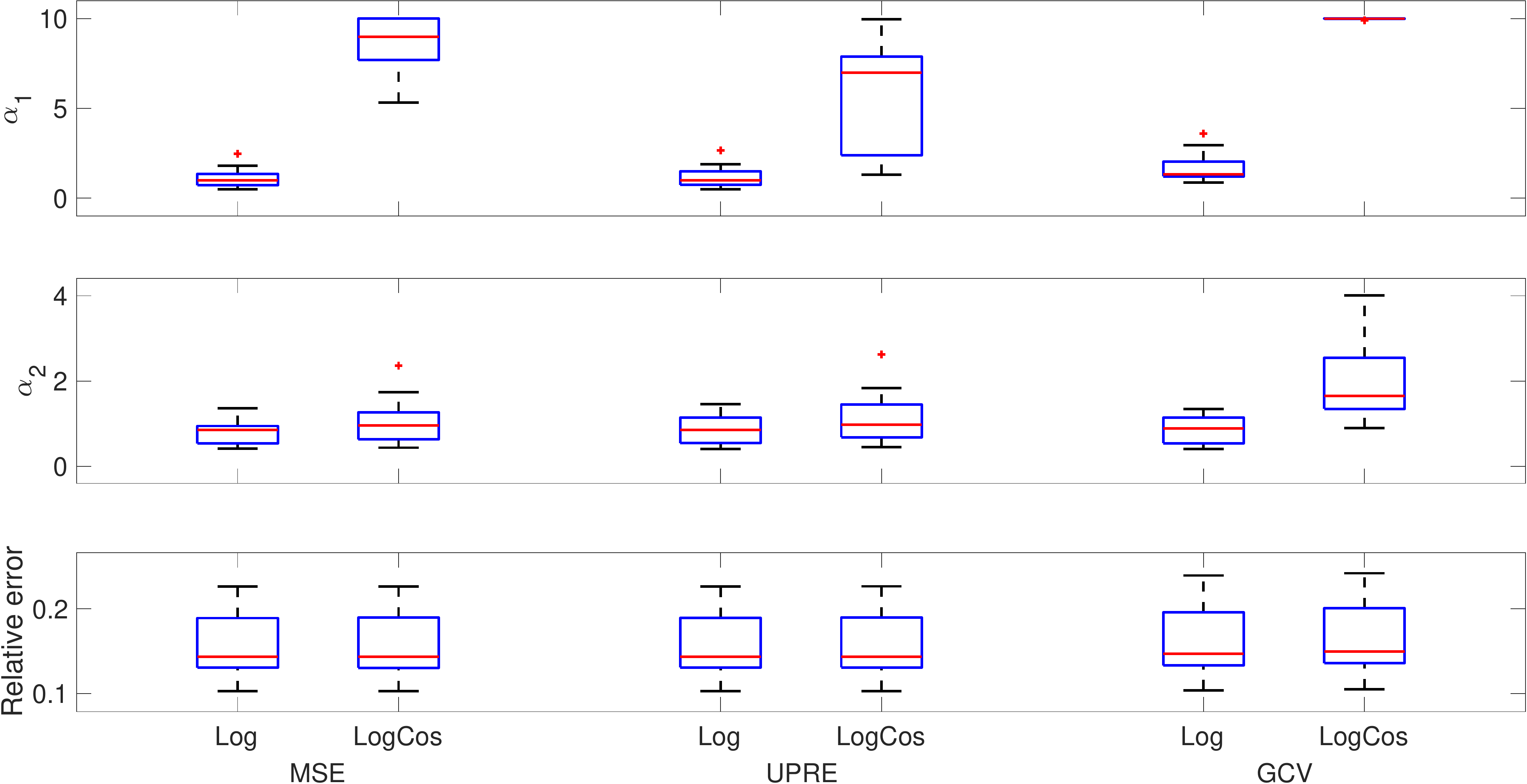}
\caption{\inserted{Parameters and corresponding relative errors obtained from using logarithmic vs logarithmic-cosine windows with the $\mathtt{MSE}$, $\mathtt{UPRE}$, and $\mathtt{GCV}$ methods. In the case of the logarithmic (non-overlapping) windows, the independent versions of the $\mathtt{UPRE}$ and $\mathtt{GCV}$ functions were used, \cref{eq:UPREwinsepR,eq:GCV_Big Decoupled}, respectively. For both window versions, $\xi = 4$, $\bfL = \bfL_2$, and an SNR of 10 was used.}}
\label{fig:Boxplots_LogvsLogCos_v4_SNR10_L}
\end{figure} 

The use of overlapping or non-overlapping windows influences the degree of interdependence between the two parameters.   \Cref{fig:Boxplots_LogvsLogCos_v4_SNR10_L} presents the results of using overlapping and non-overlapping logarithmic windows with $\bfL = \bfL_2$. When using non-overlapping windows, the ranges of both parameters are smaller than those for non-overlapping windows. For overlapping windows, the behavior of $\regparam_1$ exhibited in \cref{fig:Boxplots_LogvsLogCos_v4_SNR10_L} shows the parameters grouping near $10$. The grouping behavior can be explained by the choice of an upper bound during the minimization process; in the case of \cref{fig:Boxplots_LogvsLogCos_v4_SNR10_L}, the upper bound was chosen near $10$. The calculated gradients of the $F_{\substack{\text{$\mathtt{MSE}$} \\ \mathtt{win}}}(\regparamVec)$, $F_{\substack{\text{$\mathtt{UPRE}$} \\ \mathtt{win}}}(\regparamVec)$, and $F_{\substack{\text{$\mathtt{GCV}$} \\ \mathtt{win}}}(\regparamVec)$ are too small to resolve a minimum in the direction of $\regparam_2$ and thus the minimization process determines the minimizers near the specified boundary. However, using overlapping windows also increased the magnitude of $\regparam_2$, most significantly in the case of the $\mathtt{GCV}$ method.

In regards to the MD windowed methods, which select $P$ parameters using $R$ data sets, the parameters converge as $R$ increases. \Cref{tab:All Relative Errors v36_SNR10_Identity} details the mean percent relative errors of solutions obtain using parameters from each MD windowed method, where one and two (both overlapping and non-overlapping) windows were used. Even for the limited number of training sets ($2$ through $8$), the errors decrease as $R$ increases. \inserted{For most numerical configurations tested, the use of overlapping vs non-overlapping windows provides minor benefit with regards to the relative errors of the regularized solutions.}


\begin{table}[htbp]
  \begin{center}
    \caption{\inserted{Averaged percent relative errors of the MD windowed regularized solutions for $\xi = 36$ and an SNR of $10$ with one window, two linearly spaced windows and two linearly spaced cosine windows with the identity penalty matrix. The result with least error for given $R$, method, and validation set is highlighted in bold face.}}
    \label{tab:All Relative Errors v36_SNR10_Identity}
    \resizebox{\textwidth}{!}{%
    \begin{tabular}{|c|c|c|c|c|c|c|c|c|c|c|}
    \hline
    \multirow{2}{*}{$R$} & \multirow{2}{*}{$\mathtt{Win}$} & \multicolumn{3}{|c|}{Training} & \multicolumn{3}{|c|}{Validation 1} & \multicolumn{3}{|c|}{Validation 2} \\
    \cline{3-11}
     & & $\mathtt{MSE}$ & $\mathtt{UPRE}$ & $\mathtt{GCV}$ & $\mathtt{MSE}$ & $\mathtt{UPRE}$ & $\mathtt{GCV}$ & $\mathtt{MSE}$ & $\mathtt{UPRE}$ & $\mathtt{GCV}$ \\
    \hline
    \multirow{3}{*}{$2$} & $\mathtt{None}$ & $\mathbf{21.32}$ & 25.83 & 25.87 & $\mathbf{23.76}$ & 27.58 & 27.62 & $\mathbf{17.57}$ & 23.27 & 23.32 \\
    \cline{2-11}
    & $\mathtt{Lin}$ & 19.64 & $\mathbf{19.54}$ & $\mathbf{19.54}$ & 22.85 & $\mathbf{22.67}$ & $\mathbf{22.67}$ & 14.94 & $\mathbf{14.88}$ & 14.89 \\
    \cline{2-11}
    & $\mathtt{LinCos}$ & $\mathbf{19.54}$ & 19.93 & 19.94 & $\mathbf{22.83}$ & 23.38 & 23.39 & $\mathbf{14.78}$ & 15.10 & 15.11 \\
    \hline
    \multirow{3}{*}{$4$} & $\mathtt{None}$ & $\mathbf{21.29}$ & 26.09 & 26.12 & $\mathbf{23.71}$ & 27.82 & 27.86 & $\mathbf{17.58}$ & 23.57 & 23.62 \\
    \cline{2-11}
    & $\mathtt{Lin}$ & $\mathbf{19.44}$ & 19.53 & 19.55 & 22.39 & $\mathbf{22.36}$ & 22.37 & $\mathbf{14.93}$ & 15.16 & 15.17 \\
    \cline{2-11}
    & $\mathtt{LinCos}$ & $\mathbf{19.32}$ & 19.33 & 19.94 & 22.32 & $\mathbf{22.29}$ & 23.39 & $\mathbf{14.75}$ & 14.79 & 15.11 \\
    \hline
    \multirow{3}{*}{$6$} & $\mathtt{None}$ & $\mathbf{21.29}$ & 26.03 & 26.07 & $\mathbf{23.71}$ & 27.77 & 27.81 & $\mathbf{17.58}$ & 23.51 & 23.56 \\
    \cline{2-11}
    & $\mathtt{Lin}$ & $\mathbf{19.44}$ & 19.49 & 19.50 & 22.41 & $\mathbf{22.36}$ & 22.37 & $\mathbf{14.91}$ & 15.05 & 15.07 \\
    \cline{2-11}
    & $\mathtt{LinCos}$ & $\mathbf{19.32}$ & $\mathbf{19.32}$ & 19.94 & $\mathbf{22.34}$ & $\mathbf{22.34}$ & 23.39 & 14.73 & $\mathbf{14.72}$ & 15.11 \\
    \hline
    \multirow{3}{*}{$8$} & $\mathtt{None}$ & $\mathbf{21.29}$ & 26.02 & 26.06 & $\mathbf{23.71}$ & 27.76 & 27.80 & $\mathbf{17.57}$ & 23.50 & 23.55 \\
    \cline{2-11}
    & $\mathtt{Lin}$ & $\mathbf{19.44}$ & 19.48 & 19.49 & 22.41 & $\mathbf{22.36}$ & 22.37 & $\mathbf{14.90}$ & 15.04 & 15.06 \\
    \cline{2-11}
    & $\mathtt{LinCos}$ & $\mathbf{19.32}$ & $\mathbf{19.32}$ & 19.94 & $\mathbf{22.34}$ & $\mathbf{22.34}$ & 23.39 & $\mathbf{14.72}$ & $\mathbf{14.72}$ & 15.11 \\
    \hline
    \end{tabular}}
  \end{center}
\end{table}

It is interesting to note that the average relative errors of solutions obtained for parameters applied to the second validation set (\cref{fig:Validation Set 2}) were less than those of either the training or first validation set. 
\inserted{The superior (reduced) errors calculated for the second validation set are consistent throughout most numerical configurations. Additionally, the relative errors for the second validation set show  greater variability than those for either the training,  or first validation, set. Furthermore,  the relative errors are indeed least in each case when training is performed using known data, namely with the \texttt{MSE}, but the results with both \texttt{UPRE} and \texttt{GCV} learning methods are not significantly larger  when using windowed regularization. This demonstrates that windowed regularization parameters can be learned from training data without knowledge of the true solutions. The results obtained using \texttt{UPRE} are in most cases slightly improved as compared to those using  \texttt{GCV}, and hence \texttt{UPRE} would be preferred if information about the noise in the data is available. Finally, to illustrate the performance of the approach, 
\cref{fig:Sample_Solutions} presents two examples of images from the second validation set that have differing relative errors.}
\begin{figure}[htbp]
\centering
\includegraphics[width=0.9\textwidth]{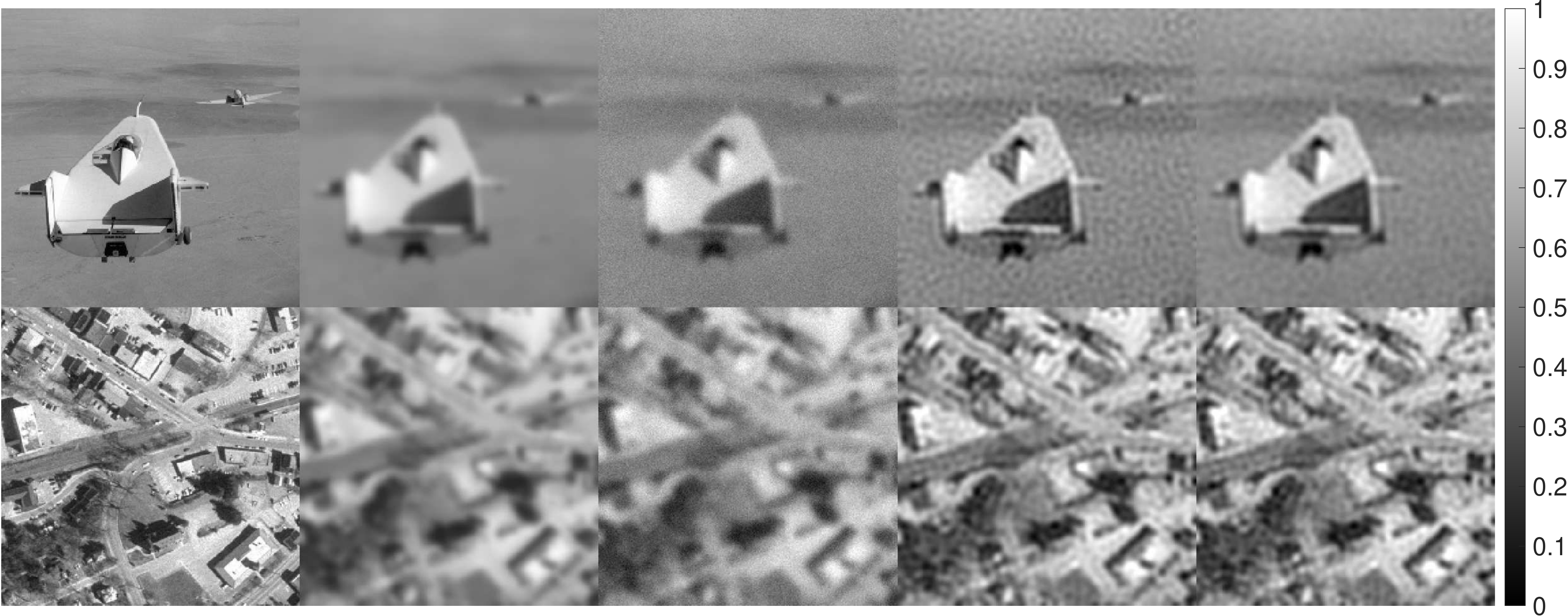}
\caption{\inserted{Two samples from the second validation set, with $\xi = 36$, an SNR of $25$, two log cosine windows and the Laplacian penalty matrix. From left to right for each sample are the true solution, the blurred image, the blurred image after noise was added, the regularized solution obtained using the MD windowed $\mathtt{UPRE}$ method with $R = 8$ (the entire training set), and the regularized solutions using parameters that are optimal for the individual image. The MD windowed $\mathtt{UPRE}$ solutions have relative errors of $8.55\%$ and $14.81\%$ for the top and bottom samples, respectively, while the optimal solutions have relative errors of $8.04\%$ and $14.80\%$.}}
\label{fig:Sample_Solutions}
\end{figure}

\section{Conclusions}\label{sec:Conclusion}
We have shown that the $\mathtt{UPRE}$ and $\mathtt{GCV}$ methods can be extended to accommodate regularization parameter estimation using multiple data sets and for both single and windowed regularization parameters, for generalized Tikhonov regularization. The $\mathtt{UPRE}$ is a  representative estimator that assumes the knowledge of the variance of mean zero Gaussian noise in the data, while no additional assumptions are required for the $\mathtt{GCV}$ estimator.  The most general forms of functions associated with these methods are given in \Cref{thm:Main UPRE Result} and  \Cref{thm:WindowedGCV}. While the corresponding function for the MD windowed $\mathtt{UPRE}$  can be written as an average of the individual functions associated with each data set, this is not possible for the MD windowed $\mathtt{GCV}$. Moreover, the $\mathtt{GCV}$ estimator for windowed regularization parameters when derived from first principles is more complex, and unlike the $\mathtt{UPRE}$ case, does not yield a separable form when non-overlapping windows are applied. Still, neither of these  MD windowed methods require knowledge of true solutions unlike the learning approach defined by \cref{eq:Learning function}. The presented numerical experiments for  $2$D signal restoration  demonstrate that the MD windowed methods can perform competitively with the learning approach that requires knowledge of true signals for training from data. Further, it is also demonstrated that the parameters obtained from a specific training set of validation images can also be used for a set of different testing images, provided that the general noise characteristics are the same. \edited{Varying the noise characteristics of the training and/or validation sets could be an interesting approach for future investigation.}

\edited{We note that the approach discussed here will extend immediately for any estimator which relies only on an approximation for the regularized residual and the trace of the data resolution matrix, as is the case for  $\mathtt{UPRE}$, provided that the derivation from first principles still leads to an equivalent formulation for the function that should be minimized. The general idea can be modified to address estimators requiring other terms, such as an augmented regularized residual used for the $\chi^2$ estimator described in \cite{mead:08,mere:09}, and as already given in \cite{Byrne} for the $\mathtt{MDP}$ method. Although the implementation is presented for the case in which there is a known mutual decomposition of the model and penalty matrices ($\bfA$ and $\bfL$) the approach can be extended for any iterative method which yields suitable estimates, e.g. \cite{ChNaOl:08,RVA:15,saeed6,saeed7}. In particular, the assumption of a mutual diagonalization is only a simplifying assumption that is useful computationally for the $\mathtt{UPRE}$ method. Consequently, the approach here can also be used with other Kronecker product representations for the model and penalty matrices, or for Krylov methods that can be used to provide a spectral representation of the two operators. Such extensions would be a topic for future research, as would a true multi penalty regularization using multiple data sets for learning parameters to meet conditions for $\mathtt{UPRE}$. We conclude that  the results are particularly helpful in demonstrating that $\mathtt{UPRE}$ offers a major advantage, as compared to $\mathtt{GCV}$;  the derivation from first principles yields a standard $\mathtt{UPRE}$ function, whereas the $\mathtt{GCV}$ does not.  This suggests that  $\mathtt{UPRE}$ will be of more general use in conjunction with iterative solvers replacing the direct solves, which is not the case for the $\mathtt{GCV}$ formulation. Moreover, $\mathtt{UPRE}$ is immediately separable for non-overlapping windows but  $\mathtt{GCV}$ is not.}

\appendix

\bibliographystyle{plain}
\bibliography{ParameterEstimation}

\begin{thebibliography}{10}

\bibitem{Afkham_2021}
Babak~Maboudi Afkham, Julianne Chung, and Matthias Chung.
\newblock Learning regularization parameters of inverse problems via deep
  neural networks.
\newblock {\em Inverse Problems}, 37(10):105017, sep 2021.

\bibitem{Arridge2019SolvingIP}
S.~Arridge, P.~Maass, O.~{\"O}ktem, and Carola-Bibiane Sch{\"o}nlieb.
\newblock Solving inverse problems using data-driven models.
\newblock {\em Acta Numerica}, 28:1--174, 2019.

\bibitem{ABT}
R~C Aster, B~Borchers, and C~H Thurber.
\newblock {\em Parameter Estimation and Inverse Problems}.
\newblock Elsevier, Amsterdam, 2rd edition, 2013.

\bibitem{BelgeKilmerMiller2002}
Murat Belge, Misha~E Kilmer, and Eric~L Miller.
\newblock Efficient determination of multiple regularization parameters in a
  generalized {L}-curve framework.
\newblock {\em Inverse Problems}, 18:1161--1183, July 2002.

\bibitem{BellSejnowski}
Anthony~J. Bell and Terrence~J. Sejnowski.
\newblock The ``independent components" of natural scenes are edge filters.
\newblock {\em Vision Research}, 37:3327--3338, December 1997.

\bibitem{GeoscienceML}
Karianne~J Bergen, Paul~A Johnson, Maarten~V de~Hoop, and Gregory~C Beroza.
\newblock Machine learning for data-driven discovery in solid earth geoscience.
\newblock {\em Science}, 363, March 2019.

\bibitem{Brezinski2003}
C.~Brezinski, M.~Redivo-Zaglia, G.~Rodriguez, and S.~Seatzu.
\newblock Multi-parameter regularization techniques for ill-conditioned linear
  systems.
\newblock {\em Numerische Mathematik}, 94:203--228, 2003.

\bibitem{Byrne}
Michael~J. Byrne.
\newblock {\em Adaptive methods for the selection of regularization
  parameters}.
\newblock PhD thesis, Arizona State University, January 2022.

\bibitem{ChungChungOLeary2011}
J~Chung, M~Chung, and D~P O'Leary.
\newblock Designing optimal spectral filters for inverse problems.
\newblock {\em SIAM Journal on Scientific Computing}, 33:3131--3135, 2011.

\bibitem{ChungEasleyOLeary}
Julianne Chung, Glenn Easley, and Dianne O'leary.
\newblock Windowed spectral regularization of inverse problems.
\newblock {\em SIAM J. Scientific Computing}, 33:3175--3200, 01 2011.

\bibitem{ChEaOl:11}
Julianne Chung, Glenn Easley, and Dianne~P. O'Leary.
\newblock Windowed spectral regularization of inverse problems.
\newblock {\em SIAM Journal on Scientific Computing}, 33(6):3175--3200, 2011.

\bibitem{ChungEspanol2017}
Julianne Chung and Malena~I Espa{\~n}ol.
\newblock Learning regularization parameters for general-form {T}ikhonov.
\newblock {\em Inverse Problems}, 33(7):074004, 2017.

\bibitem{ChNaOl:08}
Julianne Chung, James~G Nagy, and Dianne~P O'Leary.
\newblock A weighted {GCV} method for {L}anczos hybrid regularization.
\newblock {\em Electronic Transactions on Numerical Analysis}, 28:149--167,
  2008.

\bibitem{ChungKilmerOLeary}
Julianne~M. Chung, Misha~E. Kilmer, and Dianne~P. O'Leary.
\newblock A framework for regularization via operator approximation.
\newblock {\em SIAM Journal on Scientific Computing}, 37(2):B332--B359, 2015.

\bibitem{DebnathMikusinski2005}
Lokenath Debnath and Piotr Mikusi\'{n}ski.
\newblock {\em Introduction to Hilbert Spaces with Applications}.
\newblock Elsevier, 3rd edition, September 2005.

\bibitem{EasleyLabatePatel}
Glenn~R. Easley, Demetrio Labate, and Vishal~M. Patel.
\newblock Directional multiscale processing of images using wavelets with
  composite dilations.
\newblock {\em Journal of Mathematical Imaging and Vision}, 48:13--34, January
  2014.

\bibitem{IRTools}
Silvia Gazzola, Per~Christian Hansen, and James~G. Nagy.
\newblock {IR Tools}: A {MATLAB} package for iterative regularization methods
  and large-scale problems.
\newblock {\em Numerical Algorithms}, 81:773--811, July 2019.

\bibitem{GazzolaNovati2013}
Silvia Gazzola and Paolo Novati.
\newblock Multi-parameter {A}rnoldi-{T}ikhonov methods.
\newblock {\em Electronic Transactions on Numerical Analysis}, 40:452--475,
  2013.

\bibitem{GoHeWa}
Gene~H Golub, Michael Heath, and Grace Wahba.
\newblock Generalized cross-validation as a method for choosing a good ridge
  parameter.
\newblock {\em Technometrics}, 21(2):215--223, 1979.

\bibitem{GoLo:96}
Gene~H. Golub and Charles~F. van Loan.
\newblock {\em {Matrix computations}}.
\newblock Johns Hopkins Press, Baltimore, 3rd edition, 1996.

\bibitem{GolubVanLoan2013}
G.H. Golub and C.F. Van~Loan.
\newblock {\em Matrix Computations}.
\newblock Johns Hopkins Studies in the Mathematical Sciences. Johns Hopkins
  University Press, 2013.

\bibitem{HaberTenorio2003}
E~Haber and L~Tenorio.
\newblock Learning regularization functionals-a supervised training approach.
\newblock {\em Inverse Problems}, 19:611--626, June 2003.

\bibitem{Hansen1992}
Per~Christian Hansen.
\newblock Analysis of discrete ill-posed problems by means of the {L}-curve.
\newblock {\em SIAM Review}, 34:561--580, 1992.

\bibitem{Hansen:98}
Per~Christian Hansen.
\newblock {\em Rank-Deficient and Discrete Ill-Posed Problems}.
\newblock Society for Industrial and Applied Mathematics, Philadelphia, 1998.

\bibitem{HansenNagyOLeary}
Per~Christian Hansen, James~G Nagy, and Dianne~P O'Leary.
\newblock {\em Deblurring Images: Matrices, Spectra, and Filtering}.
\newblock Fundamentals of Algorithms. Society for Industrial and Applied
  Mathematics, Philadelphia, 2006.

\bibitem{HansenOLeary}
Per~Christian Hansen and Dianna~P. O'Leary.
\newblock The use of the {L}-curve in the regularization of discrete ill-posed
  problems.
\newblock {\em SIAM J. Sci. Comput.}, 14:1487--1503, 1993.

\bibitem{Holler2020LearningNR}
Gernot Holler and Karl Kunisch.
\newblock Learning nonlocal regularization operators, January 2020.
\newblock Optimization and Control: arXiv:2001.09092.

\bibitem{StatLearning}
Gareth James, Daniela Witten, Trevor Hastie, and Robert Tibshirani.
\newblock {\em An Introduction to Statistical Learning with Applications in R}.
\newblock Springer, 2013.

\bibitem{KalkeSiltanen}
Martti Kalke and Samuli Siltanen.
\newblock Adaptive frequency-domain regularization for sparse-data tomography.
\newblock {\em Inverse Problems in Science and Engineering}, 21:1099--1124, 10
  2013.

\bibitem{KunischPock2013}
Karl Kunisch and Thomas Pock.
\newblock A bilevel optimization approach for parameter learning in variational
  models.
\newblock {\em SIAM Journal on Imaging Sciences}, 6(2):938--983, 2013.

\bibitem{LeVeque2007}
Randall~J LeVeque.
\newblock {\em Finite Difference Methods for Ordinary and Partial Differential
  Equations: Steady-State and Time-Dependent Problems}.
\newblock SIAM, 2007.

\bibitem{LuPereverzev2011}
S~Lu and S~V Pereverzev.
\newblock Multi-parameter regularization and its numerical realization.
\newblock {\em Numerische Mathematik}, 118:1--31, 2011.

\bibitem{Mallows1973}
Colin~Lingwood Mallows.
\newblock Some comments on $c_p$.
\newblock {\em Technometrics}, 15(4):661--675, November 1973.

\bibitem{mead:08}
Jodi~L Mead.
\newblock Parameter estimation: A new approach to weighting a priori
  information.
\newblock {\em Journal of Inverse and Ill-posed Problems}, 16:175--193, 2008.

\bibitem{mere:09}
Jodi~L Mead and Rosemary~A Renaut.
\newblock A {N}ewton root-finding algorithm for estimating the regularization
  parameter for solving ill-conditioned least squares problems.
\newblock {\em Inverse Problems}, 25(2):025002, 2009.

\bibitem{ModarresiGolub1}
Kourosh Modarresi and Gene Golub.
\newblock Multi-level approach to numerical solution of inverse problems.
\newblock In {\em CSC 2007: SIAM Workshop on Combinatorial Scientific
  Computing}. IEEE Computer Society, 2007.

\bibitem{ModarresiGolub2}
Kourosh Modarresi and Gene Golub.
\newblock Using multiple generalized cross-validation as a method for varying
  smoothing effects.
\newblock In {\em CSC 2007: SIAM Workshop on Combinatorial Scientific
  Computing}. SIAM, IEEE Computer Society, 2007.

\bibitem{Morozov1966}
Vladimir~Alekseevich Morozov.
\newblock On the solution of functional equations by the method of
  regularization.
\newblock {\em Soviet Mathematics Doklady}, 7:414--417, 1966.

\bibitem{MESSENGER}
NASA and {JPL-Caltech}.
\newblock {Photojournal}: {M}ercury, 2016.

\bibitem{NeumannDCT}
Michael~K. Ng, Raymond~H. Chan, and Wun-Cheung Tang.
\newblock A fast algorithm for deblurring models with {N}eumann boundary
  conditions.
\newblock {\em SIAM Journal on Scientific Computing}, 21(3):851--866, 1999.

\bibitem{PaigeSau1}
Christopher~C. Paige and Michael~A. Saunders.
\newblock Towards a generalized singular value decomposition.
\newblock {\em SIAM Journal on Numerical Analysis}, 18(3):398--405, 1981.

\bibitem{renaut-1998-msls}
R.~A. Renaut.
\newblock A parallel multisplitting solution of the least squares problem.
\newblock {\em Numerical Linear Algebra with Applications}, pages 11--31, 1998.

\bibitem{RVA:15}
R.~A. Renaut, S.~Vatankhah, and V.~E. Ardestani.
\newblock Hybrid and iteratively reweighted regularization by unbiased
  predictive risk and weighted {GCV} for projected systems.
\newblock {\em SIAM J. Sci. Comput.}, 39:B221--B243., 2017.

\bibitem{RLG}
Rosemary~A. Renaut, Youzuo Lin, and Hongbin Guo.
\newblock Multisplitting for regularized least squares with {K}rylov subspace
  recycling.
\newblock {\em Numerical Linear Algebra with Applications}, 19(4):655--676,
  2012.

\bibitem{MedicineML}
Jenni A~M Sidey-Gibbons and Chris~J. Sidey-Gibbons.
\newblock Machine learning in medicine: a practical introduction.
\newblock {\em BMC Medical Research Methodology}, 19, 2019.

\bibitem{StephanakisKollias}
I.~M. {Stephanakis} and S.~{Kollias}.
\newblock Generalized-cross-validation estimation of the regularization
  parameters of the subbands in wavelet domain regularized image restoration.
\newblock In {\em Conference Record of Thirty-Second Asilomar Conference on
  Signals, Systems and Computers (Cat. No.98CH36284)}, volume~2, pages 938--940
  vol.2, 1998.

\bibitem{Strang1999}
Gilbert Strang.
\newblock The discrete cosine transform.
\newblock {\em SIAM Review}, 41(1):135--147, January 1999.

\bibitem{TaroudakiOLeary2015}
Viktoria Taroudaki and Dianne~P. O'Leary.
\newblock Near-optimal spectral filtering and error estimation for solving
  ill-posed problems.
\newblock {\em SIAM Journal on Scientific Computing}, 37(6):A2947--A2968, 2015.

\bibitem{Tikh1963}
A.~N. Tikhonov.
\newblock Regularization of incorrectly posed problems.
\newblock {\em Soviet Mathematics Doklady}, 4:1624--1627, 1963.

\bibitem{saeed6}
Saeed Vatankhah, Rosemary~A. Renaut, and Vahid~E. Ardestani.
\newblock A fast algorithm for regularized focused {3-D} inversion of gravity
  data using the randomized {SVD}.
\newblock {\em Geophysics}, 83::G25--G34, 2018.

\bibitem{saeed7}
Saeed Vatankhah, Rosemary~A Renaut, and Vahid~E Ardestani.
\newblock Total variation regularization of the 3-d gravity inverse problem
  using a randomized generalized singular value decomposition.
\newblock {\em Geophysical Journal International}, 213(1):695--705, 2018.

\bibitem{Learning2005}
E~De Vito, L~Rosasco, A~Caponnetto, U~Giovannini, and F~Odone.
\newblock Learning from examples as an inverse problem.
\newblock {\em Journal of Machine Learning Research}, 6:883--904, 2005.

\bibitem{Vogel:2002}
Curt Vogel.
\newblock {\em Computational Methods for Inverse Problems}.
\newblock Society for Industrial and Applied Mathematics, Philadelphia, 2002.

\bibitem{Wahba1977}
Grace Wahba.
\newblock Practical approximate solutions to linear operator equations when the
  data are noisy.
\newblock {\em SIAM Journal on Numerical Analysis}, 14(4):651--667, September
  1977.

\bibitem{Wahba1990}
Grace Wahba.
\newblock {\em Spline Models for Observational Data}, chapter Estimating the
  Smoothing Parameter, pages 52--62.
\newblock CBMS-NSF Regional Conference Series in Applied Mathematics. SIAM,
  1990.

\bibitem{Wang2012}
Z~Wang.
\newblock Multi-parameter {T}ikhonov regularization and model function approach
  to the damped {M}orozov principle for choosing regularization parameters.
\newblock {\em Journal of Computational and Applied Mathematics},
  236:1815--1832, 2012.

\bibitem{Wood2002}
S~N Wood.
\newblock Modelling and smoothing parameter estimation with multiple quadratic
  penalties.
\newblock {\em Journal of the Royal Statistical Society: Series B},
  62:413--428, January 2002.

\bibitem{Zobitz2020EfficientHD}
J.~M. Zobitz, Tristan Quaife, and Nancy~K. Nichols.
\newblock Efficient hyper-parameter determination for regularised linear {BRDF}
  parameter retrieval.
\newblock {\em International Journal of Remote Sensing}, 41:1437 -- 1457, 2020.

\end{thebibliography}
\section{The Unbiased Predictive Risk Estimator for Windowed Tikhonov}\label{app:UPRE}
We  briefly describe the derivation of the $\mathtt{UPRE}$ in the context of learning regularization parameters from multiple data sets and using spectral windowing. This mimics the derivation in  \cite[p.~98]{Vogel:2002} but in all cases assumes that the matrices and vectors are for multiple data sets and that regularization parameters are defined with respect to spectral windows.  For ease of notation we drop the  $\widetilde{\bfA}$ notation, and we drop the dependence on $\regparamVec$ in all terms. Also we assume that the noise in the measured data   is independent so that the covariance matrix for the noise, $\bfSigma$, is diagonal, where $\noiseVec \sim \mathcal{N}(0, \bfSigma)$. We now proceed to obtain the proof of \Cref{thm:Main UPRE Result} from first principles. 
 \begin{proof} 
 Using $\xReg=\AWin^\sharp \dVec$  we have  
 \begin{align*}
     \rVec(\regparamVec)&= (\bfA\AWin^\sharp-\bfeye_M)\dVec  = (\bfA\AWin^\sharp-\bfeye_M)\bVec + (\bfA\AWin^\sharp-\bfeye_M)\noiseVec \text{ and }\\
     \pVec(\regparamVec) &= (\bfA \AWin^\sharp -\bfeye_M) \bVec + \bfA \AWin^\sharp\noiseVec.
 \end{align*}
Here  $\bfeye_M$ is defined consistently for the size of the problem, $M=\sum_{r=1}^Rm_r$. In both cases we have a linear combination  between $\fVec=(\bfA\AWin^\sharp-\bfeye_M)\bVec$ which is deterministic and a noise term $\bfB\noiseVec$, where $\bfB=\bfA \AWin^\sharp$ or $\bfB=(\bfA \AWin^\sharp-\bfeye_M)$.  Now by the Trace Lemma \cite[Lemma 7.2]{Vogel:2002} we have 
 \begin{align}\label{eq:trace}
     E(\|\fVec+ \bfB\noiseVec\|_2^2)= E(\|\fVec\|_2^2)+ \trace(\trans{\bfB}\bfB\bfSigma).
 \end{align}
  Here $E(a)$ denotes the expectation of $a$. Thus, applying \cref{eq:trace} twice  we have 
 \begin{align*}
    E(\|\rVec(\regparamVec)\|_2^2)&= E(\|\fVec\|_2^2 ) +\trace(\trans{(\bfA\AWin^\sharp)}\bfA\AWin^\sharp\bfSigma) \\
    &\quad \quad + \trace(\bfSigma) -  \trace(\bfA\AWin^\sharp\bfSigma) -\trace(\trans{(\bfA\AWin^\sharp)}\bfSigma) \text{ and }
    \\ 
 E(\|\pVec(\regparamVec)\|_2^2)& =E( \|\fVec\|_2^2) + \trace(\trans{(\bfA\AWin^\sharp)}\bfA\AWin^\sharp\bfSigma).
 \end{align*}
 Immediately, approximating $E(\|\rVec(\regparamVec)\|_2^2)$ by $\|\rVec(\regparamVec)\|_2^2$ as in the single parameter derivation, and assuming that  $\bfA\AWin^\sharp$ is symmetric, yields the estimator 
\begin{align}\label{eq:UPREwin}
     E(\frac{1}{M}\|\pVec(\regparamVec)\|_2^2)
     &\approx \frac{1}{M}\left(\|\rVec(\regparamVec)\|_2^2 -\trace(\bfSigma)+  2\,\trace(\bfA\AWin^\sharp\bfSigma)\right).
 \end{align}
For  multiple data sets the system matrices are block diagonal and the systems  decouple. Hence  $\|\rVec(\regparamVec)\|_2^2 = \sum_{r=1}^R \|\rVec^{(r)}(\regparamVec)\|_2^2$ and $$\trace(\bfA\AWin^\sharp\bfSigma)= \sum_{r=1}^R \trace(\bfA^{(r)}(\AWin^{(r)})^\sharp\bfSigma_r).$$
 Therefore, 
  \begin{align*}
     E(\frac{1}{M}\|\pVec(\regparamVec)\|_2^2) &\approx 
     \frac{1}{M}\sum_{r=1}^R  \left(\|\rVec^{(r)}(\regparamVec)\|_2^2 -\trace(\bfSigma_r)+2\,  \trace(\bfA^{(r)}(\AWin^{(r)})^\sharp\bfSigma_r) \right) \\
     &=\frac{1}{M}\sum_{r=1}^R m_r\U^{(r)}(\regparamVec), 
     \end{align*}
where the last equality results by applying \cref{eq:UPREwin} for each system $r$. 
As a consequence we have the proof of the estimator given in \Cref{thm:Main UPRE Result} under the assumption that $\bfA\AWin^\sharp$ is symmetric
\begin{align*}
 E(\frac{1}{M}\|\pVec(\regparamVec)\|_2^2) &\approx 
     \frac{1}{M}\sum_{r=1}^R  \left(\|\rVec^{(r)}(\regparamVec)\|_2^2 -\trace(\bfSigma_r) 
     +  2\trace(\bfA^{(r)}(\AWin^{(r)})^\sharp\bfSigma_r)\right).  
\end{align*}
\end{proof}
When we have the GSVD for each system it is immediate, with or without  windowing, that  
\begin{align}\label{eq:AwinAwnsharp}
\bfA\AWin^\sharp= U  \sum_{p=1}^P\left(W^{(p)})^{1/2} \Delta\inv{(\trans{\Delta}\Delta+\alpha_p^2 \trans{\Lambda}\Lambda)} \trans{\Delta} (W^{(p)})^{1/2}\right) \trans{U},\end{align}  is symmetric for each block and/or window.

\section{Generalized Cross Validation for Windowed Tikhonov}\label{app:GCV}
We now consider the direct derivation from first principles for the generalized Tikhonov regularized problem.  Following the approach in \cite{ChEaOl:11}, and using \cref{eq:GSVD Normal Eq Sol}, we  consider the solution using the spectral domain coefficients
\begin{align}\label{eq:trans}
  \yVec= \trans{\bfX}\xVec(\regparam) &=   \bfPhi(\regparam)\pinv{\bfDelta}\hat{\dVec},
  \end{align}
  which corresponds to the solution of the normal equations
  \begin{align}
(\trans{\bfDelta}{\bfDelta}+ \regparam^2\trans{\bfLambda} {\bfLambda})\yVec 
  &= \trans{\bfDelta}\hat{\dVec},
 \end{align}
for the regularized problem 
 \begin{align}\label{eq:GCVnormalytildefull}
    \|\bfDelta \yVec -\hat{\dVec}\|_2^2 + \regparam^2\|\bfLambda{\yVec}\|_2^2.
\end{align} 
As in \cite{ChEaOl:11,GoHeWa}, we introduce $\bfC$ as the unitary matrix which diagonalizes the circulants and consider the new system $\bfG {\yVec}  \approx \bfC \hat{\dVec}$, where $\bfG=\bfC{\bm{\Delta}}$  is of size $\mA\times n$. For this new system $\yVec$ solves 
\begin{align*}
    (\trans{\bfG}{\bfG} + \regparam^2 \trans{\bfLambda} {\bfLambda}) \yVec = \trans{\bfG}\bfC{\hat{\dVec}},
\end{align*}
and the corresponding windowed solution is given by 
\begin{align}\label{eq:winTransform}
 \yWin = \sum_{p=1}^P \inv{(\trans{\bfG}{\bfG} + \regparam_p^2 \trans{\bfLambda} {\bfLambda})} \bfW^{(p)}\trans{\bfG} \bfC{\hat{\dVec}}.
\end{align}
Following \cite{ChEaOl:11} but consistent with our notation introduced in \cref{eq:normalequationsolution} applied for the solution in the spectral domain  
we introduce the data resolution matrices 
\begin{align*}
    {\bfG}(\regparam_p)&=\bfG\inv{(\trans{\bfG}{\bfG} + \regparam_p^2 \trans{\bfLambda} {\bfLambda})} \trans{\bfG}= \bfG \bfG^\sharp(\regparam_p) \text{ and  } \\
    \bfG_{\mathrm{win}^{(p)}}(\regparam_p)&= \bfG \inv{(\trans{\bfG}{\bfG} + \regparam_p^2 \trans{\bfLambda} {\bfLambda})}\bfW^{(p)} \trans{\bfG}=\bfG \bfG_{\mathrm{win}^{(p)}}^\sharp(\regparam_p).
\end{align*}
In the derivation we will need the diagonal entries related to these resolution matrices. 
\begin{lemma}[\label{lem:diag}Diagonal entries of resolution matrices]
The diagonal entries of the matrices $\bfeye_m-\bfG \bfG^\sharp(\regparam_p)$ and $\bfeye_m-\bfG \bfG_{\mathrm{win}^{(p)}}^\sharp(\regparam_p)$ denoted by $\mu^{(p)}$ and $\nu^{(p)}$ are given by 
\begin{align*}
    \mu^{(p)}&= \frac{1}{m}\left(m-n + \sum_{j=1}^n(1- \Phi_{jj}(\regparam_p))\right) \text{ and }\\
    \nu^{(p)}&= \frac{1}{m}\left(m-n + \sum_{j=1}^n(1- (\Phi_{\mathrm{win}^{(p)}})_{jj}(\regparam_p))\right).
\end{align*}
\end{lemma}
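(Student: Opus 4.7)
The plan is to compute the traces of $\bfeye_m - \bfG\bfG^{\sharp}(\regparam_p)$ and $\bfeye_m - \bfG\bfG^{\sharp}_{\mathrm{win}^{(p)}}(\regparam_p)$ directly from the GSVD structure, and then identify $\mu^{(p)}$ and $\nu^{(p)}$ with $\tfrac{1}{m}\trace(\cdot)$ in the two cases; this common value is the average diagonal entry that enters the Allen--Press derivation of the $\mathtt{GCV}$ function in \cite{GoHeWa}.

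First, using $\bfG = \bfC\bfDelta$ with $\bfC$ unitary, one has $\trans{\bfG}\bfG = \trans{\bfDelta}\bfDelta$ and
\[
\bfG\bfG^{\sharp}(\regparam_p) = \bfC\,\bfDelta\,\inv{(\trans{\bfDelta}\bfDelta + \regparam_p^{2}\trans{\bfLambda}\bfLambda)}\,\trans{\bfDelta}\,\trans{\bfC}.
\]
With $m \ge n$ the shift $k = (n-m)_{+}$ in \Cref{def.GSVD} vanishes, so $\bfDelta$ is concentrated on the leading $n \times n$ diagonal with entries $\delta_j$, and the middle inverse is the $n \times n$ diagonal matrix with entries $1/(\delta_j^{2} + \regparam_p^{2}\lambda_j^{2})$. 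Multiplying out then shows that $\bfDelta\,\inv{(\trans{\bfDelta}\bfDelta + \regparam_p^{2}\trans{\bfLambda}\bfLambda)}\,\trans{\bfDelta}$ is the $m \times m$ block-diagonal matrix whose leading $n \times n$ block is $\diag(\Phi_{jj}(\regparam_p))$ (via \cref{eq:Phientries}) and whose trailing $(m-n)\times(m-n)$ block vanishes. Cyclicity of the trace under conjugation by $\bfC$ then gives $\trace(\bfG\bfG^{\sharp}(\regparam_p)) = \sum_{j=1}^{n}\Phi_{jj}(\regparam_p)$, so
\[
\tfrac{1}{m}\trace\bigl(\bfeye_m - \bfG\bfG^{\sharp}(\regparam_p)\bigr) = \tfrac{1}{m}\Bigl((m-n) + \sum_{j=1}^{n}(1 - \Phi_{jj}(\regparam_p))\Bigr),
\]
matching the stated expression for $\mu^{(p)}$.

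The calculation for $\nu^{(p)}$ is essentially the same, except that the $n \times n$ diagonal weight $\bfW^{(p)}$ is inserted between the inverse factor and $\trans{\bfDelta}$, which scales the $j$th leading-block entry by $w_j^{(p)}$ to produce $\diag((\Phi_{\mathrm{win}^{(p)}})_{jj}(\regparam_p))$; the same trace-and-divide argument then gives the claimed form. The step I expect to require the most care is clarifying the wording of the lemma itself: the displayed right-hand sides are $\trace/m$, while the lemma refers to the singular ``diagonal entry''. The identification of a single scalar $\mu^{(p)}$ or $\nu^{(p)}$ with this average is precisely the content of the Allen--Press rotational invariance supplied by the circulant-diagonalizing role of $\bfC$, and it is this average that enters the subsequent derivation of the windowed $\mathtt{GCV}$ function in \Cref{thm:WindowedGCV}.
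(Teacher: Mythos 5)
Your proposal is correct and takes essentially the same route as the paper: using the GSVD structure you write $\bfG\bfG^\sharp(\regparam_p)$ as $\bfC$ times the $m\times m$ diagonal matrix whose leading $n\times n$ block is $\bfPhi(\regparam_p)$ (entries $w_j^{(p)}\Phi_{jj}(\regparam_p)$ in the windowed case, matching $(\bfPhi_{\mathrm{win}^{(p)}})_{jj}$) times $\bfC^*$, and then identify $\mu^{(p)}$ and $\nu^{(p)}$ with $\trace(\cdot)/m$, exactly as the paper does. The only step to make explicit is the one you flag at the end: since the conjugated matrix is diagonal and $\bfC$ is the unitary that diagonalizes circulants, $\bfeye_m-\bfG\bfG^\sharp(\regparam_p)$ (and its windowed analogue) is itself circulant and hence has \emph{equal} diagonal entries, which is precisely why the single scalar $\mu^{(p)}$ (resp.\ $\nu^{(p)}$) coincides with the average $\trace/m$; the paper states this circulant argument directly, and the resulting independence of $k$ is what is actually used in the Sherman--Morrison step of the proof of \Cref{thm:WindowedGCV}.
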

\begin{proof}
We substitute back in for $\bfG$ and note that the filter matrices are of sizes $n\times n$
\begin{align*}
\bfG \bfG^\sharp(\regparam_p) = \bfC\bfDelta\inv{(\trans{\bfDelta}{\bfDelta} + \regparam_p^2 \trans{\bfLambda} {\bfLambda})}\trans{\bfDelta}{\bfC}^*=\bfC
\left[\begin{array}{cc}\bfPhi(\regparam_p) & 0 \\0 & 0\end{array}\right]{\bfC}^*.
\end{align*}
Also 
\begin{align*}
\bfeye_m-\bfG \bfG^\sharp(\regparam_p) = \bfC \left(\bfeye_m-\left[\begin{array}{cc}\bfPhi(\regparam_p) & 0 \\0 & 0\end{array}\right] \right){\bfC}^* = \bfC \left[\begin{array}{cc}\bfeye_n-\bfPhi(\regparam_p) & 0 \\0 & \bfeye_{m-n}\end{array}\right] {\bfC}^*,
\end{align*}
is circulant since the inner part of this product is diagonal. Therefore, its diagonal entries are equal and we can write $\diag(\bfeye_m-\bfG \bfG^\sharp(\regparam_p))= {\mu^{(p)}} \bfeye_m$. Further, since $C$ is unitary 
\begin{align}\label{eq:tracewinp}
\mathrm{trace}(\bfeye_m-\bfG \bfG^\sharp(\regparam_p))&= \sum_{j=1}^n (1-\Phi_{jj}(\regparam_p))+(m-n),
\end{align}
and 
\begin{align*}
    {\mu^{(p)}} = \frac{1}{m}\left(m-n + \sum_{j=1}^n(1- \Phi_{jj}(\regparam_p))\right).
\end{align*}
The result for the diagonal entries of $\bfG_{\mathrm{win}^{(p)}}(\regparam_p)$ follows similarly.
\end{proof}
Equipped with this result, we now proceed with the proof of \Cref{thm:WindowedGCV}.
\begin{proof}
To obtain $\G(\regparamVec)$ relies on deriving the Allen PRESS estimates \cite{GoHeWa} 
\begin{align*}
    F(\regparamVec) = \frac1m\sum_{k=1}^m\left( (\bfC\hat{\dVec})_k - (\bfG\yWin^{(k)})_k\right)^2,
\end{align*}
where $\yWin^{(k)}$ is the solution of \cref{eq:winTransform} with the $k^{\mathrm{th}}$ equation removed and $(\bm{a})_k$ indicates the $k^{\mathrm{th}}$ component of a vector $\bm{a}$. To find $\yWin^{(k)}$, we remove row $k$ from matrix $\bfG$ when forming \cref{eq:winTransform}. As in \cite{ChEaOl:11} this is accomplished using multiplication by matrix $\bfE_k=\bfeye_m-\bfe_{k}{\trans{\bfe}_k}$. But now, from the form of $\bfG$ we obtain $\bfE_k\bfG=\bfE_k \bfC\bfDelta=(\bfeye_m - \bfe_{k}\trans{\bfe}_k)\bfC\bfDelta$. But $\trans{\bfe}_k\bfC= \trans{\bfc}_k$ is the $k^{\mathrm{th}}$ row of $\bfC$. Because $\bfC$ is unitary this gives  $\trans{\bfG} \trans{\bfE}_k\bfE_k \bfG = \trans{\bfDelta}\bfDelta- \trans{\bfDelta}\bfc_k\trans{\bfc}_k\bfDelta $. Therefore, 
\begin{align*}
    \trans{\bfG}\trans{\bfE}_k\bfE_k{\bfG} + \regparam_p^2 \trans{\bfLambda} {\bfLambda} = \trans{\bfDelta}\bfDelta + \regparam_p^2 \trans{\bfLambda}{\bfLambda} - \trans{\bfDelta}\bfc_k\trans{\bfc}_k \bfDelta= \bfH(\regparam_p) -{\bfh}_k\trans{\bfh}_k,
\end{align*}
which defines diagonal matrix $\bfH(\regparam_p)$ and ${\bfh}_k=\trans{\bfDelta}\bm{\bfc}_k = \trans{\bfDelta}\bfC^*\bfe_k$. This is a rank one update for the matrix $\bfH(\regparam_p)$ and  the Sherman-Morrison formula \cite[Equation 2.1.5]{GolubVanLoan2013} gives 
\begin{align*}
    \inv{(\bfH(\regparam_p) -{\bfh}_k\trans{\bfh}_k)} = \inv{\bfH(\regparam_p)} +\frac{1}{\beta_k} \inv{\bfH(\regparam_p)} {\bfh}_k\trans{\bfh}_k \inv{\bfH(\regparam_p)},
\end{align*}
where $\beta_k=1- \trans{\bfh}_k \inv{\bfH(\regparam_p)} {\bfh}_k = \mu^{(p)}$ is independent of $k$ as already shown in \Cref{lem:diag}. Now $(\bfG\yWin^{(k)})_k$ is a sum over the $p$ windows of terms that involve 
\begin{align*}
\trans{\bfe}_k\bfC\bfDelta\inv{(\bfH(\regparam_p) -{\bfh}_k\trans{\bfh}_k)} &= \trans{\bfh}_k  \left(\inv{\bfH(\regparam_p)} +\frac{1}{\mu^{(p)}} \inv{\bfH(\regparam_p)} {\bfh}_k\trans{\bfh}_k \inv{\bfH(\regparam_p)}\right)\\
&=   \left(1 +\frac{1}{\mu^{(p)}} \trans{\bfh}_k\inv{\bfH(\regparam_p)} {\bfh}_k\right)\trans{\bfh}_k \inv{\bfH(\regparam_p)} =\frac{\trans{\bfh}_k \inv{\bfH(\regparam_p)}}{\mu^{(p)}} \\
&=\frac{\trans{\bfh}_k}{\mu^{(p)}}\inv{(\trans{\bfDelta}\bfDelta + \regparam_p^2 \trans{\bfLambda}\bfLambda)}.
\end{align*}
Therefore, 
\begin{align*}
    (\bfG\yWin^{(k)})_k &= \trans{\bfe}_k \sum_{p=1}^P \frac{1}{\mu^{(p)}}\bfC \bfDelta\inv{(\trans{\bfDelta}\bfDelta + \regparam_p^2 \trans{\bfLambda}\bfLambda)} \bfW^{(p)}\trans{\bfDelta} \bfC^* \bfE_k\bfC{\hat{\dVec}},\\
    &=\trans{\bfe}_k \bfC\left(\sum_{p=1}^P \frac{1}{\mu^{(p)}}
    \left[\begin{array}{cc}\bfPhi_{\mathrm{win}^{(p)}}(\regparam_p) & 0 \\0 & 0\end{array}\right]
      - \frac{1-\nu^{(p)}}{\mu^{(p)}}\bfeye_m\right)\hat{\dVec},
\end{align*}
and 
\begin{align*}
 (\bfC\hat{\dVec})_k - (\bfG\yWin^{(k)})_k = \trans{\bfe}_k \bfC \left(\bfeye_m-  \left(\sum_{p=1}^P \frac{1}{\mu^{(p)}}
    \left[\begin{array}{cc}\bfPhi_{\mathrm{win}^{(p)}}(\regparam_p) & 0 \\0 & 0\end{array}\right]
     + \frac{1-\nu^{(p)}}{\mu^{(p)}}\bfeye_m\right)\right)\hat{\dVec}.
\end{align*}
Finally, 
\begin{align*}
    F(\regparamVec) &= \frac{1}{m}\left\| \left(\bfeye_m-  \left(\sum_{p=1}^P \frac{1}{\mu^{(p)}}
    \left[\begin{array}{cc}\bfPhi_{\mathrm{win}^{(p)}}(\regparam_p) & 0 \\0 & 0\end{array}\right]
     + \frac{1-\nu^{(p)}}{\mu^{(p)}}\bfeye_m\right)\right)\hat{\dVec}\right\|^2 \\
     &=\frac{1}{m} \left(\sum_{j=q^*+1}^m \left(1 + \left(\sum_{p=1}^P\frac{1-\nu^{(p)}}{\mu^{(p)}}\right)\right)^2\hat{d}_j^2 + \right.\\
     & \left.\sum_{j=1}^{q^*}\left( 1 +\left(\sum_{p=1}^P\frac{1-\nu^{(p)}}{\mu^{(p)}}\right) -\left(\sum_{p=1}^P \frac{1}{\mu^{(p)}}
     \frac{\gamma_j^2w_j^{(p)}}{\gamma_j^2+\regparam_p^2}\right) \right)^2\hat{d}_j^2  \right).
\end{align*}
\end{proof}
We note that this result is consistent with the derivation using the standard form when $q^*=q<n$ \cite{Byrne} and with the result in \cite[Theorem 3.2]{ChEaOl:11}, when applied for the  standard Tikhonov regularization.

\section{Algebraic Results}\label{app:algebra}
Derivations for the terms needed to evaluate the regularization functions are provided. It is immediate that the common feature of the standard methods \cref{eq:UPRE,eq:GCV}, and their extensions,  is the need to evaluate $\|\rVec \|^2_2$ and $\trace(\A)$. We present the results for the windowed formulations, first focusing on the residual and then the trace, and in all cases assuming $m\ge n$. The results rely on the structure of  the filter matrix $\bfPhi_{\mathtt{win}}$. First we introduce 
$\bfPsi=\bfeye_n-\bfPhi$, with entries
\begin{align}\label{eq:Psientries}
    \Psi_{jj} =  \begin{cases} 1 & j=1:\ell \quad  (\delta_j=0)\\
    \frac{\alpha^2 \lambda_j^2}{\delta_j^2+\alpha^2 \lambda_j^2} & j=\ell+1:q^* \\
    0 & j=q^*:n \quad  (\lambda_j=0).
    \end{cases}
\end{align}
\begin{lemma}[Components of windowed data resolution matrix\label{lem:datacomp}]
\begin{align}\label{eq:datares}
   \bfeye_m - \bfA \bfA^\sharp_{\mathtt{win}}    &=  \bfU \left(  \left[ \begin{array}{cc}  \sum_{p=1}^P \bfW^{(p)} \Psi(\regparam_p) & 0 \\ 0 & \bfeye_{m-n}\end{array} \right]  
    \right) \trans{\bfU}.
\end{align}
\end{lemma}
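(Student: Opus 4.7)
The plan is to substitute the GSVD of $\bfA$ from \Cref{def.GSVD} together with the windowed generalized inverse formula \cref{eq:wingeninv} directly into the product $\bfA \bfA^\sharp_{\mathtt{win}}$ and then simplify at the spectral level. Using $\bfA = \bfU\bfDelta\trans{\bfX}$ together with $\trans{\bfX}\bfY = \bfeye_n$ (since $\bfY$ is defined as the inverse of $\trans{\bfX}$), the inner factor collapses and
\[
\bfA \bfA^\sharp_{\mathtt{win}} = \bfU\, \bfDelta\bfPhi_{\mathtt{win}}\pinv{\bfDelta}\, \trans{\bfU},
\]
so the problem reduces to evaluating the $m\times m$ matrix $\bfDelta\bfPhi_{\mathtt{win}}\pinv{\bfDelta}$ entirely in the spectral domain.

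Next I would exploit the standing assumption $m\ge n$ together with the convention $\delta_j=\Delta_{jj}$ stated after \cref{eq:Phientries}, so that $\bfDelta$ takes the block form $\left[\begin{smallmatrix}\bfDelta_n \\ \mathbf{0}\end{smallmatrix}\right]$ with diagonal $n\times n$ leading block $\bfDelta_n$, and correspondingly $\pinv{\bfDelta} = [\pinv{\bfDelta_n},\,\mathbf{0}]$. Hence
\[
\bfDelta\bfPhi_{\mathtt{win}}\pinv{\bfDelta} = \begin{bmatrix} \bfDelta_n\bfPhi_{\mathtt{win}}\pinv{\bfDelta_n} & 0 \\ 0 & 0 \end{bmatrix},
\]
which already accounts for the $\bfeye_{m-n}$ block appearing in the bottom-right corner of $\bfeye_m - \bfA\bfA^\sharp_{\mathtt{win}}$. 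Because $\bfW^{(p)}$ and $\bfPhi(\regparam_p)$ are diagonal, the symmetric weighting in \cref{eq:winfilter} collapses via $(\bfW^{(p)})^{1/2}\bfPhi(\regparam_p)(\bfW^{(p)})^{1/2} = \bfW^{(p)}\bfPhi(\regparam_p)$, so $\bfPhi_{\mathtt{win}}$ is itself diagonal with entries $\sum_p w_j^{(p)}\Phi_{jj}(\regparam_p)$. At each $j$ with $\delta_j\ne 0$ one has $\delta_j\delta_j^{\dagger}=1$, while at the null indices $j\le \ell$ the formula \cref{eq:Phientries} gives $\Phi_{jj}(\regparam_p)=0$, so the contribution vanishes consistently. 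Therefore $\bfDelta_n\bfPhi_{\mathtt{win}}\pinv{\bfDelta_n} = \bfPhi_{\mathtt{win}}$.

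Finally, writing $\bfeye_m = \bfU \bfeye_m \trans{\bfU}$ and subtracting, the top-left block becomes $\bfeye_n - \bfPhi_{\mathtt{win}}$. Invoking the partition-of-unity identity \cref{eq:Weights}, which yields $\sum_p \bfW^{(p)} = \bfeye_n$, I obtain
\[
\bfeye_n - \bfPhi_{\mathtt{win}} = \sum_{p=1}^P \bfW^{(p)}\bigl(\bfeye_n - \bfPhi(\regparam_p)\bigr) = \sum_{p=1}^P \bfW^{(p)}\bfPsi(\regparam_p),
\]
which is exactly the upper-left block displayed in the lemma. The whole computation is essentially diagonal bookkeeping; the only mild obstacle I anticipate is confirming that the pseudoinverse of the null portion of $\bfDelta$ interacts consistently with $\bfPhi_{\mathtt{win}}$, but this is resolved immediately by \cref{eq:Phientries} and requires no separate case analysis on $\ell$ beyond the initial block decomposition.
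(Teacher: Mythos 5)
Your proposal is correct and follows essentially the same route as the paper's proof: both substitute the GSVD into $\bfA\bfA^\sharp_{\mathtt{win}}$ to obtain a $\bfU(\cdot)\trans{\bfU}$ form, reduce the computation to the diagonal entries (with the $\delta_j=0$ indices handled by $\Phi_{jj}=0$), and use the partition of unity $\sum_p w_j^{(p)}=1$ to rewrite $1-\sum_p w_j^{(p)}\Phi_{jj}(\regparam_p)$ as $\sum_p w_j^{(p)}\Psi_{jj}(\regparam_p)$. The only cosmetic difference is that you cancel $\trans{\bfX}\bfY$ explicitly from \cref{eq:wingeninv}, whereas the paper invokes \cref{eq:AwinAwnsharp} directly.
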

\begin{proof}
Using the GSVD  and \cref{eq:AwinAwnsharp} 
\begin{align}
  \bfeye_m - \bfA \bfA^\sharp_{\mathtt{win}} & =  \bfeye_m- \bfU\left( \sum_{p=1}^P \bfW^{(p)} \Delta  \inv{(\trans{\Delta}\Delta +\regparam_p^2 \trans{\Lambda}\Lambda )}  \trans{\Delta} \nonumber
    \right) \trans{\bfU} \\
    &= \bfeye_m - \bfU \left(  \left[ \begin{array}{cc} \sum_{p=1}^P \Phi_{\mathrm{win}^{(p)}} & 0 \\ 0 & 0\end{array} \right]  
    \right)\trans{\bfU}. \label{eq:lemdatacomp2}
\end{align}
But now examining the components we have 
\begin{align}\label{eq:phientriesindr}
    1- \sum_{p=1}^{P}  w_j^{(p)} \frac{\delta_j^2} {\delta_j^2+\alpha_p^2\lambda_j^2}  = \begin{cases}  1 & j=1:\ell \\
      \sum_{p=1}^{P} w_j^{(p)}\frac{\regparam_p^2 \lambda_j^2}{\delta_j^2+\regparam_p^2 \lambda_j^2}& j=\ell+1:q^*\\
    0 & j=q^*:n. \end{cases}
\end{align}
Therefore, the weighted filter matrix in \cref{eq:datares} follows by comparing \cref{eq:phientriesindr} with \cref{eq:Psientries} and by noting in addition that the entries are $1$ for $j=n+1:m$.  
\end{proof}
\Cref{lem:datacomp} also applies for the complex spectral decomposition replacing $\trans{\bfU}$ by $\bfU^*$ for   unitary $\bfU$ and with entries $\delta_j^2$ and $\lambda_j^2$ replaced by $|\delta_j|^2$ and $|\lambda_j|^2$, respectively. The next results follow immediately from \Cref{lem:datacomp} and are given with minimal verification or without proof. 
\begin{lemma}[Norm of windowed residual\label{lem:normres}]
For the windowed residual   given by $\rWin(\regparamVec) = \bfA \xWin(\regparamVec) - \dVec$,  
\begin{align*}
    \|\rWin(\regparamVec)\|_2^2  &= \sum_{j=1}^{q^*} \left(\sum_{p=1}^Pw_j^{(p)}\Psi_{jj}(\regparam_p)  \hat{d}_j\right)^2+ \sum_{j=n+1}^m \hat{d}_j^2. 
\end{align*}
For non-overlapping windows with $w_j^{(p)}=1$ for  $j\in \mathtt{win}^{(p)}$ \begin{align*}
    \|\rWin(\regparamVec)\|_2^2  & = \sum_{p=1}^P \sum_{j\in\mathtt{win}^{(p)} } \left(\Psi_{jj}(\regparam_p)  \hat{d}_j\right)^2 + \sum_{j=n+1}^m \hat{d}_j^2 = \sum_{p=1}^P \|\hat{\rVec}_{\mathtt{win}}^{(p)}(\regparam_p)\|_2^2 + \sum_{j=n+1}^m \hat{d}_j^2,
\end{align*}
using the definition for the windowed residual based on the spectral domain given by 
$\hat{\rVec}_{\mathtt{win}}^{(p)}(\regparam_p) = \trans{\bfU}\bfA \xVec^{(p)}(\regparam_p) - \bfW^{(p)} \trans{\bfU} \dVec$ .  
\end{lemma}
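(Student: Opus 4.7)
The plan is to derive both displayed identities directly from the block form of the data resolution matrix supplied by the preceding lemma on components of the windowed data resolution matrix, together with the orthogonality of $\bfU$ from the GSVD. I would begin by writing
$$\rWin(\regparamVec) = \bfA\xWin(\regparamVec) - \dVec = -(\bfeye_m - \bfA\bfA^\sharp_{\mathtt{win}})\dVec,$$
so that $\|\rWin\|_2^2 = \|\trans{\bfU}(\bfeye_m - \bfA\bfA^\sharp_{\mathtt{win}})\dVec\|_2^2$. Substituting the block expression for $\bfeye_m - \bfA\bfA^\sharp_{\mathtt{win}}$ and setting $\hat\dVec = \trans{\bfU}\dVec$, the $j$th component of the transformed residual equals $-\bigl(\sum_{p=1}^P w_j^{(p)}\Psi_{jj}(\regparam_p)\bigr)\hat{d}_j$ for $j = 1{:}n$ and $-\hat{d}_j$ for $j = n{+}1{:}m$. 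Taking the squared norm and observing that $\Psi_{jj}(\regparam_p) = 0$ for $j > q^*$ (because $\lambda_j = 0$ there), the top sum truncates to $j = 1{:}q^*$, yielding the first stated identity.

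For the non-overlapping case, I would exploit that $w_j^{(p)} \in \{0,1\}$ with exactly one nonzero $p$ per index $j$, so the cross-terms in squaring $\sum_p w_j^{(p)}\Psi_{jj}(\regparam_p)$ vanish and
$$\Bigl(\sum_{p=1}^P w_j^{(p)}\Psi_{jj}(\regparam_p)\Bigr)^2 = \sum_{p=1}^P w_j^{(p)}\Psi_{jj}(\regparam_p)^2.$$
Swapping the order of the sums over $j$ and $p$, the inner $j$-sum restricts to the disjoint index set $\mathtt{win}^{(p)}$, giving the first form of the non-overlapping identity. To match this with $\sum_p \|\hat\rVec^{(p)}_{\mathtt{win}}(\regparam_p)\|_2^2$, I would substitute the definition $\xVec^{(p)}(\regparam_p) = \bfY\bfPhi(\regparam_p)\pinv{\bfDelta}\bfW^{(p)}\hat\dVec$ from \cref{eq:sepxp} into $\hat\rVec^{(p)}_{\mathtt{win}}(\regparam_p) = \trans{\bfU}\bfA\xVec^{(p)}(\regparam_p) - \bfW^{(p)}\trans{\bfU}\dVec$. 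After using $\trans{\bfX}\bfY = \bfeye_n$, this reduces to
$$\hat\rVec^{(p)}_{\mathtt{win}}(\regparam_p) = (\bfDelta\bfPhi(\regparam_p)\pinv{\bfDelta} - \bfeye_m)\bfW^{(p)}\hat\dVec,$$
whose $j$th component equals $-\Psi_{jj}(\regparam_p)w_j^{(p)}\hat{d}_j$ for $j \le n$ and $0$ for $j > n$ (since $\bfW^{(p)}$ annihilates those components). Squaring and summing recovers $\sum_{j \in \mathtt{win}^{(p)}}(\Psi_{jj}(\regparam_p)\hat{d}_j)^2$, which completes the identification.

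The main obstacle I anticipate is not a conceptual one but rather careful bookkeeping across the index boundaries $\ell$, $q^*$, and $n$: the regions where $\Psi_{jj} = 1$ trivially (for $j \le \ell$), where $\Psi_{jj}$ is the genuine filter value (for $\ell < j \le q^*$), where $\Psi_{jj} = 0$ (for $q^* < j \le n$), and where no filtering applies at all (for $j > n$) must each be handled consistently. A related subtlety is making the implicit zero-padding of the $n \times n$ window matrix $\bfW^{(p)}$ to dimension $m$ explicit when combining it with the $m$-vector $\hat\dVec$; once this convention is fixed, the two identities follow by a direct coordinate-wise calculation.
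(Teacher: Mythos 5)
Your proposal is correct and follows essentially the same route as the paper: the general identity is read off componentwise from the block form of $\bfeye_m-\bfA\bfA^\sharp_{\mathtt{win}}$ in \Cref{lem:datacomp} together with the orthogonal invariance under $\trans{\bfU}$, and the non-overlapping case is obtained by the same substitution of \cref{eq:sepxp} into the spectral residual, reducing it to $(\bfDelta\bfPhi(\regparam_p)\pinv{\bfDelta}-\bfeye_m)\bfW^{(p)}\hat{\dVec}$. Your explicit attention to the index boundaries and the zero-padding of $\bfW^{(p)}$ merely spells out what the paper treats as immediate.
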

\begin{proof}
The general result is immediate. For the non-overlapping case using the definition for the spectral residual with the definition for the windowed solution we have 
\begin{align}
\|\hat{\rVec}_{\mathtt{win}}^{(p)}(\regparam_p) \|^2&= \|\trans{\bfU}\bfA \xVec^{(p)}(\regparam_p) - \bfW^{(p)} \trans{\bfU} \dVec\|_2^2 \\
&= \| (\bfDelta \bfPhi(\regparam_p) \pinv{\bfDelta}- \bfeye_m ) \bfW^{(p)} \hat{\dVec} \|^2 = \sum_{j\in\mathtt{win}^{(p)} }\left(\Psi_{jj}(\regparam_p)  \hat{d}_j\right)^2, 
\end{align} 
and the result holds. 
\end{proof}
\begin{lemma}[Windowed Trace\label{lem:tracewin}]
\begin{align*}
    \trace( \bfA \bfA^\sharp_{\mathtt{win}}) &=  (n-q^*)+ \sum_{j=\ell+1}^{q^*} \sum_{p=1}^P w_j^{(p)} \Phi_{jj}(\regparam_p).
    \end{align*}
    For non-overlapping windows
    \begin{align*}
        \trace( \bfA \bfA^\sharp_{\mathtt{win}})&= (n-q^*)+\sum_{p=1}^P\sum_{j\in\mathtt{win}^{(p)} } \Phi_{jj}(\regparam_p). 
\end{align*}
\end{lemma}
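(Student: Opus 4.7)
The plan is to deduce the trace formula directly from the preceding Lemma (Components of windowed data resolution matrix), which already gives an explicit spectral decomposition of $\bfeye_m - \bfA\bfA^\sharp_{\mathtt{win}}$. Since $\bfU$ is orthogonal and trace is invariant under orthogonal similarity, taking the trace of both sides of \cref{eq:datares} yields
\begin{align*}
    \trace(\bfeye_m - \bfA\bfA^\sharp_{\mathtt{win}}) = (m-n) + \trace\!\Big(\sum_{p=1}^P \bfW^{(p)}\bfPsi(\regparam_p)\Big) = (m-n) + \sum_{j=1}^{n}\sum_{p=1}^P w_j^{(p)}\Psi_{jj}(\regparam_p),
\end{align*}
so $\trace(\bfA\bfA^\sharp_{\mathtt{win}}) = n - \sum_{j=1}^{n}\sum_{p=1}^P w_j^{(p)}\Psi_{jj}(\regparam_p)$. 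This reduces the problem to evaluating the weighted diagonal sum of $\bfPsi$.

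Next, I would split the inner sum using the three regimes in \cref{eq:Psientries}. For $j=1{:}\ell$ we have $\Psi_{jj}(\regparam_p)=1$, so the partition-of-unity condition $\sum_p w_j^{(p)}=1$ gives an individual contribution of $1$, totaling $\ell$. For $j=q^*+1{:}n$ we have $\Psi_{jj}(\regparam_p)=0$, contributing nothing. For the middle range $j=\ell+1{:}q^*$ I would use the identity $\Psi_{jj}(\regparam_p)=1-\Phi_{jj}(\regparam_p)$ together with $\sum_p w_j^{(p)}=1$ to write $\sum_p w_j^{(p)}\Psi_{jj}(\regparam_p)=1-\sum_p w_j^{(p)}\Phi_{jj}(\regparam_p)$, whose sum over $j=\ell+1{:}q^*$ contributes $(q^*-\ell) - \sum_{j=\ell+1}^{q^*}\sum_p w_j^{(p)}\Phi_{jj}(\regparam_p)$. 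Combining,
\begin{align*}
\trace(\bfA\bfA^\sharp_{\mathtt{win}}) = n - \ell - (q^*-\ell) + \sum_{j=\ell+1}^{q^*}\sum_{p=1}^P w_j^{(p)}\Phi_{jj}(\regparam_p) = (n-q^*) + \sum_{j=\ell+1}^{q^*}\sum_{p=1}^P w_j^{(p)}\Phi_{jj}(\regparam_p),
\end{align*}
which is the first claim.

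For the non-overlapping case the condition \cref{eq:Non-overlapping window condition} means exactly one $p$ satisfies $w_j^{(p)}=1$ for each $j$, so the double sum factors as $\sum_{p=1}^P\sum_{j\in\mathtt{win}^{(p)}\cap\{\ell+1,\ldots,q^*\}}\Phi_{jj}(\regparam_p)$. I would then observe that extending the inner index range to all of $\mathtt{win}^{(p)}$ is harmless: indices $j\le\ell$ have $\delta_j=\gamma_j=0$ and thus $\Phi_{jj}(\regparam_p)=0$ by \cref{eq:Phientries}, while indices $j>q^*$ fall outside the windows under the standing assumption that the partitions are placed over the generalized singular values. This yields the stated simplification $\sum_{p=1}^P\sum_{j\in\mathtt{win}^{(p)}}\Phi_{jj}(\regparam_p)$.

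I do not anticipate any substantive obstacle: the Components lemma has already done the hard work of identifying the spectral structure of the windowed data resolution matrix, so the argument is essentially bookkeeping on the three regimes of $\Psi_{jj}$ and the partition-of-unity property of the weights. The only point requiring care is the index range convention in the non-overlapping case, which is handled by observing that the omitted indices are exactly those where $\Phi_{jj}$ vanishes or is excluded from the window support.
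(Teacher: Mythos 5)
Your proposal is correct and follows the route the paper intends: the paper states that this lemma ``follows immediately from'' the Components lemma (\Cref{lem:datacomp}) and omits the proof, and your argument is exactly that derivation (trace invariance under the orthogonal similarity, then bookkeeping over the three regimes of $\Psi_{jj}$ using the partition-of-unity property of the weights). The only remark is that one can reach the first formula one step more directly by tracing \cref{eq:lemdatacomp2} for $\bfA\bfA^\sharp_{\mathtt{win}}$ itself, using $\Phi_{jj}=1$ for $j>q^*$, rather than passing through $\bfeye_m-\bfA\bfA^\sharp_{\mathtt{win}}$ and $\Psi$; both give the same result.
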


\section{The GSVD and the  \texorpdfstring{$\mathtt{DCT}$}{DCT}\label{app:DCT}}
We outline the proof of \Cref{thm:Diag to GSVD}. \inserted{
\begin{proof}
We begin by setting $\trans{\bfC}\widetilde{\bfDelta}{\bfC} = \bfU\bfDelta\trans{\bfZ}$ and rearranging terms to obtain
$\bfDelta = \trans{\bfU}\trans{\bfC}\widetilde{\bfDelta}{\bfC}\invTrans{\bfZ}$. Doing the same for $\widetilde{\bfLambda}$,  and using $\trans{\bfDelta}\bfDelta + \trans{\bfLambda}\bfLambda = \bfeye_n$, we have 
\begin{align*}
	\inv{\bfZ}\trans{\bfC}\trans{\widetilde{\bfDelta}}\widetilde{\bfDelta}{\bfC}\invTrans{Z} + \inv{\bfZ}\trans{\bfC}\trans{\widetilde{\bfLambda}}\widetilde{\bfLambda}{\bfC}\invTrans{Z} &= \bfeye_n, 
	\end{align*}
from which we have 
\begin{align*} 
	\trans{\widetilde{\bfDelta}}\widetilde{\bfDelta} + \trans{\widetilde{\bfLambda}}\widetilde{\bfLambda} &= \bfC\bfZ\trans{\bfZ}\trans{\bfC}. \label{eq:Diag to GSVD}
\end{align*}
Since $\nullspace(A) ~\cap~ \nullspace(L) = \{\zeroVec\}$, the matrix on the left is diagonal with positive entries. Thus, we can form the (positive) square root $S = \sqrt{\trans{\widetilde{\bfDelta}}\widetilde{\bfDelta} + \trans{\widetilde{\bfLambda}}\widetilde{\bfLambda}}$ which is also diagonal with positive entries. Using $\bfS$ we can write $\bfS^2 = \bfS\trans{\bfS} = \bfC\bfZ\trans{\bfZ}\trans{\bfC}$, which implies that we can set $\bfZ = \trans{\bfC}\bfS$; $\bfS$ is invertible but not necessarily orthogonal, so $\bfZ$ is only invertible. Using the transpose $\trans{\bfZ} = \bfS\bfC$ and inverse $\inv{\bfS}$, we can then write
\begin{align*}
	\bfA &= \trans{\bfC}\widetilde{\bfDelta}\inv{\bfS}\bfS\bfC = \trans{\bfC}\widetilde{\bfDelta}\inv{\bfS}\trans{\bfZ} \text{ and }
	\bfL =  \trans{\bfC}\widetilde{\bfLambda}\inv{\bfS}\bfS\bfC = \trans{\bfC}\widetilde{\bfLambda}\inv{\bfS}\trans{\bfZ}.
\end{align*}
The last step is to reorder the elements of $\widetilde{\bfDelta}\inv{\bfS}$ and $\widetilde{\bfLambda}\inv{\bfS}$; fortunately they have the opposite order regardless of the order of the elements of $\widetilde{\bfDelta}$ and $\widetilde{\bfLambda}$. Therefore we can use a permutation matrix $\bfP$ so that $\bfDelta = \trans{\bfP}\widetilde{\bfDelta}\inv{\bfS}{\bfP}$ and $\bfLambda = \trans{\bfP}\widetilde{\bfLambda}\inv{\bfS}{\bfP}$ have the desired ordering. Since permutation matrices are orthogonal, we finally obtain the GSVD:
\begin{align*}
	\bfA &= \trans{\bfC}\widetilde{\bfDelta}\inv{\bfS}\trans{\bfZ} = \trans{\bfC}\bfP\trans{\bfP}\widetilde{\bfDelta}\inv{\bfS}{\bfP}\trans{\bfP}\trans{\bfZ} = \bfU\bfDelta\trans{\bfX}, \\
	\bfL &= \trans{\bfC}\widetilde{\bfLambda}\inv{\bfS}\trans{\bfZ} = \trans{\bfC}\bfP\trans{\bfP}\widetilde{\bfLambda}\inv{\bfS}{\bfP}\trans{\bfP}\trans{\bfZ} = \bfU\bfLambda\trans{\bfX},
\end{align*}
where $\bfU = \trans{\bfC}\bfP$ is orthogonal and $\bfX = \bfZ\bfP$ is invertible. Now it follows that  $\trans{\bfDelta}\bfDelta + \trans{\bfLambda}\bfLambda = \bfeye_n$ because 
\begin{align*}
	\trans{\bfDelta}\bfDelta + \trans{\bfLambda}\bfLambda &= \trans{\bfP}\invTrans{\bfS}\trans{\widetilde{\bfDelta}}\widetilde{\bfDelta}\inv{\bfS}\bfP + \trans{\bfP}\invTrans{\bfS}\trans{\widetilde{\bfLambda}}\widetilde{\bfLambda}\inv{\bfS}\bfP \\
	&= \trans{\bfP}\invTrans{\bfS}\left(\trans{\widetilde{\bfDelta}}\widetilde{\bfDelta} + \trans{\widetilde{\bfLambda}}\widetilde{\bfLambda}\right)\inv{\bfS}\bfP \\
	&= \trans{\bfP}\inv{\left(\bfS^2\right)}\left(\trans{\widetilde{\bfDelta}}\widetilde{\bfDelta} + \trans{\widetilde{\bfLambda}}\widetilde{\bfLambda}\right)\bfP \\
	&= \trans{\bfP}\inv{\left(\trans{\widetilde{\bfDelta}}\widetilde{\bfDelta} + \trans{\widetilde{\bfLambda}}\widetilde{\bfLambda}\right)}\left(\trans{\widetilde{\bfDelta}}\widetilde{\bfDelta} + \trans{\widetilde{\bfLambda}}\widetilde{\bfLambda}\right)\bfP = \bfeye_n.
\end{align*}
\end{proof}
}

The condition  $\nullspace(\bfA) ~\cap~ \nullspace(\bfL) = \{\zeroVec\}$ in  \Cref{thm:Diag to GSVD} is necessary for the property $\trans{\bfDelta}\bfDelta + \trans{\bfLambda}\bfLambda = \bfeye_n$.  \edited{While this identity is not used explicitly in any of the derivations used for finding the $\mathtt{UPRE}$ and $\mathtt{GCV}$ functions, it is standard that the mutual GSVD  decomposition is obtained with this condition imposed when it is calculated using the CS decomposition, and it is therefore relevant to show that the condition still holds for the given $\mathtt{DCT}$ case \cite{GoLo:96,ABT}.} 
As an alternative, we could require that $\trans{\bfDelta}\bfDelta + \trans{\bfLambda}\bfLambda = \widetilde{\bfeye}_n$ where $\widetilde{\bfeye}_n$ is a modified identity matrix that has some zero diagonal elements. This generalization, as well as a conversion involving Kronecker products, is described  in \cite[Chapter 2]{Byrne}. Furthermore, equipped with \Cref{thm:Diag to GSVD}, it is clear that the simplifications for the parameter selection methods using the GSVD can be rewritten in terms of the $\mathtt{DCT}$ simultaneous decomposition, which is particularly relevant for the efficient solution of two-dimensional problems. A similar approach can be used to convert the Kronecker product $\mathtt{DCT}$ for $2$D problems to a GSVD. 

\end{document}